\tikzstyle vertex=[circle, draw, fill=black, inner sep=0mm, minimum width=1.5mm]
\tikzstyle fakevertex=[circle, draw, fill=none, inner sep=0mm, minimum width=1.5mm]
\tikzstyle smallvertex=[circle, draw, fill=black, inner sep=0mm, minimum width=0.9mm]
\tikzstyle directed=[postaction={decorate,decoration={markings, mark=at position .65 with {\arrow{Straight Barb}}}}]
\tikzstyle reverse directed=[postaction={decorate,decoration={markings, mark=at position .65 with {\arrowreversed{Straight Barb};}}}]
\tikzstyle arc=[-{>[length=2mm, width=2mm]}]
\tikzstyle colormap=[dotted,semithick,-{>[length=1.5mm,width=1.5mm]}]
\crefname{claim}{Claim}{Claims}
\Crefname{figure}{Figure}{Figures}
\crefname{section}{section}{sections}
\newtheorem{theorem}{Theorem}[section]
\newtheorem{proposition}[theorem]{Proposition}
\newtheorem{lemma}[theorem]{Lemma}
\theoremstyle{definition}
\newtheorem{definition}[theorem]{Definition}
\theoremstyle{remark}
\newtheorem{remark}[theorem]{Remark}
\newtheorem{claim}[theorem]{Claim}
\newenvironment{claimproof}[1][Proof.]{\begin{proof}[#1]}{\end{proof}}
\newcommand{\card}[1]{|{#1}|}
\newcommand{\Nn}{\mathbb{N}}
\newcommand{\C}{\mathcal{C}}
\newcommand{\Zz}{\mathbb{Z}}
\newcommand{\Cc}{\mathcal{C}}
\newcommand{\Pc}{\mathcal{P}}
\newcommand{\Tc}{\mathcal{T}}
\newcommand{\eqdef}{\coloneqq}
\newcommand{\setst}[2]{\textstyle \left\{#1 \ : \ #2 \right\}}
\newcommand{\qgt}{\succ}
\newcommand{\qle}{\preceq}
\newcommand{\qge}{\succeq}
\newcommand{\from}{\leftarrow}
\newcommand{\ie}{i.e.\@\xspace}
\renewcommand{\le}{\leqslant}
\renewcommand{\ge}{\geqslant}
\renewcommand{\emptyset}{\varnothing}
\renewcommand{\phi}{\varphi}
\DeclareMathOperator{\rk}{\sf rk}
\DeclareMathOperator{\nlcw}{\sf nlcw}
\DeclareMathOperator{\rw}{\sf rw}
\DeclareMathOperator{\lnlcw}{\sf lnlcw}
\DeclareMathOperator{\lrw}{\sf lrw}
\DeclareMathOperator{\type}{\sf type}
\DeclareMathOperator{\idx}{\sf idx}
\DeclareMathOperator{\arity}{\sf ar}
\newcommand{\fo}{\textsf{FO}\xspace}
\newcommand{\mso}{\textsf{MSO}\xspace}
\newcommand{\emso}{\textsf{EMSO}\xspace}
\newcommand{\cmso}{\textsf{CMSO}\xspace}
\newcommand{\colinit}{\lambda_{init}}
\newcommand{\colfin}{\lambda_{fin}}
\newcommand{\edgeint}{E^{\text{int}}}
\newcommand{\edgebnd}{E^{\partial}}
\newcommand{\abstr}[1]{\llbracket #1 \rrbracket}
\title{Transducing Linear Decompositions of Tournaments}
\author{Colin Geniet}
\author{Fatemeh Ghasemi}
\author{Mamadou Moustapha Kanté}
\address{Institute for Basic Science (IBS), Discrete Mathematics Group, Daejeon, South Korea}
\email{research@colingeniet.com}
\address{LACL, Université Paris-Est Créteil, France}
\email{fatemeh.ghasemi@lacl.fr}
\address{Université Clermont Auvergne, Clermont Auvergne INP, LIMOS, CNRS, Clermont-Ferrand, France}
\email{mamadou.kante@uca.fr}
\thanks{C.~Geniet was supported by the Institute for Basic Science (\nolinkurl{IBS-R029-C1}).
  F.~Ghasemi is supported by the ANR project DIFFERENCE (\href{https://anr.fr/Projet-ANR-20-CE48-0002}{\nolinkurl{ANR-20-CE48-0002}}) and by IUT Sénart-Fontainebleau.
}
\begin{document}
\begin{abstract}
  Bojańczyk, Pilipczuk, and Grohe [LICS '18] proved that for graphs of bounded linear clique-width,
  clique-decompositions of bounded width can be produced by a \textsf{CMSO} transduction.
  We show that in the case of tournaments, a first-order transduction suffices.
  This implies that the logics \textsf{CMSO} and \emph{existential} \textsf{MSO} are equivalent over bounded linear clique-width tournaments.
\end{abstract}
\maketitle

\section{Introduction}
Bojańczyk and Pilipczuk proved the following conjecture of Courcelle:
\begin{theorem}[\cite{bojanczyk2016definability,BojanczykP17}]
  \label{thm:treewidth}
  For any~$k$, there is a monadic second-order (\mso) transduction~$\Phi$ from graphs to tree-decompositions of graphs such that, on every input graph $G$, the
  following holds:

  \begin{enumerate}
  \item  If $G$ is a graph of tree-width at most $k$, then $\Phi$ non-deterministically outputs a tree-decomposition of $G$ of width at most $k$, and
  \item every output of $\Phi$ is a tree-decomposition of width at most $k$.
  \end{enumerate}
\end{theorem}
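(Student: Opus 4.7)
The plan is to combine the two capabilities of an \mso transduction: nondeterministic coloring of the input by a bounded alphabet (the transduction's \emph{parameters}), and \mso-definable interpretation of the output from the colored input. The transduction will guess a coloring that encodes a canonical tree-decomposition of $G$, and then \mso formulas will extract the decomposition from the colored graph.

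First, I would fix a canonical tree-decomposition $T(G)$ for every graph $G$ of tree-width at most $k$. A natural candidate is the Robertson--Seymour tangle decomposition, or a recursive construction that repeatedly splits the graph along a canonically chosen balanced separator. The essential requirement is that this decomposition be stable under isomorphism and describable by a bounded amount of local information per vertex.

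Second, I would design the coloring so that each vertex of $G$ receives a label in a finite alphabet encoding enough of its position within $T(G)$ to reconstruct the decomposition. The crucial observation is that although the depth of $T(G)$ may grow with $|G|$, the information each vertex needs to carry about its immediate \emph{context} is bounded; \mso can then stitch these local pieces together using its quantification over vertex sets to recover global features such as ancestry in the tree and separator behavior. Third, I would write \mso formulas that, from the colored graph, define the tree nodes (for instance as equivalence classes of vertices under an \mso-definable relation), the tree-edge relation, and the bag-membership relation, along with a verification layer that rejects any coloring not encoding a valid width-$k$ decomposition.

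The main obstacle is the interplay between the first and second steps: the canonical decomposition must be chosen so that a bounded-color encoding exists, \emph{and} the encoding must be decodable by \mso. Tangles (or adhesions, or profiles of bounded order) provide the right combinatorial objects here---they are locally describable and uniquely determine the decomposition---but proving that finitely many colors always suffice, uniformly in $|G|$, requires substantial work at the interface of separator theory and \mso expressiveness. This is the technical heart of the Bojańczyk--Pilipczuk argument, and is precisely the ingredient that the present paper will seek to strengthen (from \mso and clique-width to \fo and linear clique-width) in the setting of tournaments.
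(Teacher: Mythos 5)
This statement is cited, not proved, in the paper; what the paper does give is a two-sentence description of the Bojańczyk--Pilipczuk strategy, and your sketch diverges from it in one important structural respect while also stopping short of an actual proof.

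Your overall template --- nondeterministically guess a bounded coloring encoding a canonical tree-decomposition, then interpret the decomposition by \mso formulas and verify correctness by filtering --- is the right shape for any transduction of this kind, and the appeal to tangle-like canonical objects is also in the right spirit. But you do not engage with the actual mechanism Bojańczyk and Pilipczuk use to make the bounded-color encoding decodable. Their proof is explicitly two-staged: first they handle bounded \emph{path-width} graphs by viewing path decompositions as words over a finite semigroup and invoking Simon's Factorisation Forest Theorem, which gives a bounded-depth recursive factorisation with idempotence properties; only then do they handle general bounded tree-width graphs by constructing a tree decomposition with bags of bounded path-width and using connectivity/path-system arguments to transduce the tree structure, reducing to the path-width case. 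Simon's theorem is the load-bearing combinatorial tool that lets a bounded alphabet carry enough information for \mso to reconstruct a potentially very deep decomposition, and it does not appear in your sketch at all. Instead you write that ``proving that finitely many colors always suffice, uniformly in $|G|$, requires substantial work at the interface of separator theory and \mso expressiveness'' --- which is exactly the step a proof would have to supply, and which your proposal leaves open.

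Concretely, the gap is this: you assert that because each vertex's ``immediate context'' in $T(G)$ is bounded, \mso can ``stitch these local pieces together'' to recover ancestry and separator behaviour in the tree. This is not true in general for \mso on graphs (as opposed to \mso on the tree itself): an arbitrary canonical decomposition of unbounded depth need not be recoverable from a coloring by finitely many colors, and showing that a suitable canonical decomposition \emph{is} so recoverable is precisely the content of the theorem. Without Simon's theorem (or something doing its job --- bounding the factorisation depth uniformly in the semigroup size rather than in $|G|$), the claim that a bounded alphabet suffices is unsupported. A correct account would need to set up the semigroup of ``bags'' for path decompositions, apply Simon's theorem to obtain bounded-depth factorisations, and then handle the passage from paths to trees via the low-congestion path systems that the paper alludes to.
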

This can be understood as saying that tree-decomposition can be constructed
not in a usual algorithmic sense, but through a process described by \mso logic.
The motivation for this result is to prove the converse of Courcelle's theorem:
for graphs of tree-width~$k$, if~$\Pc$ is a property that can be tested by a tree automaton running on tree decompositions of width~$k$, then~$\Pc$ is definable by an \mso formula.

The proof of \cref{thm:treewidth} proceeds in two main steps:
\begin{enumerate}
  \item \label{item:pathwidth} First, one considers bounded \emph{path-width} graphs.
    Path decompositions are seen as words in a semigroup,
    to which can be applied the Factorisation Forest Theorem of Simon~\cite{simon1990factorization}.
    This reduces the problem to path decomposition with a crucial regularity property---idempotence---%
    which notably ensures the existence of paths without excessive detours.
  \item \label{item:treewidth} In the general case, one constructs
    a tree decomposition whose bags have bounded path-width, and are in a sense maximal.
    Careful connectivity considerations, involving path systems with low congestion,
    allow to MSO transduce this decomposition, reducing the problem to the previous path-width case.
\end{enumerate}

A natural generalisation of \cref{thm:treewidth} is to consider dense graphs,
replacing tree-width by clique-width.
The same authors and Grohe generalised step~\eqref{item:pathwidth} to this setting,
replacing path-width by its dense equivalent \emph{linear clique-width},
and using the slightly stronger logic \cmso, i.e.\ \mso with counting modulo~2.
Their proof is once again crucially based on Simon's theorem.
\begin{theorem}[\cite{bojanczyk2018cliquewidth}]\label{thm:linear-cw} For any~$k$, there is a \cmso transduction $\Phi$ from graphs to clique-decompositions, and~$k' \in \Nn$ such that
  for any input graph~$G$,
  \begin{enumerate}
  \item If $G$ has linear clique-width at most $k$, then $\Phi$ non-deterministically outputs a clique-decomposition of $G$ of width at most $k'$,
  \item every output of $\Phi$ is a clique-decomposition of width at most $k'$.
  \end{enumerate}
\end{theorem}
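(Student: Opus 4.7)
My plan is to adapt the semigroup-theoretic strategy used by Bojańczyk--Pilipczuk for path-width to the dense setting, with Simon's Factorisation Forest Theorem as the central tool. A linear clique-decomposition of $G$ of width~$k$ is naturally a word~$w$ of length~$|V(G)|$ over a finite alphabet~$\Sigma_k$: each letter introduces one new vertex, then possibly relabels vertices or adds all edges between two label classes. I would associate to~$w$ a morphism into a finite semigroup~$S = S(k)$ whose elements record the \emph{context type} of each factor $w[i..j]$, that is, the mapping it induces from labeled boundary graphs on its left to labeled boundary graphs on its right, quotiented by an equivalence of bounded index. Concatenation of factors then becomes multiplication in~$S$.

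To this morphism I would apply Simon's Factorisation Forest Theorem, obtaining a factorisation forest~$F$ of depth bounded by a function of $|S|$, in which every internal node is either binary or an \emph{idempotent node} whose children all share the same idempotent value $e = e^2 \in S$. Idempotence is the key regularity property: at an idempotent node, factors can be coarsened, duplicated, or interleaved without changing the overall context type, so the label classes introduced by different children can be merged coherently. A bottom-up reading of~$F$ then produces a clique-decomposition of width bounded by some $k' = k'(k)$.

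The core challenge is to realise this construction as a \cmso transduction operating on the bare graph~$G$, without access to~$w$ or~$F$. I would proceed in two guessing layers. First, the transduction guesses a finite vertex coloring together with auxiliary unary/binary predicates from which an \mso formula can recover the linear order induced by~$w$; this is the standard way to reveal an elimination order on a bounded linear clique-width graph through \mso-definable structure. Second, having access to this order, the transduction guesses the factorisation forest itself, marking each position by its depth in~$F$, its semigroup value, and the position of its parent's idempotent class. The reason \cmso is genuinely needed rather than plain \mso is precisely in handling idempotent nodes with unboundedly many children: certain natural properties of such sequences, such as the parity of the number of children carrying a given sub-type, are not \mso-definable but become expressible with counting modulo~$2$. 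A \cmso formula then verifies locally that the guessed data form a valid factorisation forest, and a routine post-processing outputs the associated clique-decomposition; a filtering formula guarantees that every non-trivial output has width at most $k'$, while the existence of a successful guess whenever $G$ has linear clique-width at most~$k$ follows from applying Simon's theorem to any optimal decomposition word~$w$.
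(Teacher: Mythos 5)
This statement is cited from Bojańczyk, Pilipczuk, and Grohe and is not reproved in the paper, but the paper does outline the original argument and explicitly flags its hard steps. Your high-level skeleton---encode a linear decomposition as a word over a finite alphabet, map it to a finite semigroup of context types, apply Simon's Factorisation Forest Theorem, and read a bounded-width decomposition off the forest---does match the cited proof, and is also the skeleton the paper itself refines in \cref{thm:transduce-ordering}.

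However, your proposal is circular at the decisive step. You write that the transduction first ``guesses a finite vertex coloring together with auxiliary unary/binary predicates from which an \mso{} formula can recover the linear order induced by~$w$,'' and dismiss this as ``the standard way to reveal an elimination order.'' There is no such standard way: this step is precisely the theorem to be proved. A colouring yields only a bounded number of unary predicates; to recover a total order of unbounded length, the transduction must use \mso{}-definable \emph{structural} features of~$G$ that distinguish earlier vertices from later ones. In the original proof this is exactly where all the difficulty lies---the authors develop a dense analogue of connectivity (a well-linkedness notion tied to cut-rank) and use it to define the comparison, much as small separators are used in the tree-width case. The present paper emphasises this: ``The proof of \cref{thm:linear-cw} is already quite involved, requiring to define a dense equivalent of connectivity.'' Your proposal contains none of that machinery, and without it the first guessing layer simply does not produce the order. (By contrast, the paper's own \cref{thm:main} gets this step cheaply for tournaments because \fo{} queries on edges and two-step paths already compare vertices; that trick has no analogue for general graphs.)

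Two further inaccuracies. First, a bottom-up reading of the factorisation forest does not yield a \emph{linear} clique-decomposition: idempotent nodes with unboundedly many children force the output decomposition tree to have nodes of unbounded degree, which is exactly why the cited theorem outputs tree-shaped decompositions while the present paper can promise linear ones for tournaments. Second, your justification for \cmso{}---parity of the number of children carrying a given subtype---is not the reason: once a linear order on positions is available (which your second guessing layer assumes), parity of a set of positions is already \mso{}-definable by the standard alternating-partition trick. The modular counting in the original argument is needed while \emph{defining} the order and handling the unbounded-degree star nodes, not for verifying the factorisation forest afterwards.
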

Pushing this generalisation to graphs of bounded clique-width however remains an open problem:
the second step of the proof of \cref{thm:treewidth} seems strongly tied to the behaviour of paths in tree decompositions,
hence much harder to generalise to dense graphs.
The proof of \cref{thm:linear-cw} is already quite involved,
requiring to define a dense equivalent of connectivity.

We propose considering these questions for a restricted class of graphs,
namely \emph{tournaments}, i.e.\ directed graphs in which between every pair of vertices~$x,y$,
exactly one of the edges~$x \to y$ or~$y \to x$ exists.
In this setting, we strengthen \cref{thm:linear-cw} in two ways:
\begin{itemize}
  \item We only use first-order (\fo) logic, which is much weaker than \mso.
  \item Our transduction always produces \emph{linear} decompositions,
    whereas the output decompositions sometimes need to be tree-like
    in \cref{thm:linear-cw} and step~\eqref{item:pathwidth} of \cref{thm:treewidth}.
\end{itemize}
That is, we prove the following.
\begin{restatable}{theorem}{transducedecomp} \label{thm:main}
  For any~$k$, one can compute an \fo transduction~$\Phi$ from graphs to linear clique-decompositions, and $k' \in \Nn$, such that on any input graph~$T$,
  \begin{enumerate}
    \item if $T$ is a tournament of linear clique-width at most $k$, then $\Phi$ non-deterministically outputs a linear clique-decomposition of~$T$ of width at most $k'$, and
  \item every output of $\Phi$ is a linear clique-decomposition of~$T$ of width at most~$k'$.
  \end{enumerate}
\end{restatable}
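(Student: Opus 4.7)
The plan is to construct the FO transduction in three parts: a non-deterministic \emph{guess} of auxiliary information via vertex colourings (the source of non-determinism in FO transductions), an FO \emph{reconstruction} of a linear order on $V(T)$ from these colours together with the tournament structure, and an FO \emph{output} of the corresponding linear clique-decomposition.

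The structural key I would aim for is the following: every tournament $T$ of linear clique-width at most $k$ admits a linear clique-decomposition of width bounded by some $k' = k'(k)$ whose vertex sequence $(v_1, \dots, v_n)$ can be partitioned into $N = N(k)$ consecutive intervals $I_1, \dots, I_N$, each of which induces a \emph{transitive} sub-tournament. This would be a tournament-specific refinement of the corresponding step in the proof of \cref{thm:linear-cw}: Simon's factorisation forest theorem, applied to the semigroup of label transformations of a linear decomposition, provides an idempotent factorisation of bounded depth; on idempotent blocks the orientations between vertices become highly regular, and I expect a bounded further refinement to yield transitive intervals. Given such a lemma, the transduction guesses a colouring $c : V(T) \to [N]$ assigning each vertex to its interval index and defines
\[
  u < v \iff c(u) < c(v), \text{ or } c(u) = c(v) \text{ and } u \to v.
\]
This is FO-definable, and transitivity of each colour class guarantees that $<$ is a total order. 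Additional guessed colourings encode the label trajectory of each vertex along the decomposition.

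The verification stage then uses FO formulas to check that (i)~each colour class is transitive (no directed $3$-cycle inside a single colour), (ii)~for every pair $u < v$ the orientation of the edge between $u$ and $v$ matches what is prescribed by the guessed labels of $u$ and $v$ at the moment $v$ is added, and (iii)~at most $k'$ distinct labels are active simultaneously during the decomposition. All three conditions translate to FO formulas on $T$ once $<$ has been reconstructed, since the ``label of $u$ at time $v$'' is determined locally from the colouring data and can be read off by a formula of bounded quantifier depth.

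The main obstacle I anticipate is the structural lemma on transitive-interval factorisation. Simon's theorem alone only yields an idempotent factorisation; an idempotent block need not be transitive, and naive attempts to subdivide it can blow up the number of intervals beyond any bound depending on $k$. I expect to need a dedicated argument showing that, inside an idempotent block, the vertices partition into boundedly many type classes with rigidly determined inter-class orientations, and that the remaining within-class reorderings can be absorbed into a further bounded number of intervals, exploiting the fact that in a tournament a bounded-type partition together with fixed inter-class orientations leaves very little room for non-transitive behaviour. A secondary difficulty is guaranteeing that the output decomposition is \emph{linear} rather than tree-shaped, which is not automatic from the Bojańczyk--Pilipczuk--Grohe framework and must be enforced throughout by propagating the linear order.
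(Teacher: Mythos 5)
Your plan hinges on a structural lemma that I do not believe is true: that every tournament of linear clique-width at most $k$ has a bounded-width linear decomposition whose vertex sequence splits into $N(k)$ consecutive \emph{transitive} intervals. Consider the ``inverted path'' $P_n$ on vertices $1,\dots,n$ with $i\to j$ iff $j>i+1$ or $j=i-1$ (Figure~3a of the paper); its natural ordering has cut-rank~$2$. In $P_n$ the directed 3-cycles are exactly the triples $\{i,i+1,i+2\}$, so any transitive interval in the natural ordering has size at most~$2$, forcing $\Omega(n)$ intervals. The rotating tournament $R_n$ (Figure~3c,d) behaves similarly: the interleaved bounded-cut-rank ordering produced there creates a 3-cycle among every three consecutive positions, again forcing $\Omega(n)$ transitive intervals, while the natural circular ordering (which does have two large transitive halves) has cut-rank $\Theta(n)$. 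I do not see a bounded-cut-rank ordering of either family avoiding this, and if the lemma fails the guessed colouring $c:V\to[N]$ does not exist, so the reconstruction of $<$ cannot get off the ground. The idempotent regularity from Simon's theorem does not help you here: idempotence controls how \emph{types} interact across blocks, but says nothing about triangles \emph{within} a type class, and indeed the paper never uses transitivity. Instead it defines a synchronisation graph on vertex types (two types adjacent iff the corresponding vertex sets are non-homogeneous), and when that graph is connected it propagates the bag-ordering across types via \cref{lem:ordering-types-heterogeneous,lem:ordering-types-sync}; when disconnected it reorganises the decomposition by components of that graph (\cref{lem:idempotent-partition}). The invariant that makes the induction close is about the synchronisation graph and the type monoid $M_k$, not about transitive sub-tournaments.

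Two further points are glossed over. First, even if you had a total order $<$ of bounded cut-rank, passing from $<$ to an actual linear clique-decomposition (a word over atomic bags) is not immediate in \fo: the ``label of $u$ at time $v$'' is a product of boundedly many recolouring maps over an \emph{unbounded} interval, and the paper needs \cref{lem:simon-transduction} (Colcombet's split encoding of factorisation forests) and \cref{lem:decomp-to-graph} to guess and verify this word. Second, checking that the order has bounded cut-rank is indeed \fo-expressible (\cref{subsec:check}), but your ``at most $k'$ labels active'' phrasing suggests checking the decomposition rather than the rank, and that again needs the machinery just mentioned. Your high-level architecture (guess colours, reconstruct an order, verify, emit a decomposition) matches the paper's, but the key structural lemma you propose to plug in appears false, and the real work is done by a different, type-based invariant.
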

The proof of \cref{thm:main} is also significantly simpler than \cref{thm:linear-cw}.
The key difference is that we do not refer to any notion of connectivity.
Oversimplifying greatly, in the path-width case,
the relative ordering of vertices in the path decomposition
is determined by queries such as `is there a small separator between~$x$ and~$y$'.
For tournaments, our queries are rather `is~$x \to y$ an edge'
or `is there a vertex~$z$ with edges~$x \to z \to y$'.
The latter, unlike the former, can be expressed using first-order logic.

In a sense, this is possible because in tournaments, the highly regular substructures provided by Simon's factorisation are linear orders (also called transitive tournaments);
by contrast, in the graph case, these highly regular substructures could be paths or edgeless graphs, for which \fo is insufficient to describe a decomposition.
Simon's factorisation theorem remains a key tool in our proof.

\Cref{thm:linear-cw}, together with Courcelle's Backwards Translation Theorem \cite[Theorem~1.40]{CourcelleE2012},
implies that for bounded linear clique-width graphs,
a property~$\Pc$ is \cmso definable if and only if it is testable by a finite automaton running on linear clique decompositions.
In the case of tournaments, our \fo transduction implies that the same holds with
\emph{existential \mso} (\emso) instead of \cmso.
Therefore, by going back and forth through automata on linear clique decompositions, we obtain the following.
\begin{restatable}{theorem}{emsoequiv} \label{thm:emso}
  For any~$k$ and any \cmso sentence~$\phi$ on tournaments,
  there is an \emso formula~$\psi$ such that~$\phi$ and~$\psi$
  are equivalent for tournaments of linear clique-width at most~$k$.
\end{restatable}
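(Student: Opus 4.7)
My plan is to pass back and forth through finite-state automata on linear clique decompositions. Let $k'$ be the bound provided by \cref{thm:main}. By the forward (``easy'') direction of Courcelle's theorem, the \cmso sentence $\phi$ can be compiled into a finite deterministic word automaton $\mathcal{A}$ over the finite alphabet of width-$k'$ linear clique decomposition operations such that $\mathcal{A}$ accepts a decomposition $D$ if and only if the graph decoded by $D$ satisfies $\phi$. Crucially, acceptance of a word by a finite automaton is itself expressible in \emso: one existentially guesses unary predicates $R_1, \ldots, R_s$ marking each position of the word by its automaton state, then checks in \fo that this marking is a valid accepting run.

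Next I unpack \cref{thm:main}. By definition, an \fo transduction is described by a fixed \fo interpretation $\iota$ together with a finite tuple of unary parameters $\bar{C}$ (the non-deterministic part): on any tournament $T$ of linear clique-width at most $k$, there exists a choice of $\bar{C}$ such that $\iota(T, \bar{C})$ is a valid linear clique decomposition of $T$ of width at most $k'$, and conversely, by item~(2) of \cref{thm:main}, whenever $\iota(T, \bar{C})$ is well-formed, it is such a decomposition. Combining with the previous paragraph, I take $\psi$ to be
\[
  \exists \bar{C}\ \exists \bar{R}\ \bigl(\theta_{\text{valid}}(\bar{C}) \wedge \theta_{\text{accept}}(\bar{C}, \bar{R})\bigr),
\]
where $\theta_{\text{valid}}$ is a fixed \fo formula asserting that $\iota(T, \bar{C})$ is a well-formed linear clique decomposition, and $\theta_{\text{accept}}$ is a fixed \fo formula asserting that $\bar{R}$ encodes an accepting run of $\mathcal{A}$ on that decomposition. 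The existential second-order quantifiers over $\bar{C}$ and $\bar{R}$ are monadic, so $\psi$ is in \emso.

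Correctness is immediate: when $T$ has linear clique-width at most $k$ and $T \models \phi$, \cref{thm:main} provides a suitable $\bar{C}$, and the unique accepting run $\bar{R}$ of $\mathcal{A}$ on $\iota(T, \bar{C})$ witnesses $\psi$; conversely, any witnesses $\bar{C}, \bar{R}$ produce a valid decomposition of $T$ accepted by $\mathcal{A}$, forcing $T \models \phi$. The main technical hurdle I anticipate is a careful unfolding of the definition of \fo transduction used in \cref{thm:main} so that $\theta_{\text{accept}}$ is genuinely \fo over $T$ equipped with $\bar{C}$ and $\bar{R}$: the positions of the output word $\iota(T, \bar{C})$ must be realised as (a fixed number of copies of) \fo-definable elements of $T$, with an \fo-definable successor relation, so that the transitions of $\mathcal{A}$ can be checked position by position. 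This is standard, but it is where the restriction from \cmso to \fo in \cref{thm:main}, rather than the \cmso transduction of \cref{thm:linear-cw}, is used decisively to guarantee that the outer existential second-order quantifier suffices.
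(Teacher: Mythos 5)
Your proof is correct and follows essentially the same route as the paper's: compose the forward direction of Courcelle's theorem on decompositions with Büchi–Elgot–Trakhtenbrot to express the property as an \emso sentence on decompositions-as-words, then pull it back through the \fo transduction of \cref{thm:main} by existentially guessing the non-deterministic colours $\bar{C}$ and the run $\bar{R}$. The paper packages the pullback step as a general Backwards Translation lemma for \fo transductions (\cref{lem:backward-translation-fo}) whereas you unroll it by hand, but the content is the same; the only caveat is that you silently absorb filtering, universe restriction, and copying into $\theta_{\text{valid}}$ and the identification of word positions, which is exactly what the paper's lemma handles operation by operation.
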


Let us conclude by discussing potential generalisations of our results to (non-linear) clique-width, and limits to them.
\Cref{thm:main} cannot directly generalise to bounded clique-width tournaments:
A Ramsey-like argument of Mikołaj Bojańczyk (personal communication) shows that
for~$\Tc$ the class of tournaments obtained by iterated lexicographic products of directed triangles
(which has clique-width~3),
no \fo transduction can produce a clique decomposition of bounded width for all tournaments in~$\Tc$.
On the other hand, it is simple to do so for the class~$\Tc$ with a transduction in the logic \emph{\fo with counting modulo~2} (\textsf{FO$+$C}).
To our knowledge, it is possible that \cref{thm:main} generalises to clique-width if we allow \textsf{FO$+$C} transductions.

A major part of our proof focuses on constructing a vertex ordering with low cut-rank, as a first step towards a linear clique decomposition.
We believe that asking to transduce \emph{any} total vertex ordering is an interesting intermediate question.
For which tournament classes~$\Cc$ and logics~$\mathcal{L}$ is there an $\mathcal{L}$-transduction that produces a total vertex ordering on any given $T \in \Cc$?
Bojańczyk's example once again implies that this is not possible for bounded clique-width tournaments and the logic~$\fo$, but \textsf{FO$+$C} could be sufficient.
Note that from a clique decomposition of small width of a tournament (represented with the ancestor--descendent relation of the tree),
one can always transduce a linear vertex ordering, corresponding to some left-to-right ordering on the leaves of the decomposition tree.\footnote{
  In the context of e.g.\ \cref{thm:linear-cw},
  one needs to consider a generalisation of clique decompositions
  where the decomposition tree may have nodes of unbounded degree.
  These high degree nodes make it impossible to transduce a total vertex ordering from the decomposition tree.
  In the case of tournaments however, they cannot exist.
  Indeed, a high degree node implies that there is a large subset~$X$ of vertices such that
  every bipartition of~$X$ is a cut of bounded rank, which is impossible in a tournament.
}
Thus transducing a total ordering is an easier question than transducing clique-decompositions.

\subsection*{Structure of the paper}
\Cref{Section: Preliminaries} presents standard definitions and results on clique-width and \fo transductions.
In \cref{Section: Linear decompositions and semigroups}, we introduce an infinite monoid $\mathfrak{C}_K$ corresponding to linear clique decompositions of width~$K$, and a finite quotient~$M_K$ of~$\mathfrak{C}_K$.
This gives us the power to use \emph{Simon's forest factorisation theorem}, stated in \cref{subsection: simon},
which for any linear clique decomposition in $\mathfrak{C}_K$ yields a \emph{factorisation}, subject to regularity conditions relative to~$M_K$, and with depth only depending on~$|M_K|$.
In \cref{sec:main}, we use induction on the factorisation given by Simon's theorem to prove a weakening of \cref{thm:main}: we construct a transduction which yields not a clique decomposition, but simply the associated vertex ordering.
\Cref{sec:definable-decomp} then shows how to transduce from this ordering to the clique decomposition, completing the proof of \cref{thm:main}, and finally shows how it implies \cref{thm:emso}.

\section{Preliminaries}\label{Section: Preliminaries}
We denote by~$[n]$ the interval of integers~$\{1,\dots,n\}$.

A \emph{tournament} $T = (V,E)$ is a directed graph with for each pair $u \neq v$ of vertices in~$V$, exactly one of the two possible directed edges~$(u,v)$ or~$(v,u)$ in~$E$.
We also write $V(T) \eqdef V$ and $E(T) \eqdef E$ for the vertex and edge sets of~$T$,
and denote an oriented edge $(u,v) \in E(T)$ as $u \to v$ to emphasise its orientation.
A \emph{bipartite tournament} $B = (L,R,E)$ is defined similarly restricting to edges between the two vertex sets~$L$ and~$R$: the edge set~$E$ consists of
exactly one of $u \to v$ or $v \to u$ for each pair of vertices~$u \in L$ and~$v \in R$.

In a tournament~$T$, two disjoint subsets of vertices $A,B \subset V(T)$ are called \emph{homogenous} if the edges between them are either all oriented
from~$A$ to~$B$, \ie $a \to b$ is an edge for all~$a \in A$ and $b \in B$, or inversely all from~$B$ to~$A$.

\subsection{Linear NLC-width and rank-width}
There are several graph complexity measures that are equivalent to (linear) clique-width. For our purposes, we will use two of them instead of linear
clique-width, namely \emph{linear NLC-width} and \emph{linear rank-width} (see for instance \cite{GurskiR19} for definitions).

For the definition of linear NLC-width, we restrict ourselves to tournaments, and refer to \cite{GurskiR19} for the definition on all graphs.
NLC-width and clique-width (and their linear variants) are very similar in definitions, and differ by a factor of at most~2, see for instance
\cite{gurski2005relationship,johansson1998clique,johansson2001labels} for the proof of the equivalences in the case of undirected graphs and \cite{GurskiWY16}
for directed versions.

A \emph{$k$-labelled tournament} is a tuple $T=(V,E,\lambda)$ where $(V,E)$ is a tournament equipped with a labelling map $\lambda : V \to [k]$.
We also often call~$\lambda(v)$ the \emph{colour} of the vertex~$v$; however~$\lambda$ is not a proper colouring.
\begin{definition}[Linear NLC-width]
  The class $\Tc_{k}$ consists of the $k$-labelled tournaments obtained by iteratively applying the following two operations, starting from the empty tournament:
  \begin{description}
    \item[vertex addition] Given a $k$-labelled tournament $T = (V,E,\lambda)$, a subset~$X \subseteq [k]$, and a colour~$c \in [k]$,
      add a new vertex~$v$ to~$V$, add for each~$u \in V$ the edge $u \to v$ when~$\lambda(u) \in X$, and~$u \from v$ otherwise, and finally extend~$\lambda$ to~$v$ as $\lambda(v) = c$.
    \item[relabelling] Given a $k$-labelled tournament $T = (V,E,\lambda)$ and a function $\rho : [k] \to [k]$, replace the labelling~$\lambda$ by $\rho \circ \lambda$.
  \end{description}
  The least~$k$ such that $T \in \Tc_k$ is called the \emph{linear NLC-width} of $T$, denoted by $\lnlcw(T)$.

  A sequence of vertex-addition and relabelling operations witnessing that $\lnlcw(T) \le k$ is called a \emph{linear NLC decomposition of width~$k$} of~$T$, or simply linear decomposition.
  Given a linear decomposition~$D$ of~$T$, the order in which vertices are added in~$D$ defines a linear ordering~$<$ of~$V(T)$.
  When~$D$ is a linear decomposition of width~$k$, we say that~$<$ is a \emph{linear decomposition ordering of width~$k$}.
  By extension, we define the \emph{relative NLC-width} of a linear ordering~$<$ of~$V$ as the smallest~$k$ such that~$<$ is a linear decomposition ordering of width~$k$.
\end{definition}

Let us now define \emph{linear rank-width},
which will be more convenient to bound the width of the output of our transductions.
We again restrict the definition to tournaments, but refer to for instance~\cite{AdlerFP14,oum2017rank} for more information on (linear) rank-width of
undirected graphs and to \cite{GurskiR19,KanteR13} for its relations to other parameters of directed graphs. Since we restrict ourselves to tournaments, the definition below is slightly
different from the one in  \cite{GurskiR19,KanteR13} where the field on $4$ elements is used while we use here the binary field, but the two definitions are
equivalent on tournaments. 
The adjacency matrix of a tournament~$T$ is the $\mathbb{F}_2$-matrix whose rows and columns are indexed by $V(T)$, where the entry at row~$x$ and column~$y$ is~$1$ only if $(x,y)\in E(T)$.

\begin{definition}[Linear rank-width]
  In a tournament~$T$, consider a bi-partition $V(T) = X \uplus Y$ of the vertices.
  The \emph{rank} of this bi-partition, denoted by~$\rk_T(X;Y)$, is the rank (over the binary field~$\mathbb{F}_2$) of the adjacency matrix of~$X$ versus~$Y$, \ie the
  adjacency matrix restricted to rows indexed by vertices in $X$ and columns indexed by vertices in $Y$.
  If $x_1 < \dots < x_n$ is a linear ordering of~$V(T)$, the \emph{cut-rank} of~$<$ is defined as
  \[ \max_i \rk\left(\{x_1,\dots,x_i\}; \{x_{i+1},\dots,x_n\}\right). \]
  The \emph{linear rank-width} of~$T$, denoted by $\lrw(T)$, is the minimum cut-rank over all linear orderings of~$V(T)$.
\end{definition}

Rank-width and clique-width (or NLC-width) are famously equivalent up to an exponential bound~\cite{oum2006approximating}.
The proof in fact relates the different widths of any fixed ordering.
Since we need this slightly more precise result, we state it below and provide a proof in the case of
tournaments, however a similar statement holds for all graphs.
\begin{lemma}
  \label{lem:cw-rw}
  For any linear ordering~$<$ of a tournament~$T$, if~$\nlcw$ and $\rw$ denote, respectively, the relative NLC-width and cut-rank of~$<$ in~$T$, then
  \[ \rw \le \nlcw \le 2^{\rw}+1. \]
\end{lemma}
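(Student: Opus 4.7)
The plan is to prove the two inequalities independently, with standard arguments that mirror the classical equivalence between clique-width and rank-width restricted to a fixed ordering.

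For the first inequality $\rw \le \nlcw$, I would fix a linear decomposition of width $\nlcw$ witnessing $\lnlcw(T) \le \nlcw$, and let $<$ be the induced ordering $x_1 < \dots < x_n$. The key observation is that if two vertices $x_\ell, x_m$ (with $\ell, m \le i$) carry the same label at time $i$, then since every subsequent relabelling is a function $[\nlcw] \to [\nlcw]$, they carry the same label at every later time $j > i$ as well; consequently, when any $x_j$ with $j > i$ is added, both are connected to $x_j$ in exactly the same way. This means the rows of the adjacency matrix from $\{x_1,\dots,x_i\}$ to $\{x_{i+1},\dots,x_n\}$ take at most $\nlcw$ distinct values, so its $\mathbb{F}_2$-rank is bounded by $\nlcw$. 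Taking the maximum over $i$ gives $\rw \le \nlcw$.

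For the second inequality $\nlcw \le 2^{\rw}+1$, I would constructively build a linear NLC decomposition realising the ordering $<$ and using at most $2^{\rw}+1$ labels. Set $k \eqdef \rw$, and for each $i$, let $\phi_i : \{x_1,\dots,x_i\} \to \{0,1\}^{\{x_{i+1},\dots,x_n\}}$ map each $v$ to its row in the adjacency matrix between the two sides of the cut after position $i$. Since the cut-rank is $k$, the image of $\phi_i$ has cardinality at most $2^k$. The decomposition is built inductively, maintaining the invariant that the labelling $\sigma_i$ after step $i$ uses a set of labels in bijection with $\mathrm{im}(\phi_i)$ and satisfies $\sigma_i(v)=\sigma_i(w) \iff \phi_i(v)=\phi_i(w)$.

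To go from step $i$ to step $i+1$, I would add $x_{i+1}$ using the set $X$ of labels in $\sigma_i$ whose corresponding row vector has a $1$ in the $x_{i+1}$-coordinate (this is well defined thanks to the invariant) and assigning $x_{i+1}$ a fresh label $c$ not in $\mathrm{im}(\sigma_i)$; this temporarily uses at most $2^k+1$ labels. Then I would apply a relabelling to restore the invariant for $\phi_{i+1}$: since $\phi_{i+1}$ on $\{x_1,\dots,x_i\}$ is just $\phi_i$ with the $x_{i+1}$-coordinate removed, the equivalence classes of $\phi_i$ refine those of $\phi_{i+1}$, so one can simply collapse classes that become equal and merge the fresh class of $x_{i+1}$ with a pre-existing class of equal $\phi_{i+1}$-value (or keep it separate). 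After relabelling, at most $|\mathrm{im}(\phi_{i+1})| \le 2^k$ labels remain in use. The main subtlety, and the step deserving the most care, is verifying that exactly $2^k+1$ labels suffice at the intermediate moment (not more), which follows from always reserving a single fresh label for the incoming vertex and the refinement property linking $\phi_i$ and $\phi_{i+1}$.
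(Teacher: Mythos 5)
Your proposal is correct and follows essentially the same two-step argument as the paper: for $\rw \le \nlcw$, the observation that equal labels at time $i$ imply identical future adjacency (bounding distinct rows by $\nlcw$); for $\nlcw \le 2^{\rw}+1$, the inductive construction keeping labels in bijection with neighbourhood classes (your $\mathrm{im}(\phi_i)$ is exactly the paper's partition $\Pc_i$), using one reserved fresh label for the incoming vertex and the refinement of $\Pc_i$ by $\Pc_{i+1}$ to relabel. The only cosmetic difference is that the paper fixes the reserved colour to be the constant $2^k+1$ throughout rather than choosing it fresh at each step, but this is immaterial.
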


\begin{proof}
  Enumerate~$V(T)$ as $v_1 < \dots < v_n$.

  First assume that~$<$ is a linear NLC decomposition ordering of width~$k$.
  In the corresponding linear decomposition, call $\lambda_i : \{v_1,\dots,v_i\} \to [k]$ the labelling obtained just after the addition of~$v_i$.
  Then for any $j \le i$ and $\ell > i$, the direction of the edge $v_j \to v_\ell$ or $v_j \from v_\ell$ only depends on~$v_\ell$ and the label~$\lambda_i(v_j)$.
  It follows that the adjacency matrix of $\{v_1,\dots,v_i\}$ versus $\{v_{i+1},\dots,v_n\}$ has at most~$k$ distinct rows corresponding to the~$k$ possible labels in~$\lambda_i$, hence its rank is at most~$k$.
  This proves that~$<$ has cut-rank at most~$k$.

  Conversely, assume that~$<$ has cut-rank at most~$k$.
  For each~$i \in [n]$, define the partition~$\Pc_i$ of~$\{v_1,\dots,v_i\}$ into neighbourhood classes:
  two vertices are in the same class when they have exactly the same neighbours in~$\{v_{i+1},\dots,v_n\}$.
  The assumption that $\rk\left(\{v_1,\dots,v_i\}; \{v_{i+1},\dots,v_n\}\right) \le k$ implies that~$\Pc_i$ contains at most~$2^k$ parts.
  For each~$i$, choose a labelling map $\lambda_i : \{v_1,\dots,v_i\} \to [2^k]$ such that $\Pc_i = \setst{\lambda_i^{-1}(c)}{c \in [2^k]}$.
  From the definition, it is clear that each part of~$\Pc_i$ is contained within a part of~$\Pc_{i+1}$.
  This implies that the restriction of~$\lambda_{i+1}$ to~$\{v_1,\dots,v_i\}$ can be written as $\rho_i \circ \lambda_i$ for some $\rho_i : [2^k] \to [2^k]$.

  Suppose we have already constructed a linear NLC-decomposition for $T[\{v_1,\dots,v_i\}]$ of width~$2^k+1$ with labelling~$\lambda_i$ and insertion ordering~$<$.
  Note that the colour~$2^k+1$ is unused in~$\lambda_i$.
  For any $j \le i$, the direction of the edge between~$v_j$ and~$v_{i+1}$
  only depends on the part of~$\Pc_i$ containing~$v_j$, hence only on the label~$\lambda_i(v_j)$.
  Thus adding~$v_{i+1}$ with the desired edges is a valid vertex addition operation.
  We assign the last colour~$2^k+1$ to~$v_{i+1}$ at this step.
  Then, to obtain the labelling~$\lambda_{i+1}$ we apply the relabelling map~$\rho_i$, extended by mapping~$2^k+1$ to the desired colour for~$v_{i+1}$.
  Repeating these two steps for each~$i$ yields a linear NLC decomposition of width~$2^k+1$, where vertices are added in the order given by~$<$,
  thus proving that~$<$ is a linear NLC decomposition ordering of width at most~$2^k+1$.
\end{proof}

The following lemma will be used to bound the cut-rank of the ordering obtained in \cref{thm:main}.
\begin{lemma}
  \label{lem:lexico-rankwidth} Let $<$ be a linear ordering of the vertices of a tournament $T$ and let
  $X_1 < \dots < X_\ell$  be a partition of $V(T)$ into intervals of $<$ such that, for each $1\leq i\leq \ell$, 
  \begin{enumerate}
    \item $\rk(X_1 \cup \dots X_i; X_{i+1} \cup \dots X_\ell) \le k$, and
    \item the linear ordering $<$ restricted to the sub-tournament $T[X_i]$ has cut-rank at most~$k'$.
  \end{enumerate}
  Then~$<$ has cut-rank at most~$2k+k'$.
\end{lemma}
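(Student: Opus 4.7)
\textbf{Proof plan for \cref{lem:lexico-rankwidth}.} Fix an arbitrary cut of $<$, say a prefix $A$ and suffix $B$ with $V(T) = A \uplus B$, and let $j$ be the unique index such that $A$ contains all of $X_1, \dots, X_{j-1}$ and $B$ contains all of $X_{j+1}, \dots, X_\ell$. (If the cut lies exactly between two intervals, the bound of $k \le 2k+k'$ follows directly from hypothesis~(1), so we may assume it lies strictly inside $X_j$.) Write $Y \eqdef X_j \cap A$ and $Z \eqdef X_j \cap B$, so $X_j = Y \uplus Z$.

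The plan is to bound $\rk_T(A;B)$ by splitting its adjacency matrix along $A = (X_1 \cup \dots \cup X_{j-1}) \uplus Y$ on the rows and $B = Z \uplus (X_{j+1} \cup \dots \cup X_\ell)$ on the columns. This yields a $2 \times 2$ block matrix $M$ whose four blocks I would denote $M_{11}, M_{12}, M_{21}, M_{22}$ in the obvious way. By subadditivity of rank, we get
\[
\rk(M) \le \rk\bigl(M_{11} \mid M_{12}\bigr) + \rk(M_{21}) + \rk(M_{22}).
\]

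Now each of the three terms on the right is directly controlled by a hypothesis of the lemma. The row block $(M_{11} \mid M_{12})$ is a submatrix of the adjacency matrix of $X_1 \cup \dots \cup X_{j-1}$ versus $X_j \cup \dots \cup X_\ell$, whose rank is at most $k$ by~(1) applied at index $j-1$. The block $M_{22}$ is a submatrix of the adjacency matrix of $X_1 \cup \dots \cup X_j$ versus $X_{j+1} \cup \dots \cup X_\ell$, whose rank is at most $k$ by~(1) applied at index $j$. Finally, $M_{21}$ is precisely the adjacency matrix of the cut $Y$ versus $Z$ inside the sub-tournament $T[X_j]$, and since this cut is induced by the restriction of $<$ to $X_j$, hypothesis~(2) gives $\rk(M_{21}) \le k'$. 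Adding these three bounds yields $\rk(A;B) \le 2k + k'$, and since the cut was arbitrary, the cut-rank of $<$ is at most $2k + k'$.

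There is no real obstacle here: the result is a soft consequence of the subadditivity of matrix rank under block decomposition, together with a careful matching of the three relevant submatrices to the three hypotheses. The only thing worth being careful about is the edge case where the cut aligns with a boundary between two consecutive intervals $X_j, X_{j+1}$, which is handled separately and directly by hypothesis~(1).
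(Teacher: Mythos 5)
Your proof is correct and follows essentially the same approach as the paper: both decompose the adjacency matrix of the arbitrary cut into three pieces, one controlled by hypothesis~(2) inside a single interval $X_j$, and two controlled by hypothesis~(1) at indices $j-1$ and $j$, then apply subadditivity of rank. The paper writes the same block decomposition with slightly different bookkeeping (it pads some of the submatrices up to larger ones before applying the hypotheses), but the underlying argument is identical.
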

\begin{proof}
  Consider a bi-partition $V(T) = Y \uplus Z$ with~$Y$ and~$Z$ prefix and suffix respectively of the ordering~$<$.
  Call~$X_i$ the last interval intersected by~$Y$, and define
  \begin{align*}
    Y' & \eqdef Y \cap X_i &  Y'' & \eqdef X_1 \cup \dots \cup X_{i-1}, \quad \text{and symmetrically}  \\
    Z' & \eqdef Z \cap X_i  & Z'' & \eqdef X_{i+1} \cup \dots \cup X_\ell.
  \end{align*}
  Note that $Y = Y' \cup Y''$ and $Z = Z' \cup Z''$.
  By assumption, $\rk(Y'' \cup X_i; Z'')$ and $\rk(Y''; Z'' \cup X_i)$ are at most~$k$.
  Also, $Y',Z'$ is a prefix--suffix bi-partition of~$X_i$, hence $\rk(Y';Z') \le k'$.
  Thus
  \begin{align*}
    \rk(Y;Z) & \le \rk(Y';Z') + \rk(Y;Z'') + \rk(Y'';Z) \\
    & \le \rk(Y';Z') + \rk(Y'' \cup X_i;Z'') + \rk(Y''; Z \cup X_i) \\
    & \le k' + 2k. \qedhere
  \end{align*}
\end{proof}

\subsection{First-order logic, interpretations, and transductions}
\label{subsection: interpretation and transductions}
A \emph{relational signature}~$\Gamma$ is a finite set of \emph{relation symbols}~$R \in \Gamma$ each with an \emph{arity}~$\arity(R) \in \Nn$.
A \emph{relational structure}~$S$ over the signature~$\Gamma$, or $\Gamma$-structure for short,
consists of a \emph{universe} or \emph{vertex set}~$V(S)$, and for each symbol~$R \in \Gamma$ with arity~$r = \arity(R)$, a valuation of the symbol as a relation $R^S \subseteq (V(S))^r$.
For instance, directed simple graphs with loops are exactly $\{E\}$-structures, where~$E$ is a binary symbol (meaning~$\arity(E) = 2$) representing directed
edges. The transduction in \cref{thm:main} will take as inputs $\{E\}$-structures.

\fo formulas over the signature~$\Gamma$ are formulas which can be evaluated in a $\Gamma$-structure.
They are defined inductively as follows:
\begin{description}
  \item[atomic formulas] For each symbol~$R \in \Gamma$ with arity~$r$, there is a predicate~$R(x_1,\dots,x_r)$
    which checks that the vertices represented by the variables~$x_1,\dots,x_r$ form an $r$-tuple in the relation~$R$.
    There is also an equality predicate~$x = y$, checking that the variables~$x,y$ represent the same vertex.
  \item[combinators] \fo formulas $\phi,\psi$ can be combined with the usual boolean operations $\neg \phi$, $\phi \wedge \psi$, $\phi\vee\psi$.
  \item[quantifiers] \fo formulas allow quantification over the vertices of the structure $\exists x,\,\phi$ and $\forall x,\,\phi$, where~$\phi$ is itself an $\fo$ formula and~$x$ is a variable.
\end{description}
When a formula~$\phi$ has free variables~$x_1,\dots,x_r$ (i.e.\ variables appearing without any binding quantifier), we write them as parameters~$\phi(x_1,\dots,x_r)$.
Given a $\Gamma$-structure~$S$ and vertices $v_1,\dots,v_r \in V(S)$, one can define whether $\phi(x_1,\dots,x_r)$ is satisfied by~$S$ when evaluating~$x_i$ as~$v_i$ in the obvious way;
this is denoted as $S \models \phi(v_1,\dots,v_r)$.

Let us now define \emph{\fo transductions}. We follow \cite{bojanczyk2016definability} for our terminology. An \emph{\fo transduction} from a signature~$\Gamma$ to a signature~$\Delta$ is a non-deterministic (one-to-many) map from $\Gamma$-structures to $\Delta$-structures, consisting of any sequence of the following operations.
\begin{description}
  \item[interpretation]
    An interpretation~$\Phi$ from signature~$\Gamma$ to~$\Delta$ is described by a formula~$\phi_R(x_1,\dots,x_r)$ over~$\Gamma$ for each symbol~$R \in \Delta$ with arity~$r$.
    It deterministically maps a $\Gamma$-structure~$S$ to the $\Delta$-structure~$\Phi(S)$
    with same vertices and with relations
    \[ R^{\Phi(S)} = \setst{(v_1,\dots,v_r)}{S \models \phi_R(v_1,\dots,v_r)}. \]
  \item[colouring] This operation enables non-determinism by extending the signature~$\Gamma$ with unary relations (\ie with arity~1).
    For unary relation symbols~$C_1,\dots,C_c$ not present in~$\Gamma$,
    colouring a $\Gamma$-structure~$S$ by $C_1,\dots,C_c$ outputs the set of all $(\Gamma \cup \{C_1,\dots,C_c\})$-structures~$S^+$
    which coincide with~$S$ on~$\Gamma$ (meaning $V(S^+) = V(S)$ and $R^{S^+} = R^S$ for all~$R \in \Gamma$).
  \item[filtering] The colouring operations outputs all possible colourings, but we may only be interested in some of them.
    The filtering operation allows to remove unwanted structures.
    For a sentence~$\phi$ over~$\Gamma$, given a $\Gamma$-structure~$S$, the operation of filtering by~$\phi$ outputs~$S$ itself when~$S \models \phi$, and outputs nothing otherwise.
  \item[universe restriction]
    The restriction operation is described by a single unary formula~$\phi(x)$, and deterministically maps $\Gamma$-structures to $\Gamma$-structures by deleting any vertex~$v$ which does not satisfy~$\phi(v)$.
  \item[$k$-copying]
    Given a $\Gamma$-structure~$S$, this operation outputs the structure consisting of~$k$ disjoint copies of~$S$,
    together with a new relation~$C$ of arity~$k$ interpreted as the set of tuples $(v_1,\dots,v_k)$ for each~$v \in V(S)$ where~$v_i$ denotes the $i$th copy of~$v$.
\end{description}

Each of the previous operations~$f$ has input and output signatures~$\Gamma$ and~$\Delta$,
and maps a $\Gamma$-structure~$S$ to a set $f(S)$ of $\Delta$-structures.
For deterministic operations (interpretations, universe restriction, copying), $f(S)$ is a singleton,
for colouring it is a non-empty set, and for filtering it is either~$\{S\}$ or~$\emptyset$.
When~$\C$ is a set of~$\Gamma$-structures, we write $f(\C) = \bigcup_{S \in \C} f(S)$.
Thus, these operations map sets of $\Gamma$-structures to sets of $\Delta$-structures, allowing to compose them when the signatures match. 

It is well known that any \fo transduction can be equivalently expressed using only one instance of each operation, in the following order:
copying, colouring, filtering, interpretation, and universe restriction.
This follows from the Backwards Translation Theorem \cite{CourcelleE2012}, which roughly says that, for any interpretation~$\Phi$ from~$\Gamma$ to~$\Delta$ and formula~$\psi$ over~$\Delta$,
there is a formula $\psi \circ \Phi$ over~$\Gamma$ such that $S \models \psi \circ \Phi$ if and only if $\Phi(S) \models \psi$,
obtained from~$\psi$ by replacing a predicate~$R \in \Delta$ by the formula~$\phi_R$ defining it in~$\Phi$.

An \fo transduction is said to be \emph{non-copying} if it does not use the copying operation, and is called \emph{deterministic} if it does not use colouring.
Given signatures $\Gamma \subseteq \Delta$, a $\Gamma$-structure~$S$ and a $\Delta$-structure~$T$,
we say that~$S$ is a \emph{reduct} of~$T$, and~$T$ an \emph{extension} of~$S$, if $V(S) = V(T)$, and $R^S = R^T$ for all symbols $R \in \Gamma$.
That is, $T$ is exactly the structure~$S$ with some additional relations added for the symbols in $\Delta \setminus \Gamma$.
A transduction $\Phi : \Gamma \to \Delta$ is an \emph{extension transduction}
if for any $\Gamma$-structure~$S$, any $T \in \Phi(S)$ is an extension of~$S$.

Let us finally recall the Parallel Application Lemma, which allows to apply in parallel the same transduction to arbitrarily many disjoint substructures of a given structure.
Let~$\Gamma$ be a signature and~$S_1,\dots,S_n$ be disjoint $\Gamma$-structures.
Define the \emph{disjoint union} of $S_1,\dots,S_n$, denoted by~$\bigsqcup_{i=1}^n S_i$,
as the following structure over the vocabulary~$\Gamma\cup\{\thicksim\}$
where~$\thicksim$ is a new binary relation symbol:
\begin{itemize}
  \item the universe of~$\bigsqcup_{i=1}^n S_i$
    is the disjoint union of the universes of the $S_i$s;
  \item for each relation symbol from~$\Gamma$,
    its interpretation in~$\bigsqcup_{i=1}^n S_i$
    is the union of the interpretations in each of the~$S_i$s;
  \item the interpretation of~$\thicksim$ in~$\bigsqcup_{i=1}^n S_i$
    is the set of pairs of elements that originate from the same~$S_i$.
\end{itemize}
 
\begin{lemma}[Parallel Application Lemma \cite{bojanczyk2018cliquewidth}]
   \label{lem:parallel-transduction}
   For any \fo-transduction $\tau : \Sigma \to \Gamma$, there is a second \fo-transduction $\hat{\tau} : (\Sigma\cup\{\thicksim\}) \to (\Gamma\cup\{\thicksim\})$ such that
   for any input sequence~${I}_1,\dots,{I}_n$ of $\Sigma$-structures and $\mathcal{I} \eqdef \bigsqcup_{i=1}^n I_i$,
   the outputs $\mathcal{O} \in \hat{\tau}(\mathcal{I})$ are exactly the structures of the form
   $\mathcal{O} = \bigsqcup_{i=1}^n {O}_i$ with $O_i \in \tau(I_i)$ for all $i \in [n]$.
 \end{lemma}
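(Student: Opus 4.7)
The strategy is to induct on the number of primitive operations in the description of~$\tau$, producing a parallel counterpart~$\hat f$ for each primitive~$f$ (copying, colouring, filtering, interpretation, universe restriction). The crucial observation is that in a disjoint union $\mathcal{I} = \bigsqcup_{i=1}^n I_i$, each component is uniformly \fo-definable from~$\thicksim$: the universe of~$I_i$ is just the $\thicksim$-class of any of its elements. Consequently, for any \fo formula~$\phi(\bar x)$ over~$\Sigma$ one obtains a \emph{relativization} $\phi^\thicksim(\bar x)$ by restricting every quantifier to the $\thicksim$-class of a distinguished variable, so that $\mathcal{I} \models \phi^\thicksim(\bar v)$ iff $I_i \models \phi(\bar v)$ whenever $\bar v \subseteq V(I_i)$.

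With this relativization in hand, the handling of four of the five primitives is routine. Colouring needs no modification, since it adds only unary predicates which are compatible with disjoint unions. For an interpretation defined by formulas~$\phi_R(x_1,\dots,x_r)$, replace each~$\phi_R$ by $\bigwedge_{j=2}^{r}(x_1 \thicksim x_j) \wedge \phi_R^\thicksim(x_1,\dots,x_r)$, thus ensuring that tuples spanning several components contribute no relation. For filtering by a sentence~$\phi$, use the sentence $\forall x,\,\phi^\thicksim(x)$, which enforces that \emph{every} component individually satisfies~$\phi$. For universe restriction by a formula~$\phi(x)$, simply replace it by~$\phi^\thicksim(x)$.

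The subtle case, and the only one I expect to need genuine care, is $k$-copying. A naive $k$-copying applied to~$\mathcal{I}$ produces the correct universe and~$\Sigma$-relations, but leaves~$\thicksim$ split across the~$k$ copies, whereas the intended output should have corresponding copies of the same component $\thicksim$-related. I would fix this by composing the naive $k$-copying with a short interpretation that uses the new arity-$k$ relation~$C$ to redefine~$\thicksim$: declare $x \thicksim y$ in the output iff there exist $a,b$ lying in the same $C$-tuple as~$x$ and~$y$ respectively, with $a \thicksim b$ in the naive image. Composing the parallel versions of the operations in the order prescribed by~$\tau$ then yields~$\hat\tau$, and correctness follows by an easy induction maintaining the invariant that after each step the current structure is the disjoint union of the corresponding intermediate images of the~$I_i$'s, with~$\thicksim$ marking the components. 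The only real conceptual obstacle is to preserve this $\thicksim$ invariant through copying; once this is settled, the relativization trick does the rest.
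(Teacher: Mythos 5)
The paper cites this lemma from Bojańczyk--Grohe--Pilipczuk and does not reprove it, so there is no in-paper proof to compare against; judged on its own, your proof is correct and it follows the standard relativization argument that one expects for such a Feferman--Vaught-style statement. The key points are all there: relativizing quantifiers to $\thicksim$-classes handles interpretation, filtering, and universe restriction; colouring commutes with disjoint union for free; and you correctly identified that $k$-copying is the one operation that damages $\thicksim$, and your fix-up interpretation (merging the $k$ copies of each $\thicksim$-class via the copy relation $C$) is right. The inductive invariant you state at the end --- that after each stage the current structure is the disjoint union of the intermediate images of the $I_i$, with $\thicksim$ marking components --- is exactly what makes the composition go through.

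Two small points worth making explicit if you write this up. First, in the interpretation case you should also say what $\hat\tau$ does to $\thicksim$ itself: it outputs it unchanged, via $\phi_{\thicksim}(x,y) \eqdef x \thicksim y$. Second, your filtering formula $\forall x,\ \phi^{\thicksim}(x)$ silently assumes that every component $I_i$ is non-empty; an empty component is invisible in $\bigsqcup_i I_i$, so it cannot be filtered on. This is harmless here because empty structures cannot meaningfully occur as components of a disjoint union, and in this paper the lemma is only applied to partitions into non-empty parts, but it is the sort of convention one should state.
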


\subsection{Example: transducing cut-width orderings}
To illustrate the previous notions and introduce some conventions, let us prove a very simple case of \cref{thm:main}.
For a tournament~$T$ and an ordering $v_1,v_2,\dots,v_n$ of the vertices, a \emph{cut} of this ordering is a partition into two intervals $\{v_1,\dots,v_i\}$ and $\{v_{i+1},\dots,v_n\}$.
The \emph{width} of this cut is the number of edges oriented backwards $u \from v$ where $u \in \{v_1,\dots,v_i\}$ and $v \in \{v_{i+1},\dots,v_n\}$.
The \emph{cut-width} of this ordering is the maximum width of its cuts. The \emph{cut-width} of~$T$ is the minimum cut-width over all linear orderings of its vertices.

For any~$k$, we want a transduction~$\Phi$ which, given a tournament~$T$, produces orderings of cut-width~$k$, if any.
The signature of~$\Phi$ should be $\{E\} \to \{E,<\}$ (the symbols~$E,<$ being both binary):
the transduction is given a tournament~$(V,E)$, and outputs an \emph{ordered tournament}~$(V,E,<)$, where~$<$ should be the cut-width ordering.
Since we expect the vertex and edge sets to remain the same in the output, we also ask~$\Phi$ to be an \emph{extension transduction}.

\begin{proposition}\label{prop:cutwidth}
  For any~$k$, one can compute an extension \fo-transduction $\Phi : \{E\} \to \{E,<\}$ such that for any tournament $T = (V,E)$,
\[ \Phi(T) = \{(V,E,<) : \text{$<$ is a cut-width~$k$ ordering of~$T$}\}. \]
\end{proposition}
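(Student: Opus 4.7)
The plan is to realise $\Phi$ as a composition: a colouring step that guesses positional information, an FO interpretation defining~$<$ (and leaving $E$ unchanged, as required for an extension transduction), and a filter enforcing the cut-width bound. Let $M = 8k+3$ and introduce $M$ fresh unary colour predicates $C_0, \dots, C_{M-1}$; the colouring non-deterministically annotates each vertex with exactly one colour, the intended assignment being $\chi(v_i) = (i-1) \bmod M$ whenever the target ordering is $v_1 < \dots < v_n$.

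The core combinatorial observation is a counting bound on the FO-definable witness set $W(u,v) \eqdef \{w : u \to w \text{ and } w \to v\}$. In a cut-width~$k$ ordering, every vertex has at most $k$ backward out-neighbours to its left and at most $k$ backward in-neighbours to its right, bounded by the cuts immediately before and after it. A case split on the relative position of $w$ yields: if $v$ comes before $u$, every $w \in W(u,v)$ is either a backward out-neighbour of $u$ or a backward in-neighbour of $v$, so $|W(u,v)| \le 2k$; conversely, if $u$ comes before $v$ with positional gap at least $4k+2$, then among the $\ge 4k+1$ vertices strictly between them at most $2k$ fail to satisfy both $u \to w$ and $w \to v$, so $|W(u,v)| \ge 2k+1$. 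Hence the FO formula $\phi_\prec(u,v)$ expressing ``$|W(u,v)| \ge 2k+1$'' (FO since $k$ is fixed) is always a subrelation of the target ordering and captures every pair of positional gap at least $4k+2$.

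The remaining ``short'' pairs of positional gap at most $4k+1$ are resolved by the colours, since $4k+1 < M$. Writing $d(u,v) = (\chi(v) - \chi(u)) \bmod M$, one checks that $d(u,v) \in \{1, \dots, 4k+1\}$ precisely when $v$ is after $u$ in the ordering, and $d(u,v) \in \{4k+2, \dots, 8k+2\}$ when $v$ is before $u$ --- two disjoint ranges partitioning $\{1, \dots, M-1\}$. Define the FO formulas
\[ \phi_{\mathrm{col}}(u,v) \eqdef \bigvee_{\substack{a,b \in \{0,\dots,M-1\} \\ (b-a) \bmod M \in \{1,\dots,4k+1\}}} \bigl(C_a(u) \wedge C_b(v)\bigr), \]
and $\phi_<(u,v) \eqdef \phi_\prec(u,v) \vee \bigl(u \ne v \wedge \neg\phi_\prec(u,v) \wedge \neg\phi_\prec(v,u) \wedge \phi_{\mathrm{col}}(u,v)\bigr)$.

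The filter keeps only outputs where $\phi_<$ defines a linear order of cut-width at most~$k$; both conditions are FO-expressible (counting backward edges at each cut up to $k+1$ suffices) and can be moved in front of the interpretation by backward translation to match the canonical form of FO transductions. Correctness has two sides: every surviving output is by construction a linear order of cut-width at most~$k$ on $T$; conversely, any such ordering $<_0$ is realised by its canonical colouring $\chi(v_i) = (i-1) \bmod M$, because the analysis above forces $\phi_<$ to coincide with $<_0$. The step I expect to require the most care is the upper bound $|W(u,v)| \le 2k$ when $u$ is after $v$ --- verifying that no witness escapes the ``backward out-neighbour of $u$ or backward in-neighbour of $v$'' dichotomy, including the boundary case where $w$ lies strictly between $v$ and $u$ --- everything beyond that is modular-arithmetic bookkeeping.
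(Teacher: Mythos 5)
Your proposal is correct and follows essentially the same approach as the paper: the witness predicate $\phi_\prec$ counting common out-/in-neighbours is the paper's $\phi(x,y)$, the two counting bounds (at most $2k$ witnesses for a wrong-way pair, at least $2k+1$ for a far-apart correct pair) are the same lemmas, and the modular colouring plus final filtering is identical in structure. The only cosmetic difference is the constant: you use $8k+3$ colours (the tight bound for gap $\le 4k+1$) whereas the paper uses $8k+5$, both of which work.
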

\begin{proof}
  The transduction~$\Phi$ is the composition of two steps:
  first an extension transduction~$\tau : \{E\} \to \{E,<\}$ which produces \emph{at least} the desired set of ordered tournaments,
  i.e.\ $(V,E,<) \in \tau(V,E)$ for any cut-width~$k$ ordering~$<$,
  followed by a filtering transduction to remove any undesired structure.

  Let us start with the filtering step.
  Over the signature~$\{E,<\}$, with~$E$ supposedly being the edge relation of a tournament and~$<$ a linear ordering, there are simple \fo formulas (depending on~$k$) expressing the following conditions:
  \begin{itemize}
    \item the relation~$<$ is interpreted as a linear ordering, and
    \item for each vertex~$v$, there are at most~$k$ distinct edges $x_i \from y_i$ in~$E$ with $x_i \le v < y_i$,
      i.e.\ the cut $\{w: w \le v\},\{w: w>v\}$ has width at most~$k$.
  \end{itemize}
  The filtering transduction simply checks that both are satisfied.

  Let us now turn our attention to the extension transduction that will output all linear orderings of cut-width at most $k$.
  Let~$T$ be a tournament and $v_1 < \dots < v_n$ a linear ordering of cut-width at most~$k$ of $T$.
  First, if there are~$2k+1$ vertices~$z_1,\dots,z_{2k+1}$, each satisfying $x \to z_i \to y$, then $x<y$ must hold.
  Indeed, at most~$k$ out-neighbours of~$x$ can be before~$x$ in~$<$,
  and symmetrically at most~$k$ in-neighbours of~$y$ are before~$y$, leaving at least one vertex satisfying $x < z_i < y$.
  This criteria is expressed in \fo as
  \[\phi(x,y) \eqdef \exists z_1,z_2,\dots, z_{2k+1}, \ \bigwedge_{i=1}^{2k+1} E(x,z_i)\wedge E(z_i,y).\]

We cannot expect~$\phi(x,y)$ by itself to describe a total ordering of~$T$, but it is not too far from doing so: $\phi(v_i,v_j)$ holds whenever~$j-i > 4k+1$.
Indeed, of the~$4k+1$ vertices $v_{i+1},\dots,v_{j-1}$, at most~$k$ can be in-neighbours of~$v_i$, and at most~$k$ out-neighbours of~$v_j$.
This leaves~$2k+1$ of them satisfying $v_i \to v_\ell \to v_j$, hence~$\phi(v_i,v_j)$ holds.

The previous two facts imply that two cut-width~$k$ orderings of the same tournament cannot be very different: they agree on all but~$(4k+1)n$ of the pairs of vertices.
There remains some choice in the ordering of the remaining pairs, which is why we need non-determinism.

We now begin the construction of our transduction itself.
The first step is to non-deterministically colour~$T$ with~$8k+5$ colours.
We assume that this colouring assigns to~$v_i$ the colour~$i$ modulo~$8k+5$, and show that this specific choice gives~$<$ as output.
For this specific colouring, if vertices~$v_i,v_j$ are at distance at most~$4k+2$, one can immediately tell whether $v_i < v_j$ or $v_j < v_i$ simply by looking at their colours. This can be encoded by a simple, but tedious \fo formula~$\gamma(x,y)$.
We then apply the following interpretation to obtain~$<$: given vertices~$u,v$,
\begin{enumerate}
  \item if~$\phi(u,v)$ holds, then~$u < v$ and symmetrically if~$\phi(v,u)$ holds, then~$u > v$, and
  \item otherwise, $u,v$ must be at distance at most~$4k+2$ (in the successor relation of $<$), and we test whether~$u < v$ by using their colours.
\end{enumerate}
This description of~$<$ is encoded by the following formula:
\[ \psi_<(x,y) = \phi(x,y) \lor (\lnot \phi(y,x) \land \gamma(x,y)). \]
Since we wish to preserve the edge relation~$E$ (in order to check that each linear ordering has cut-width at most $k$), we add a second `identity' formula $\psi_E(x,y) = E(x,y)$.
The formulas~$\psi_E,\psi_<$ define an interpretation from signature $\{E,C_1,\dots,C_{8k+5}\}$ to $\{E,<\}$.

In the end, the transduction consists of first colouring with~$C_1,\dots,C_{8k+5}$, then interpreting by~$(\psi_E,\psi_<)$, and finally checking (with the filtering operation) that~$<$ indeed has cut-width at most~$k$.
\end{proof}

\section{Linear decompositions and semigroup theory}\label{Section: Linear decompositions and semigroups}
This section presents linear decompositions as constructed from pieces called \emph{bags},
which can be composed, giving them a monoid structure.
In this context, we will describe a linear decomposition as a sequence of small bags.
We then introduce Simon's Factorisation Forest Theorem,
which will be used to obtain factorisations of such sequences, subject to some regularity conditions.

While all the notions presented here can be defined for directed graphs of small NLC-width, we restrict the definitions
to tournaments to simplify notions and notations. We refer to \cite{bojanczyk2018cliquewidth} for a similar monoid for undirected graphs of small clique-width, which trivially extends to directed graphs.

\subsection{Linear decompositions as a monoid}\label{Section: Linear decompositions as a monoid}
A \emph{bag} is essentially a tournament with some additional information describing how it should be glued with other bags, so as to define a deterministic product operation on bags.
Precisely, for~$k \in \Nn$, a bag~$B$ of order~$k$ consists of the following:
\begin{enumerate}
 \item An \emph{internal tournament} $(V(B),\edgeint(B))$,
   whose vertices~$V(B)$ and edges~$\edgeint(B)$ are called \emph{internal vertices} and \emph{internal edges}.
 \item A \emph{boundary bipartite tournament} $([k], V(B), \edgebnd(B))$.
   Here, $1,\dots,k$ are called \emph{input vertices}, and stand for colour classes of vertices to the left of~$B$.
 \item A \emph{colouring}~$\lambda_B : V(B) \to [k]$ of the internal vertices.
 \item A \emph{recolouring} function~$\rho_B : [k] \to [k]$, which can be seen as a re-colouring of the input vertices.
 \end{enumerate}

 A visual representation of bags is depicted in \cref{fig:bag}.

 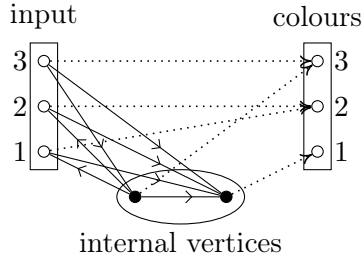
\begin{figure}[htb]
  \begin{center}
    \begin{tikzpicture}[scale=1.2]
      \foreach \i in {1,2,3}{
        \node[fakevertex] (i\i) [label=left:$\i$] at (0,0.5*\i) {};
        \node[fakevertex] (o\i) [label=right:$\i$] at (3,0.5*\i) {};
      }

      \node[vertex] (x) at (1,0) {};
      \node[vertex] (y) at (2,0) {};

      \draw (1.5,0) ellipse (0.7cm and 0.3cm);
      \node at (1.5,-0.5) {internal vertices};

      \draw (-0.15,0.3) rectangle (0.15,1.7);
      \node at (0,2) {input};

      \draw (2.85,0.3) rectangle (3.15,1.7);
      \node at (3,2) {colours};

      \foreach \s/\t in {i3/o3,i2/o2,i1/o2,x/o3,y/o1}{
        \draw[colormap] (\s) -- (\t);
      }
      \foreach \s/\t in {x/i1,x/i2,i3/x,i1/y,i2/y,i3/y,x/y}{
        \draw[directed] (\s) -- (\t);
      }
    \end{tikzpicture}
  \end{center}
  \caption{%
    Representation of a bag~$B$.
    Input vertices and colours are drawn as empty nodes on the left and right respectively,
    while the internal vertices are filled.
    The colouring maps~$\lambda_B$ and~$\rho_B$ are drawn as dotted arrows,
    while edges in~$\edgeint(B)$ and~$\edgebnd(B)$ are solid arrows.
  }
  \label{fig:bag}
\end{figure}

 The objective is to prove that if a tournament has linear NLC-width $k$, then it is a value of a word from a finite alphabet composed of bags. Let us define for
that the multiplication operator on bags.  Intuitively, when multiplying on the right by the bag~$B$, the following happens: edges are added between any~$v \in V(B)$ and all
existing vertices of colour~$c$ according to the direction of the edge~$c \to v$ or~$c \from v$ in~$\edgebnd(B)$.  Then, existing vertices have their colours
modified by applying~$\rho_B$, while internal vertices of~$B$ are given their colour from~$\lambda_B$.

Formally, the product $B_1 \cdot B_2$ of two bags $B_1$ and $B_2$ of order~$k$ is defined as follows.

\begin{enumerate}
  \item The internal vertices are~$V(B_1\cdot B_2) \eqdef V(B_1) \uplus V(B_2)$.
    Internal edges~$\edgeint(B_1\cdot B_2)$ are obtained as follows:
    Inside~$V(B_1)$ and~$V(B_2)$ respectively, edges are exactly as in~$\edgeint(B_1)$ and~$\edgeint(B_2)$.
    Between them, for~$x \in V(B_1)$ with colour $c \eqdef \lambda_{B_1}(x)$ and~$y \in V(B_2)$,
    there is an edge~$x \to y$ (resp.~$x \from y$) if and only if
    $c \to y$ (resp.~$c \from y$) is a boundary edge in $\edgebnd(B_2)$.
  \item For~$i \in [k]$ an input vertex and~$x \in V(B_2)$,
    there is a boundary edge~$i \to x$ (resp. $x\to i$) in $\edgebnd(B_1\cdot B_2)$ if and only if there is the edge $\rho_{B_1}(i) \to x$ (resp. $x\to \rho_{B_1}(i)$) in $\edgebnd(B_2)$.
    When~$x \in V(B_1)$, the edge between~$i$ and~$x$ is simply the same as in~$\edgebnd(B_1)$.
  \item The colouring~$\lambda_{B_1\cdot B_2}$ of internal vertices coincides
    with $\rho_{B_2} \circ \lambda_{B_1}$ inside~$V(B_1)$, and simply with~$\lambda_{B_2}$ inside~$V(B_2)$.
  \item Finally, the recolouring function is~$\rho_{B_2} \circ \rho_{B_1}$.
\end{enumerate}
In this product, we think of~$B_1$ as being \emph{to the left of}, or \emph{earlier} than~$B_2$.
See \cref{fig:product} for an example.
\begin{figure}[htb]
  \begin{center}
    \begin{tikzpicture}
      \foreach \i in {1,2}{
        \node[fakevertex] (i\i) [label=left:$\i$] at (0,0.7*\i) {};
        \node[fakevertex] (m\i) at (2.5,0.7*\i) {};
        \node[fakevertex] (o\i) [label=right:$\i$] at (5,0.7*\i) {};
      }
      \node[vertex] (x) [label=below right:$x$] at (1.25,0.3) {};
      \node[vertex] (y) [label=below right:$y$] at (1.25,-0.7) {};
      \node[vertex] (z) [label=below:$z$] at (3.75,0) {};

      \draw (-0.15,0.5) rectangle (0.15,1.6);
      \draw (2.35,0.5) rectangle (2.65,1.6);
      \draw (4.85,0.5) rectangle (5.15,1.6);

      \foreach \s/\t in {i2/m1,i1/m1,x/m2,y/m1,m1/o2,m2/o1,z/o1}{
        \draw[colormap] (\s) -- (\t);
      }
      \foreach \s/\t in {x/i1,i2/x,i1/y,i2/y,x/y,z/m1,m2/z}{
        \draw[directed] (\s) -- (\t);
      }

      \node at (6.25,1.05) {$=$};

      \begin{scope}[xshift=7.5cm]
      \foreach \i in {1,2}{
        \node[fakevertex] (i\i) [label=left:$\i$] at (0,0.7*\i) {};
        \node[fakevertex] (o\i) [label=right:$\i$] at (4,0.7*\i) {};
      }
      \node[vertex] (x) [label=below right:$x$] at (1.2,0) {};
      \node[vertex] (y) [label=below right:$y$] at (1.2,-1) {};
      \node[vertex] (z) [label=below right:$z$] at (2.8,0) {};

      \draw (-0.15,0.5) rectangle (0.15,1.6);
      \draw (3.85,0.5) rectangle (4.15,1.6);

      \foreach \s/\t in {i1/o2,i2/o2,x/o1,y/o2,z/o1}{
        \draw[dotted,semithick,-{>[length=1.5mm,width=1.5mm]}] (\s) -- (\t);
      }
      \foreach \s/\t in {x/i1,i2/x,i1/y,i2/y,z/i1,z/i2,x/y,x/z,z/y}{
        \draw[directed] (\s) -- (\t);
      }
      \end{scope}
    \end{tikzpicture}
  \end{center}
  \caption{%
    Product of two bags. The two bags are drawn on the left,
    with the output vertices of the first identified with the input of the second.
    The result is on the right.
  }
  \label{fig:product}
\end{figure}
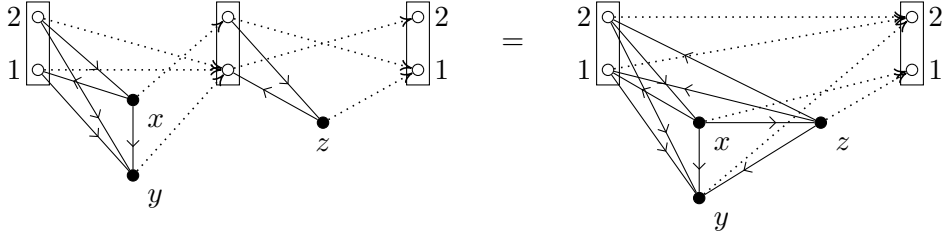

Note that we only define the product of two bags of the same order,
although it would be simple to lift this restriction by extending the set of colours of one bag.
It is easy to check that the product is associative,
and the empty bag with identity recolouring is the neutral element.
Thus, the set of bags of order~$k$ has a monoid structure. We often write $AB$ instead of~$A\cdot B$ when it is clear from the context.

A bag is called \emph{atomic} if it has at most one internal vertex.  We denote by~$\mathfrak{C}_k$ the submonoid generated by atomic bags of order~$k$.

\begin{lemma}\label{lem:monoid-nlcwidth} If a tournament $T$ has linear NLC-width at most $k$, then $T$ is
  isomorphic to the internal tournament of some $B\in \mathfrak{C}_k$. Conversely, if $T$ is isomorphic to the internal tournament of some $B\in
  \mathfrak{C}_k$, then $T$ has linear NLC-width at most $k+1$.
\end{lemma}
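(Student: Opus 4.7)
The plan is to set up a direct translation in both directions between linear NLC decompositions and products of atomic bags, matching operations one-to-one.

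For the forward implication, I would take a linear NLC decomposition of $T$ of width $k$ and turn each operation into an atomic bag of order $k$: a vertex-addition step with subset $X \subseteq [k]$ and colour $c \in [k]$ becomes an atomic bag whose unique internal vertex $v$ has $\lambda(v) = c$, identity recolouring, and boundary edges chosen so that $\ell \to v$ holds exactly when $\ell \in X$; a relabelling step with function $\rho$ becomes a vertex-free atomic bag with recolouring $\rho$ (and identity colouring). A straightforward induction on the length of the decomposition, using the product definition, shows that the product of these atomic bags has internal tournament $T$ (with the labelling obtained at the end of the decomposition as $\lambda_B$), hence $T$ is isomorphic to the internal tournament of some $B \in \mathfrak{C}_k$.

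For the converse, the naive inverse translation fails because of an asymmetry in the definition of the product: an atomic bag with non-trivial recolouring $\rho$ does \emph{not} apply $\rho$ to its newly added internal vertex (which keeps colour $\lambda_B(v) = c$), whereas an NLC relabelling step is applied to all current vertices, including ones added moments before. I would resolve this by spending one extra colour. Given a factorisation $B = A_1 \cdots A_n$ with each $A_i$ atomic of order $k$, I simulate each $A_i$ by NLC operations on $k+1$ colours as follows:
\begin{itemize}
  \item If $A_i$ adds a vertex $v$ with $\lambda_{A_i}(v) = c$ and has recolouring $\rho$, first perform an NLC vertex-addition of $v$ with colour $k+1$ and with $X \subseteq [k]$ read off from $\edgebnd(A_i)$, then apply the NLC relabelling $\tilde{\rho} : [k+1] \to [k+1]$ defined by $\tilde{\rho}|_{[k]} = \rho$ and $\tilde{\rho}(k+1) = c$.
  \item If $A_i$ has no internal vertex and recolouring $\rho$, simply apply the relabelling by any extension of $\rho$ to $[k+1]$.
\end{itemize}
The invariant that before each atomic step no existing vertex carries colour $k+1$ is preserved: the fresh colour is only ever used transiently by the newly added vertex, and is immediately overwritten by the subsequent relabelling. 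This invariant guarantees that when the next atomic bag inserts its boundary edges (which are read off from the colour a vertex had before the current step's recolouring), they match exactly the edges produced by the atomic-bag product. A short induction confirms that the resulting $(k+1)$-labelled tournament coincides with the internal tournament of $A_1 \cdots A_n$, establishing $\lnlcw(T) \le k+1$.

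I do not expect any genuine obstacle; the content of the lemma is really that atomic bags and NLC primitives represent the same operations up to the off-by-one colour. The only thing requiring care is the bookkeeping of when the recolouring is applied relative to the vertex addition, which is exactly what forces the $+1$ in the converse bound.
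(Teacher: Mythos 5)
Your proof is correct and follows essentially the same route as the paper: a one-to-one translation between NLC primitives and atomic bags for the forward direction, and for the converse the same trick of introducing each new vertex with the spare colour $k+1$ and then relabelling, forced by exactly the asymmetry you identify (recolouring does not apply to a bag's own internal vertex). Your write-up is in fact slightly more careful than the paper's, since you state the boundary edge direction correctly ($\ell \to v$ when $\ell \in X$, not $v \to \ell$) and explicitly handle vertex-free atomic bags in the converse, both of which the paper glosses over.
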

\begin{proof}
  In a linear NLC decomposition, a vertex-addition operation with parameters $X \subseteq [k]$ and $c \in [k]$ is represented by the atomic bag $B(X,c)$
  with a single internal vertex~$v$, boundary edges $v \to i$ if and only if $i\in X$, identity recolouring function, and~$c$ as colour for~$v$.
  The operation of recolouring by the function $\rho : [k] \to [k]$ is represented by the atomic bag $B(\rho)$ with recolouring map~$\rho$ and empty internal tournament (and thus empty boundary tournament and trivial colouring).
  Given a linear NCL-decomposition of~$T$ with width~$k$, then it is routine to prove that~$T$ is
  the internal tournament of the bag obtained as products of atomic bags of the form $B(X,c)$ and $B(\rho)$ obtained from this NLC decomposition.

  Conversely, let~$T$ be the internal tournament of the product $B_1\cdot B_2 \cdot \ldots \cdot B_n$
  where each~$B_i$ is an atomic bag with interval vertex~$v_i$.
  Then~$T$ with vertices labelled by~$\lambda_{B_1\cdot \ldots \cdot B_n}$ can be constructed by a linear NLC decomposition of width~$k+1$, by induction on~$n$.
  Assuming the internal tournament of $B_1 \cdot \ldots \cdot B_{n-1}$ has been constructed in this way,
  we add~$v_{n}$ with the unused colour~$k+1$ and connect it to the existing vertices of colour~$i$ according to the edge between~$i$ and~$v$ in~$B_n$.
  This is a vertex-addition operation in the sense of NLC decomposition.
  Finally, we use a recolouring operation, applying the recolouring function $\rho_{B_n}$ to colours in~$[k]$, and mapping~$k+1$ to the colour of~$v$ in~$B_n$.
\end{proof}

See \cref{fig:decomposition} for some examples. Thanks to \cref{lem:monoid-nlcwidth}, we call by extension any word $B_1\cdot B_2\cdot \ldots \cdot B_n$ of atomic bags a linear decomposition.

\begin{figure}[tp]
  \centering
  \subcaptionbox{%
    Inverted path: all edges are left-to-right except~$i \from i+1$.
  }[.4\textwidth]{
    \begin{tikzpicture}
      \foreach \i in {1,...,5}{
        \node[vertex] (\i) [label=below:$\i$] at (\i,0) {};
      }
      \foreach \i in {1,...,4}{
        \pgfmathtruncatemacro{\j}{\i+1}
        \draw[directed] (\j) -- (\i);
      }
      \foreach \i in {1,...,3}{
        \foreach \j in {3,...,5}{
          \pgfmathtruncatemacro{\k}{\i+1}
          \ifnum \k<\j
            \draw (\i) edge[directed,bend left=30] (\j);
          \fi
        }
      }
    \end{tikzpicture}
  }
  \hfill
  \subcaptionbox{%
    Decomposition of the inverted path.
  }{
    \begin{tikzpicture}[scale=0.6]
      \foreach \i in {0,...,5}{
        \node[fakevertex] (i\i1) at (2*\i,0) {};
        \node[fakevertex] (i\i2) at (2*\i,1) {};
        \draw (2*\i-0.22,-0.33) rectangle (2*\i+0.22,1.33);
      }
      \foreach \i in {1,...,5}{
        \node[vertex] (\i) [label=below:$\i$] at (2*\i-1,-0.5) {};
        \pgfmathtruncatemacro{\j}{\i-1}
        \draw[colormap] (\i) -- (i\i1);
        \draw[colormap] (i\j1) -- (i\i2);
        \draw[colormap] (i\j2) -- (i\i2);
        \draw[directed] (\i) -- (i\j1);
        \draw[directed] (i\j2) -- (\i);
      }
    \end{tikzpicture}
  }

  \subcaptionbox{%
    Rotating tournament: vertices are on a circle with arcs oriented clockwise.
  }[.42\textwidth]{
    \begin{tikzpicture}
      \foreach \i in {1,...,7}{
        \pgfmathtruncatemacro{\a}{360/7*(1-\i)+90}
        \node[vertex] (\i) [label=\a:$\i$] at (\a:1) {};
      }
      \foreach \i/\j in {1/2,2/3,3/4,4/5,5/6,6/7,7/1,1/3,2/4,3/5,4/6,5/7,6/1,7/2,1/4,2/5,3/6,4/7,5/1,6/2,7/3}{
        \draw[directed] (\i) -- (\j);
      }
    \end{tikzpicture}
  }
  \hfill
  \subcaptionbox{%
    Rotating tournament drawn left-to-right following the order of a linear decomposition of width~2.
  }[.5\textwidth]{
    \begin{tikzpicture}[xscale=1.5]
      \foreach \i in {1,...,4}{
        \node[vertex] (\i) [label=below:$\i$] at (\i,-0.5) {};
      }
      \foreach \i in {5,...,7}{
        \node[vertex] (\i) [label=above:$\i$] at (\i-3.5,0.5) {};
      }

      \foreach \i/\j in {1/2,2/3,3/4,4/5,5/6,6/7,7/1,3/5,4/6,6/1,7/2,2/5,3/6,4/7,5/1,6/2,7/3}{
        \draw[directed] (\i) -- (\j);
      }
      \draw (1) edge[directed, bend right=10] (3);
      \draw (1) edge[directed, bend right=10] (4);
      \draw (2) edge[directed, bend right=10] (4);
      \draw (5) edge[directed, bend left=10] (7);
    \end{tikzpicture}
  }

  \subcaptionbox{%
    Decomposition of the rotating tournament.
  }{
    \begin{tikzpicture}[scale=0.7]
      \foreach \i in {0,...,7}{
        \node[fakevertex] (i\i1) at (2*\i,0) {};
        \node[fakevertex] (i\i2) at (2*\i,1) {};
        \draw (2*\i-0.2,-0.3) rectangle (2*\i+0.2,1.3);
      }
      \foreach \i in {1,...,4}{
        \node[vertex] (\i) [label=below:$\i$] at (4*\i-3,-0.5) {};
        \pgfmathtruncatemacro{\n}{2*\i-1}
        \pgfmathtruncatemacro{\p}{2*\i-2}
        \draw[colormap] (\i) -- (i\n1);
        \draw[directed] (i\p1) -- (\i);
        \draw[directed] (\i) -- (i\p2);
      }
      \foreach \i in {5,...,7}{
        \node[vertex] (\i) [label=above:$\i$] at (4*\i-17,1.5) {};
        \pgfmathtruncatemacro{\n}{2*\i-8}
        \pgfmathtruncatemacro{\p}{2*\i-9}
        \draw[colormap] (\i) -- (i\n2);
        \draw[directed] (i\p2) -- (\i);
        \draw[directed] (\i) -- (i\p1);
      }
      \foreach \i in {0,...,6}{
        \pgfmathtruncatemacro{\j}{\i+1}
        \draw[colormap] (i\i1) -- (i\j1);
        \draw[colormap] (i\i2) -- (i\j2);
      }
    \end{tikzpicture}
  }
  \caption{Two examples of linear decompositions.}
  \label{fig:decomposition}
\end{figure}
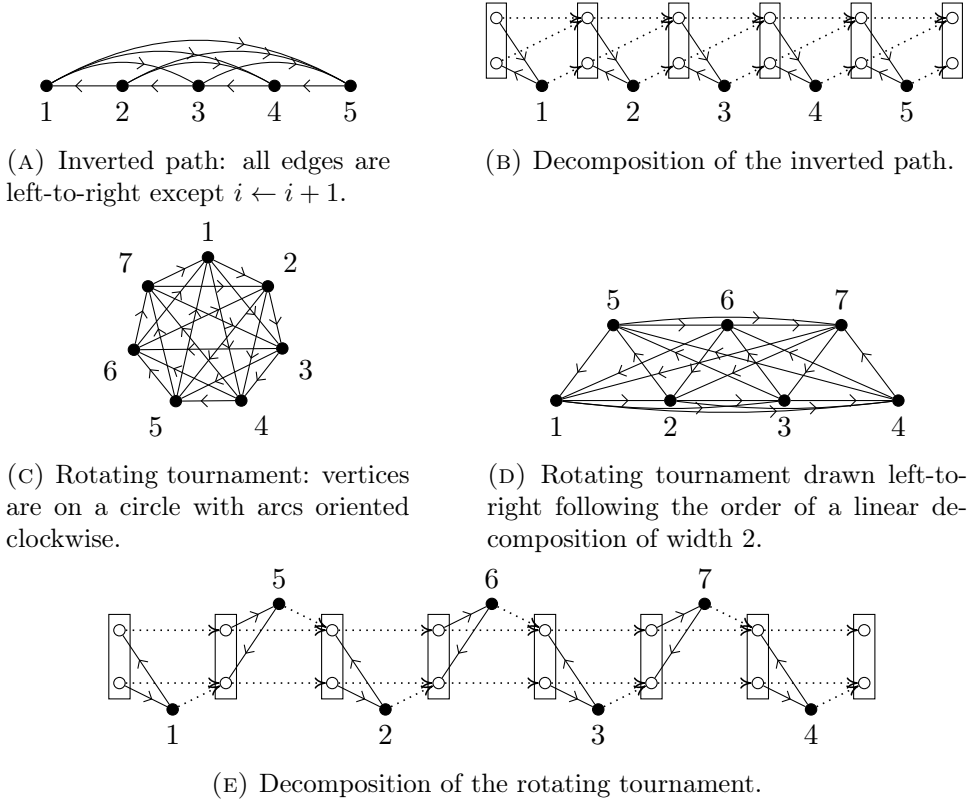

\subsection{Types and abstraction}\label{types and finite presentation}
We next define some finite \emph{abstraction} of bags.
The first step is to define \emph{types} of vertices inside a bag.

Consider a bag~$B$ and an internal vertex~$v \in V(B)$.
The type of~$v$ describes its interactions with bags which may be multiplied to the left or right of~$B$.
Specifically, $\type_B(v)$ consists of the following information:
\begin{enumerate}
 \item the colour~$\lambda_B(v)$, and
 \item the direction of edges $i \to v$ or $i \from v$ for each input vertex~$i \in [k]$.
\end{enumerate}

We use~$\sigma,\tau,\dots$ to denote vertex types.
When the order~$k$ of the bag is fixed, there are only~$k \cdot 2^k$ distinct vertex types.

Remark that the type of a given vertex~$v$ may change when multiplying bags:
given bags $A,B,C$ and~$v \in V(B)$, in general $\type_{B}(v)$ may differ from~$\type_{ABC}(v)$.
However, $\type_{ABC}(v)$ only depends on~$\type_B(v)$ and the recolouring functions~$\rho_A,\rho_C$:
\begin{lemma}
  \label{lem:type-equiv-congruence} Let $k$ be a positive integer. For every two recolouring functions $g:[k]\to [k]$ and $h:[k]\to [k]$, there is a function
  $f_{g,h}$ from vertex types to vertex types such that, for any three bags $A,B,C$ of order $k$  and any $v\in V(B)$,
  \[ \type_{ABC}(v) = f_{\rho_A,\rho_C}(\type_B(v)). \]
\end{lemma}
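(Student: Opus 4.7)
The plan is to unfold the definition of the product~$\cdot$ of bags and verify that every component of~$\type_{ABC}(v)$ can be read off from $\type_B(v)$ together with the recolouring functions~$\rho_A$ and~$\rho_C$. The statement is in fact a direct computation; the only thing to do is to track where the vertex~$v$ sits in the product and apply the rules for colouring and boundary edges step by step.

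First I would compute $\lambda_{ABC}(v)$. Writing $ABC = (AB) \cdot C$, the vertex~$v$ lies in~$V(B) \subseteq V(AB)$, and~$V(AB)$ is the \emph{first} factor of this product. The product rules give $\lambda_{ABC}(v) = \rho_C \circ \lambda_{AB}(v)$. Expanding~$\lambda_{AB}(v)$ using $AB = A \cdot B$, where now~$v$ lies in the \emph{second} factor, yields $\lambda_{AB}(v) = \lambda_B(v)$. Hence $\lambda_{ABC}(v) = \rho_C(\lambda_B(v))$, which depends only on $\lambda_B(v)$, a component of $\type_B(v)$, and on $\rho_C$.

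Second I would compute, for each input vertex $i \in [k]$, the orientation of the boundary edge between~$i$ and~$v$ in $\edgebnd(ABC)$. Viewing $ABC = (AB) \cdot C$, since $v \in V(AB)$, the product rule says that this boundary edge coincides with the one between~$i$ and~$v$ in $\edgebnd(AB)$. Now, viewing $AB = A \cdot B$, since $v \in V(B)$ (the second factor), the product rule gives that this last edge is oriented as the boundary edge between $\rho_A(i)$ and~$v$ in $\edgebnd(B)$. Thus the $i$-th boundary direction of~$v$ in~$ABC$ equals the $\rho_A(i)$-th boundary direction of~$v$ in~$B$, again a component of $\type_B(v)$.

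Combining the two, I would define $f_{g,h}$ explicitly: given a vertex type $\alpha = (c, (\epsilon_i)_{i \in [k]})$ with colour $c \in [k]$ and boundary orientations $\epsilon_i \in \{\to, \from\}$, set
\[ f_{g,h}(\alpha) \eqdef \bigl(h(c),\; (\epsilon_{g(i)})_{i \in [k]}\bigr). \]
The two computations above then show that $\type_{ABC}(v) = f_{\rho_A,\rho_C}(\type_B(v))$, as required. There is no real obstacle: everything reduces to a careful bookkeeping of the product definition, the only subtlety being to apply it in the correct order (first splitting $ABC = (AB) \cdot C$, then $AB = A \cdot B$) so that~$v$ sits in the correct factor at each step.
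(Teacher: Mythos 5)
Your proof is correct and takes essentially the same approach as the paper: both compute $\lambda_{ABC}(v) = \rho_C(\lambda_B(v))$ and show that the boundary edge between $i$ and $v$ in $ABC$ matches that between $\rho_A(i)$ and $v$ in $B$, thereby determining $\type_{ABC}(v)$ from $\type_B(v)$, $\rho_A$, $\rho_C$. The only difference is that you spell out the intermediate split $ABC = (AB)\cdot C$, $AB = A\cdot B$ and give an explicit formula for $f_{g,h}$, which the paper leaves implicit.
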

\begin{proof}
  Applying the definition of product of bags, we have the following:
  The colour of~$v$ in the product~$ABC$ is $\lambda_{ABC}(v) = \rho_C(\lambda_B(v))$.
  For an input vertex~$i \in [K]$, there is an edge $i \to v$ in $\edgebnd(ABC)$ if and only if $\rho_A(i) \to v$ is an edge in~$\edgebnd(B)$.
  The above defines $\type_{ABC}(v)$, and only depends on~$\type_B(v)$, $\rho_A$, and~$\rho_C$.
\end{proof}
In particular, this means that if vertices~$u,v \in V(B)$ have the same type $\type_B(u) = \type_B(v)$ in~$B$, then they also have the same type $\type_{ABC}(u) = \type_{ABC}(v)$ in~$ABC$.

Next, we define the abstraction of a bag.
If~$B$ is a bag and~$\tau$ is a vertex type, we denote by $\tau(B) \eqdef \setst{v \in V(B)}{\type_B(v) = \tau}$ the set of vertices with type~$\tau$ in~$B$.
The type~$\tau$ is \emph{inhabited} in~$B$ if~$\tau(B)$ is non-empty.
The abstraction of~$B$, denoted by~$\abstr{B}$, now consists of the following information:
\begin{enumerate}
 \item the recolouring function~$\rho_B : [k] \to [k]$,
 \item the set of inhabited vertex types in~$B$, and
 \item for each pair of inhabited types~$\sigma,\tau$,
   whether~$\sigma(B)$ and~$\tau(B)$ are homogeneous or not in the internal tournament of~$B$,
   and when they are, the direction of edges $\sigma(B) \to \tau(B)$ or $\sigma(B) \from \tau(B)$.
\end{enumerate}

We use~$\alpha,\beta,\dots$ to denote abstractions of bags.
For fixed order~$k$, the number of bag abstractions is some constant~$2^{2^{O(k)}}$.

A routine check using crucially \cref{lem:type-equiv-congruence} shows that for any bags~$B,B'$ of order~$k$,
$\abstr{B \cdot B'}$ only depends on~$\abstr{B}$ and~$\abstr{B'}$.
This implicitly defines a monoid structure on the set of bag abstractions,
which is a quotient of the monoid of bags.
Equivalently, the map $B \mapsto \abstr{B}$ from bags to abstractions is a monoid homomorphism. We denote by~$M_k$ this monoid of abstractions of bags of order~$k$.
We summarise all these properties in the following.

\begin{lemma}\label{lem:bag-types-hom}
  For any~$k$, the set~$M_k$ of abstractions has size bounded by~$2^{2^{O(k)}}$,
  and there is an associative operation $\tilde{\cdot}$ such that
  \[ \abstr{B_1\cdot B_2}=\abstr{B_1} \tilde{\cdot} \abstr{B_2}. \]
\end{lemma}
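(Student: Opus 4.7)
The plan is to verify the size bound and the existence of the associative operation $\tilde\cdot$ by inspecting each component of the type.

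For the size bound, I unpack the three ingredients of $\type(B)$. There are $k^k = 2^{O(k\log k)}$ possible recolouring functions $\rho_B$. Since a vertex type is a pair of a colour in $[k]$ together with a subset of $[k]$ recording boundary edges, the number of possible vertex types is $k \cdot 2^k$, so the number of sets of inhabited vertex types is $2^{k \cdot 2^k}$. Finally, for each ordered pair of (at most $k\cdot 2^k$) inhabited types we record at most four possibilities (non-homogeneous, or one of two directions, or trivial), which contributes at most $4^{(k\cdot 2^k)^2}$ choices. The dominant term is $2^{2^{O(k)}}$, giving the claimed bound.

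For the monoid structure, I must show each component of $\type(B_1 \cdot B_2)$ is determined by $\type(B_1)$ and $\type(B_2)$. The recolouring map is $\rho_{B_2}\circ\rho_{B_1}$, hence determined by $\rho_{B_1},\rho_{B_2}$. For the inhabited vertex types, I partition $V(B_1\cdot B_2)=V(B_1)\uplus V(B_2)$ and apply \cref{lem:type-equiv-congruence}: for $v\in V(B_1)$ (seen as lying in the middle of a product with an identity bag on the left and $B_2$ on the right), $\type_{B_1\cdot B_2}(v) = f_{\mathrm{id},\rho_{B_2}}(\type_{B_1}(v))$, and symmetrically $\type_{B_1\cdot B_2}(v)=f_{\rho_{B_1},\mathrm{id}}(\type_{B_2}(v))$ for $v\in V(B_2)$. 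Hence the set of inhabited types in $B_1\cdot B_2$ is the image of the inhabited types of $B_1,B_2$ under these two maps, and depends only on $\type(B_1)$ and $\type(B_2)$.

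The most delicate ingredient is the homogeneity information, which is where I expect the main obstacle. Given inhabited types $\alpha,\beta$ in $B_1\cdot B_2$, the set $\alpha(B_1\cdot B_2)$ is a union $\alpha_1\cup\alpha_2$ of preimages from $B_1$ and $B_2$ respectively, and similarly $\beta(B_1\cdot B_2)=\beta_1\cup\beta_2$. Homogeneity of $\alpha(B_1\cdot B_2)$ versus $\beta(B_1\cdot B_2)$ is the conjunction of four sub-homogeneities: the $\alpha_1$-$\beta_1$ and $\alpha_2$-$\beta_2$ edges are internal to $B_1$ and $B_2$ and are read off from $\type(B_1)$ and $\type(B_2)$, while for cross edges between $x\in V(B_1)$ of $B_1$-type $\alpha'$ and $y\in V(B_2)$ of $B_2$-type $\alpha''$, the product rule says the direction only depends on $\lambda_{B_1}(x)$ and the boundary edges of $y$ in $B_2$, both of which are encoded in the respective vertex types. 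Thus all four sub-homogeneities and their directions are determined by $\type(B_1)$ and $\type(B_2)$, and combining them yields the homogeneity entry of $\type(B_1\cdot B_2)$. This defines $\tilde\cdot$ on $M_k$. Associativity is then inherited automatically from associativity of the bag product: applying $\type$ to both sides of $(B_1\cdot B_2)\cdot B_3 = B_1\cdot(B_2\cdot B_3)$ gives $(\type(B_1)\tilde\cdot\type(B_2))\tilde\cdot\type(B_3)=\type(B_1)\tilde\cdot(\type(B_2)\tilde\cdot\type(B_3))$, so $\type$ is a monoid homomorphism onto $M_k$.
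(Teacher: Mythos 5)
Your proof is correct and follows essentially the same approach as the paper: the size bound is obtained by the same component-wise count (and your $k^k$ for the number of recolouring functions is in fact more accurate than the paper's stated $k!$, since a recolouring map $\rho:[k]\to[k]$ need not be a bijection — though either way it is dominated by the other factors), and well-definedness of $\tilde\cdot$ is established by the same three-step analysis: composing recolourings, tracking inhabited types via \cref{lem:type-equiv-congruence}, and a case analysis over the four sub-parts ($B_1$--$B_1$, $B_2$--$B_2$, and the two cross directions) for homogeneity, with associativity inherited from that of the bag product.
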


\begin{proof}
  The number of recolouring functions is~$k!$.
  There are $k\cdot 2^k$ possible vertex types, hence $2^{k\cdot 2^k}$ possibilities for which vertex types are inhabited,
  and $3^{(k\cdot 2^k)^2}$ choices for the information on homogeneity and edge directions between vertex types.
  Thus, the size of $M_k$ is bounded by $2^{2^{O(k)}}$.

  Let us now prove the existence of the product~$\tilde{\cdot}$ for~$M_k$.
  That is, we need to show that $\abstr{B_1 \cdot B_2}$ depends only on~$\abstr{B_1}$ and~$\abstr{B_2}$.
  First, the recolouring function of $B_1\cdot B_2$ is by definition the composition $\rho_{B_2}\circ
  \rho_{B_1}$.
  Next, by \cref{lem:type-equiv-congruence}, there is a map~$f_1$ on vertex types, depending only on~$\rho_{B_2}$, such that $\type_{B_1B_2}(v) = f_1(\type_{B_1}(v))$ for all internal vertices~$v$ of~$B_1$;
  and a similar map~$f_2$ for~$B_2$ depending only on~$\rho_{B_1}$.
  Then a vertex type~$\sigma$ is inhabited in $B_1 \cdot B_2$ if and only if
  some type in~$f_1^{-1}(\sigma)$ or~$f_2^{-1}(\sigma)$ is inhabited in~$B_1$ or~$B_2$ respectively.

  Finally, consider two vertex types~$\sigma,\tau$, for which we want to test homogeneity in~$B_1 \cdot B_2$.
  For $\sigma_1,\tau_1$ inhabited in~$B_1$ with $f_1(\sigma_1) = \sigma$, $f_1(\tau_1) = \tau$,
  we know from~$\abstr{B_1}$ the direction of edges between~$\sigma_1(B_1)$ and~$\tau_1(B_1)$.
  The same holds for types in~$B_2$ mapping to~$\sigma,\tau$ through~$f_2$.
  Next, if~$\sigma_1,\tau_2$ are inhabited in~$B_1$ and~$B_2$ respectively, and $f_1(\sigma_1) = \sigma$, $f_2(\tau_2) = \tau$,
  then the direction of edges between $\sigma_1(B_1)$ and $\tau_2(B_2)$ (which are always homogeneous) is entirely determined by~$\sigma_1$ and~$\tau_2$ themselves.
  The same holds when swapping the roles of~$\sigma$ and~$\tau$.
  To test whether~$\sigma$ and~$\tau$ are homogeneous, it suffices to consider all pairs of inhabited vertex types from combinations of~$B_1,B_2$ as above, and check that the edge direction is either always from~$\sigma$ to~$\tau$, or vice versa.

  We have then shown that one can determine the abstraction of $B_1\cdot B_2$
  from the abstractions of $B_1$ and $B_2$, \ie there is an operation $\tilde{\cdot}$ such that $\abstr{B_1\cdot B_2}=\abstr{B_1}\tilde{\cdot} \abstr{B_2}$.
  Because $\cdot$ is associative, $\tilde{\cdot}$ must also be associative.
\end{proof}

\subsection{Simon's Factorisation Forest Theorem}
\label{subsection: simon}
Given a linear decomposition $B = B_1B_2 \dots B_n$,
Simon's theorem gives a \emph{factorisation forest},
which is a more structured way to construct~$B$ from~$B_1,\dots,B_n$,
subject to some restrictions relative to a finite semigroup~$S$ of our choice---in our case the monoid of abstractions.

The general setting of Simon's theorem is the following.
Consider a (usually infinite) semigroup~$\Sigma$ and a generating set~$A \subset \Sigma$,
a finite semigroup~$S$, and a homomorphism $\phi : \Sigma \to S$.
In our case, $\Sigma$ are the bags, $A$ the atomic bags, $S$ the abstractions, and $\phi(B) = \abstr{B}$.
Simon's theorem deals with two kinds of factorisations of elements $w \in \Sigma$:
\begin{description}
  \item[binary factorisation] $w = w_1\cdot w_2$ for $w_1,w_2 \in \Sigma$ with no further restriction.
  \item[idempotent factorisation] $w = w_1 \cdot \ldots \cdot w_n$, where the number~$n$ of factors is unrestricted,
    but the factors must all map to the same $e = \phi(w_i)$ in~$S$,
    which furthermore must be idempotent, meaning $e \cdot e = e$.
\end{description}
Simon's theorem states that any $w \in \Sigma$ can be reduced down to generators in~$A$
by applying the previous two operations nested only up to depth bounded by a linear function of~$\card{S}$.
This process is described as a \emph{factorisation forest} for~$w$, see \cref{fig:factorisation} for an illustration.

Let us introduce some terminology allowing to count binary and idempotent factorisations separately.
A factorisation forest has depth~$(p,q)$ if the nesting depth of idempotent operations is at most~$p$,
and that of binary operations is at most~$q$. Formally:
\begin{itemize}
  \item Generators $a \in A$ have a factorisation forest of depth~$(0,0)$.
  \item If~$w_1,w_2$ have factorisation forests of depth~$(p_1,q_1)$ and~$(p_2,q_2)$ respectively,
    then~$w_1 \cdot w_2$ has a factorisation forest of depth
    \[ (\max\{p_1,p_2\}, \max\{q_1,q_2\} + 1). \]
  \item If~$w_1,\dots,w_n \in \Sigma$ satisfy $\phi(w_i) = e$ for some idempotent~$e \in S$ and all~$1\leq i\leq n$,
    and there are~$p,q \in \Nn$ such that each~$w_i$ has a factorisation forest of depth~$(p_i,q_i)$,
  then~$w_1 \cdot \ldots \cdot w_n$ has a factorisation forest of depth~$(\max \{p_1,\dots,p_n\}+1,\max \{q_1,\dots,q_n\})$.
\end{itemize}
Since this definition depends on the choice of semigroup~$S$ and morphism~$\phi$,
we will call it a \emph{factorisation forest over~$\phi$} to disambiguate.

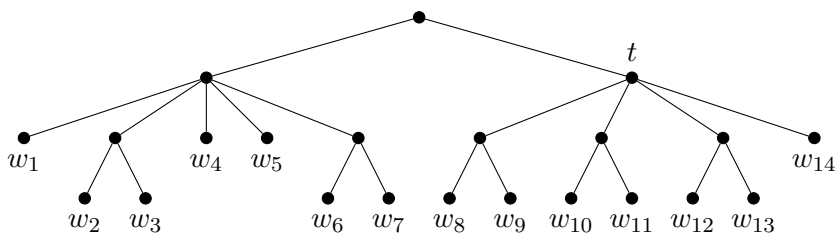
\begin{figure}[ht]
  \centering
  \begin{tikzpicture}[scale=0.8]
    \node[vertex] (r) at (-0.5,0) {};
    \node[vertex] (x) at (-4,-1) {};
    \node[vertex] (x1) [label=below:$w_1$] at (-7,-2) {};
    \node[vertex] (x2) at (-5.5,-2) {};
    \node[vertex] (x21) [label=below:$w_2$] at (-6,-3) {};
    \node[vertex] (x22) [label=below:$w_3$] at (-5,-3) {};
    \node[vertex] (x3) [label=below:$w_4$] at (-4,-2) {};
    \node[vertex] (x4) [label=below:$w_5$] at (-3,-2) {};
    \node[vertex] (x5) at (-1.5,-2) {};
    \node[vertex] (x51) [label=below:$w_6$] at (-2,-3) {};
    \node[vertex] (x52) [label=below:$w_7$] at (-1,-3) {};

    \node[vertex] (y) [label=above:$t$] at (3,-1) {};
    \node[vertex] (y1) at (0.5,-2) {};
    \node[vertex] (y11) [label=below:$w_8$] at (0,-3) {};
    \node[vertex] (y12) [label=below:$w_9$] at (1,-3) {};
    \node[vertex] (y2) at (2.5,-2) {};
    \node[vertex] (y21) [label=below:$w_{10}$] at (2,-3) {};
    \node[vertex] (y22) [label=below:$w_{11}$] at (3,-3) {};
    \node[vertex] (y3) at (4.5,-2) {};
    \node[vertex] (y31) [label=below:$w_{12}$] at (4,-3) {};
    \node[vertex] (y32) [label=below:$w_{13}$] at (5,-3) {};
    \node[vertex] (y4) [label=below:$w_{14}$] at (6,-2) {};

    \foreach \x/\y in {r/x,x/x1,x/x2,x2/x21,x2/x22,x/x3,x/x4,x/x5,x5/x51,x5/x52,r/y,y/y1,y1/y11,y1/y12,y/y2,y2/y21,y2/y22,y/y3,y3/y31,y3/y32,y/y4}{
      \draw (\x) -- (\y);
    }
  \end{tikzpicture}
  \caption{%
    Simon's factorisation of a word $w_1 \dots w_{14}$ represented at a tree.
    At the leaves are letters~$w_i$ in the chosen generating set~$A$.
    Each internal node corresponds to a binary or idempotent operation.
    E.g.\ since~$t$ is not binary, it must be idempotent:
    $\phi(w_8w_9) = \phi(w_{10}w_{11}) = \phi(w_{12}w_{13}) = \phi(w_{14}) = e$,
    and further this~$e$ must satisfy $e \cdot e = e$.
  }
  \label{fig:factorisation}
\end{figure}

Simon's theorem can now be precisely stated as follows.
\begin{theorem}[Simon's Factorisation Forest Theorem, \cite{simon1990factorization,kufleitner2008height}]
  \label{thm:simon}
  Consider a semigroup~$\Sigma$ generated by~$A \subset \Sigma$, and a homomorphism $\phi : \Sigma \to S$ to a finite semigroup.
  Then all~$w \in \Sigma$ have factorisation forests over~$\phi$ of depth at most~$(\card{S},2\card{S})$.
\end{theorem}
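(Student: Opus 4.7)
The plan is to prove Simon's theorem by strong induction on~$\card{S}$. The base case~$\card{S}=1$ is trivial: the unique element~$e \in S$ is necessarily idempotent, so any word $w = a_1 \cdots a_n$ admits the single idempotent factorisation $w = a_1 \cdot \ldots \cdot a_n$ of depth~$(1,0)$.

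For the inductive step, the key structural observation is that when~$w$ is long enough, some portion of~$w$ must have its image concentrated in a proper sub-semigroup of~$S$. Concretely, I would examine the sequence of prefix images $\phi(a_1 \cdots a_i)$ for $i = 1,\ldots,n$. If two such prefixes coincide, say $\phi(a_1\cdots a_i) = \phi(a_1 \cdots a_j) = s$ for~$i<j$, then the infix $u = a_{i+1} \cdots a_j$ satisfies $s \cdot \phi(u) = s$, forcing~$\phi(u)$ into the right stabiliser of~$s$, which is a proper sub-semigroup of~$S$ whenever~$s$ is not idempotent. Iterating this pigeonhole argument, one partitions~$w$ into a bounded number of blocks, each of which either lies in a proper sub-semigroup (to which induction applies directly) or can be grouped with neighbouring blocks into an idempotent factorisation at some idempotent~$e$ singled out by the argument.

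The main obstacle is obtaining the tight bound~$(\card{S}, 2\card{S})$ on the depth. The idempotent nesting grows by at most one per recursive descent into a proper sub-semigroup of strictly smaller size, so it remains bounded by~$\card{S}$. The binary depth accounts for the transitions between blocks and for the leftover prefix/suffix material at each recursive step; since each step introduces at most two binary combinations (one on each side of an identified idempotent block), the binary depth stays at most~$2\card{S}$. Formalising this carefully---especially the simultaneous control of left and right factorisations---requires working with Green's~$\mathcal{R}$- and~$\mathcal{L}$-relations on~$S$, as done in the proof of~\cite{kufleitner2008height} which achieves the tight linear bound, improving on Simon's original polynomial argument in~\cite{simon1990factorization}.
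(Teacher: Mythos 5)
The paper does not prove \cref{thm:simon}: it is imported as a black-box result, attributed to Simon~\cite{simon1990factorization} for the original (polynomial-height) statement and to Kufleitner~\cite{kufleitner2008height} for the linear bound, with only the remark that Kufleitner's proof uses at most $\card{S}$ idempotent levels and $2\card{S}$ binary levels. There is therefore no internal proof against which to compare your sketch; the pragmatic reading of the task is that a citation, not a proof, is what the paper supplies.

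As for the sketch itself: it gestures at the right general ideas (pigeonhole on partial products, recursion into proper sub-semigroups, Green's relations for the depth bound), but it is not a proof. Your observation that coinciding prefix products $\phi(a_1\cdots a_i)=\phi(a_1\cdots a_j)=s$ force $\phi(a_{i+1}\cdots a_j)$ into the right stabiliser $\{t : st = s\}$, and that this stabiliser is a proper sub-semigroup when~$s$ is not idempotent (since $s\notin\{t:st=s\}$ exactly when $ss\ne s$), is correct. However the crucial case --- what to do when~$s$ \emph{is} idempotent, which is precisely where one should be constructing an idempotent node of the forest --- is left unaddressed, and ``iterating the pigeonhole'' on prefixes does not by itself cut $w$ into a bounded number of blocks whose images land in \emph{strictly smaller} sub-semigroups; without such a decrease the induction on $\card{S}$ does not close in $\card{S}$ idempotent levels. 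The bound $(\card{S},2\card{S})$ genuinely requires the careful Green-relation bookkeeping of~\cite{kufleitner2008height}, and you defer to it rather than supplying it, so the argument has a gap at exactly the point where the claimed bound would be established.
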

The original proof by Simon~\cite{simon1990factorization} shows that there are factorisation forests of total depth~$9\card{S}$.
Kufleitner~\cite{kufleitner2008height} improved this bound to~$3\card{S}$,
and the proof in fact uses only~$\card{S}$ idempotent factorisations and~$2\card{S}$ binary ones.

In the case of bags, we will apply \cref{thm:simon} to the monoid~$\mathfrak{C}_K$ with atomic bags as generators,
and the homomorphism $\abstr{\cdot} : \mathfrak{C}_K \to M_K$.
Since we will only consider factorisation forests over this homomorphism,
we will omit the `over~$\abstr{\cdot}$' qualifier in the rest of this work.

\subsection{Abstractions and vertex deletion}
The proof of our main result proceeds by induction over the depth of a given factorisation forest.
At some point during this induction, we may need to restrict the bag considered to a subset of vertices.
In general, this does not preserve the factorisation depth. Indeed, deleting vertices in a bag may change its abstraction,
and what was an idempotent factorisation may thus stop being one.
This section describes a sufficient condition to control the depth of factorisation forests when deleting vertices.

Given a bag~$B$ and a subset~$X \subseteq V(G)$ of internal vertices,
we denote by~$B[X]$ the bag obtained by deleting any internal vertex of~$B$ outside~$X$, and restricting the edges and colouring map of~$B$ to~$X$.
If~$\Gamma$ is a set of vertex types, then we denote by
$\Gamma(B) = \bigcup_{\tau \in \Gamma} \tau(B)$ the set of vertices with type in~$\Gamma$,
and we say that~$\Gamma(B)$ is a \emph{union of types}.
When deleting vertices from bags, we will only delete unions of types,
which is sufficient to control the factorisation depth.
\begin{lemma}\label{lem:type-deletion} Let $k$ be a positive integer and let $B$ be a bag in $\mathfrak{C}_k$.
  If~$B$ has a factorisation forest of depth~$(p,q)$ and~$X \subset V(B)$ is some union of types,
  then~$B[X]$ has a factorisation forest of depth~$(p,q+2p)$.
\end{lemma}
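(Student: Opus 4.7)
I would proceed by induction on the idempotent depth $p$, the key tool being the observation that union-of-types restriction is preserved under moving to sub-bags of the factorisation. Concretely, if $B_t$ corresponds to any node of the factorisation forest, with $B = A \cdot B_t \cdot C$, then by \cref{lem:type-equiv-congruence} the map $\type_{B_t}(v) \mapsto \type_B(v)$ depends only on $\rho_A$ and $\rho_C$, not on $v$. Hence $X \cap V(B_t) = \Gamma_t(B_t)$ where $\Gamma_t$ is the preimage of $\Gamma$ under this map (and $X = \Gamma(B)$). This guarantees that whenever I descend into the forest, the inductive hypothesis applies.

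The base case $p = 0$ is immediate: restriction commutes with binary product, $(B_1 \cdot B_2)[X] = B_1[X_1] \cdot B_2[X_2]$, and an atomic bag remains atomic after restriction, so the factorisation forest of $B[X]$ has depth $(0, q)$. For the inductive step with a binary root $B = B_1 \cdot B_2$, I would apply induction to each factor and recombine binarily, gaining one unit of binary depth and preserving idempotent depth.

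The main case, and what I expect to be the main obstacle, is an idempotent root $B = B_1 \cdots B_n$ with $\type(B_i) = e$ for $e$ idempotent. Here the naive approach fails: after restriction, the factors $B_i[X_i]$ need not all have the same type, because the boundary factors $B_1$ and $B_n$ are ``sandwiched'' differently from the interior ones. The crucial calculation is that for $2 \le i \le n-1$, the recolouring functions left and right of $B_i$ inside $B$ are $\rho_e \circ \rho_A$ and $\rho_C \circ \rho_e$ respectively (where $A,C$ are the parts of $B$ outside the idempotent factorisation), using $\rho_e \circ \rho_e = \rho_e$ from idempotence. These are independent of $i$, so the preimage $\Gamma_{\mathrm{int}}$ of $\Gamma$ is the same for every interior $i$, and $\type(B_i[X_i])$ equals a common value $e'$ determined by $e$ and $\Gamma_{\mathrm{int}}$.

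To check $e'$ is idempotent, I apply the same observation to the product $B_i \cdot B_j$ of two interior factors: this product has type $e$ and is sandwiched identically, so $(B_i \cdot B_j)[X \cap V(B_i \cdot B_j)] = B_i[X_i] \cdot B_j[X_j]$ also has type $e'$, giving $e' \,\tilde{\cdot}\, e' = e'$. With this in hand, induction gives factorisation forests of depth $(p-1,\, q + 2(p-1))$ for each $B_i[X_i]$; the interior factors $B_2[X_2], \ldots, B_{n-1}[X_{n-1}]$ assemble into an idempotent factorisation of depth $(p,\, q + 2(p-1))$; and two final binary compositions with $B_1[X_1]$ and $B_n[X_n]$ produce the required depth $(p,\, q + 2p)$ for $B[X]$. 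The small cases $n \le 2$ are handled by treating the root as an extra binary operation, absorbed into the $+2$ slack on the binary depth.
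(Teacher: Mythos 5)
Your proposal is correct and follows the paper's proof in essence: both argue by induction on the factorisation forest that the interior factors of an idempotent node restrict to a common idempotent type $\tau'$ (your $e'$), assemble those restricted interior factors into an idempotent factorisation, and absorb the two boundary factors $B_1[X_1],B_n[X_n]$ with two extra binary operations, giving the $+2p$ slack. The only notable difference is in proving idempotence of $\tau'$: you examine an adjacent pair $B_i\cdot B_{i+1}$ of interior factors from the given factorisation (which implicitly needs $n\ge 4$, with small $n$ handled separately as you note), whereas the paper argues abstractly from four arbitrary bags of type $\tau$ via $\Gamma'$ and \cref{rmk:type-union-preserves-type,rmk:type-union-congruence}, sidestepping the case distinction.
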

Before proving \cref{lem:type-deletion}, let us make two very simple observations.
The first follows directly from the definition of abstractions.
\begin{remark}\label{rmk:type-union-preserves-type}
  If~$\Gamma$ is a set of vertex types, and~$B_1,B_2$ are two bags with $\abstr{B_1} = \abstr{B_2}$, then
  \[ \abstr{B_1[\Gamma(B_1)]} = \abstr{B_2[\Gamma(B_2)]}. \]
\end{remark}

Secondly, recall that the type of a vertex~$x$ changes depending on the bag containing~$x$ considered:
thus in general~$\Gamma(B) \neq \Gamma(ABC) \cap V(B)$.
Nonetheless, \cref{lem:type-equiv-congruence} shows that for bags~$A,B,C$ and~$x \in V(B)$,
$\type_{ABC}(x)$ only depends on~$\type_B(x)$, $\abstr{A}$, and~$\abstr{C}$.
Thus,
\begin{remark}\label{rmk:type-union-congruence}
  For any bags~$A,B,C$ and set of vertex types~$\Gamma$, there exist~$\Gamma'$ such that
  $\Gamma(ABC) \cap V(B) = \Gamma'(B)$.
  Furthermore, $\Gamma'$ only depends on~$\Gamma$, and $\abstr{A},\abstr{C}$.
\end{remark}

\begin{proof}[Proof of \cref{lem:type-deletion}]
  We prove the result by induction on the factorisation forest of~$B$ with depth~$(p,q)$.
  When~$B$ is atomic, the result is trivial.
  In the binary case, assume that~$B = B_1\cdot B_2$ where~$B_1$ and $B_2$ have each a factorisation forest of depth at most~$(p,q-1)$.
  By \cref{rmk:type-union-congruence}, $X_i \eqdef X \cap V(B_i)$ is also a union of types in~$B_i$ for $i \in \{1,2\}$.
  Hence, by induction we find factorisation forests of depth~$(p,q+2p-1)$ for~$B_1[X_1]$ and $B_2[X_2]$,
  which combine into one of depth~$(p,q+2p)$ for~$B[X]$.

  Consider now the idempotent case~$B = B_1\cdot \ldots \cdot B_n$,
  where all bags~$B_i$ for~$i \in [n]$ have idempotent abstraction~$\alpha$,
  and have factorisation forests of depth at most~$(p-1,q)$.
  Call~$\Gamma$ the set of vertex types defining~$X$, \ie $X = \Gamma(B)$,
  and for each $i \in [n]$, define $X_i \eqdef X \cap V(B_i)$.

  By \cref{rmk:type-union-congruence}, there is a set of vertex types~$\Gamma'$ depending only on~$\Gamma$ and~$\alpha$,
  such that for all bags~$A_1,A_2,A_3$ of abstraction~$\alpha$ we have
  \begin{equation}\label{eq:gamma-prime}
    \Gamma(A_1A_2A_3) \cap V(A_2) = \Gamma'(A_2).
  \end{equation}
  Then, by \cref{rmk:type-union-preserves-type}, there is an abstraction~$\alpha'$, depending again only on~$\Gamma$ and~$\alpha$,
  such that the abstraction of $A_2[\Gamma'(A_2)]$ is~$\alpha'$.
  \begin{claim}\label{claim:type}
    For~$2 \le i \le n-1$, the abstraction of the restriction $B_i[X_i]$ is~$\alpha'$.
  \end{claim}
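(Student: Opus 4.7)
The plan is to prove the claim by identifying the left and right contexts of $B_i$ inside the product $B_1 \cdots B_n$ and then directly applying the two remarks together with the idempotence of $\tau$.

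First, for each $i$ with $2 \le i \le n-1$, I would set $L_i \eqdef B_1 \cdots B_{i-1}$ and $R_i \eqdef B_{i+1} \cdots B_n$, so that $B = L_i \cdot B_i \cdot R_i$. Since $\type$ is a monoid homomorphism and each $B_j$ has type $\tau$, we have $\type(L_i) = \tau^{i-1}$ and $\type(R_i) = \tau^{n-i}$. The hypothesis $2 \le i \le n-1$ guarantees both exponents are at least~$1$, and since $\tau$ is idempotent ($\tau \tilde\cdot \tau = \tau$), induction on the exponent gives $\type(L_i) = \type(R_i) = \tau$.

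Next, I would apply \cref{eq:gamma-prime} with $A_1 = L_i$, $A_2 = B_i$, $A_3 = R_i$; all three have type~$\tau$, as required. This yields
\[ X_i = X \cap V(B_i) = \Gamma(B) \cap V(B_i) = \Gamma(L_i \cdot B_i \cdot R_i) \cap V(B_i) = \Gamma'(B_i), \]
so the restriction $B_i[X_i]$ equals $B_i[\Gamma'(B_i)]$. Finally, since $B_i$ has type $\tau$ and $\tau'$ was defined precisely as the type of $A[\Gamma'(A)]$ for any bag $A$ of type $\tau$, \cref{rmk:type-union-preserves-type} gives $\type(B_i[X_i]) = \tau'$, as desired.

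There is no real obstacle here: the claim is essentially a bookkeeping step that exploits idempotence to ensure the left and right contexts of every interior factor $B_i$ have the same type $\tau$, so that the uniform choice of $\Gamma'$ and $\tau'$ made just before the claim applies. The only subtle point to flag in the write-up is why the bounds $2 \le i \le n-1$ are needed (namely, to ensure $L_i$ and $R_i$ are non-empty products so that their types are genuinely $\tau$ rather than the identity); the boundary factors $B_1$ and $B_n$ may not satisfy the conclusion, and will presumably be handled separately when assembling the factorisation forest of $B[X]$ in the remainder of the proof of \cref{lem:type-deletion}.
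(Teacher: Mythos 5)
Your proof is correct and takes exactly the same route as the paper: set $L_i = B_1 \cdots B_{i-1}$ and $R_i = B_{i+1} \cdots B_n$, observe by idempotence that both have type~$\tau$, apply \cref{eq:gamma-prime} to get $X_i = \Gamma'(B_i)$, and conclude by the definition of~$\tau'$. The paper's proof is terser but identical in substance, and your remark about why $2 \le i \le n-1$ is needed is the right thing to flag.
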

  \begin{claimproof}
    Immediate by choice of~$\alpha'$ when applying the previous remarks with $A_1 = B_1 \dots B_{i-1}$, $A_2 = B_i$, and $A_3 = B_{i+1} \dots B_n$.
  \end{claimproof}

  \begin{claim}\label{claim:idempotent}
    The abstraction~$\alpha'$ is idempotent.
  \end{claim}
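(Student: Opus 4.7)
The plan is to show $\tau' \mathbin{\tilde\cdot} \tau' = \tau'$ by exhibiting a single pair of bags of type $\tau'$ whose product has type $\tau'$: since $\tilde\cdot$ is well-defined on types by \cref{lem:bag-types-hom}, the identity then lifts to arbitrary bags of type $\tau'$. Recall from \cref{rmk:type-union-preserves-type} that $\tau'$ is exactly the type of $A[\Gamma'(A)]$ for any $A$ of type $\tau$, so the natural candidate bags come from restricting two bags of type $\tau$ to their $\Gamma'$-parts.

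Concretely, pick any two bags $C_1, C_2$ of type $\tau$. By idempotence $C_1 \cdot C_2$ again has type $\tau$, so $(C_1 \cdot C_2)[\Gamma'(C_1 \cdot C_2)]$ has type $\tau'$. The first main step is to establish
\[\Gamma'(C_1 \cdot C_2) = \Gamma'(C_1) \cup \Gamma'(C_2)\]
under the identification $V(C_1 \cdot C_2) = V(C_1) \sqcup V(C_2)$. To that end I would pad with auxiliary bags $A_1, A_3$ of type $\tau$ and apply \cref{eq:gamma-prime} three times to $A_1 \cdot C_1 \cdot C_2 \cdot A_3$: once with middle factor $C_1 \cdot C_2$ and contexts $A_1, A_3$; once with middle factor $C_1$ and contexts $A_1, C_2 \cdot A_3$; and once with middle factor $C_2$ and contexts $A_1 \cdot C_1, A_3$. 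By idempotence, all six contexts have type $\tau$, so all three applications invoke the same $\Gamma'$ (as guaranteed by \cref{rmk:type-union-congruence}). Intersecting the common set $\Gamma(A_1 C_1 C_2 A_3)$ with $V(C_1)$, $V(C_2)$, and $V(C_1 C_2)$ then yields the displayed equality.

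The second step is the distributivity identity
\[(C_1 \cdot C_2)\bigl[\Gamma'(C_1) \cup \Gamma'(C_2)\bigr] = C_1[\Gamma'(C_1)] \cdot C_2[\Gamma'(C_2)],\]
which I would verify by unrolling the definition of the product on bags. Each ingredient on the right-hand side—internal edges inside each $V(C_i)$, cross edges from $V(C_1)$ to $V(C_2)$ (determined by $\lambda_{C_1}$ and the boundary tournament of $C_2$), boundary edges with $[k]$, the colouring of internal vertices, and the recolouring function—references only data local to $C_1$ or $C_2$, so performing the vertex restrictions before or after the product yields the same bag.

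Combining the two steps: the left-hand side of the second identity has type $\tau'$, while the right-hand side is a product of two bags each of type $\tau'$. Therefore $\tau' \mathbin{\tilde\cdot} \tau' = \tau'$, as desired. The only delicate point is the first step: one must be sure that the function $\Gamma'$ furnished by \cref{rmk:type-union-congruence} genuinely depends only on $\Gamma$ and $\tau$, so that the three invocations of \cref{eq:gamma-prime} really produce the \emph{same} $\Gamma'$—and this is exactly what idempotence of $\tau$ buys us, since every context appearing in the padded product reduces to type $\tau$.
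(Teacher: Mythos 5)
Your proof is correct and essentially identical to the paper's: both pad the two bags $C_1, C_2$ with auxiliary bags of type $\tau$ to form a four-fold product, apply \cref{eq:gamma-prime} under the three different groupings to identify $\Gamma'(C_1)$, $\Gamma'(C_2)$, and $\Gamma'(C_1 C_2)$ as the corresponding restrictions of $\Gamma$, and conclude via well-definedness of $\tilde\cdot$ on types. The one point you make explicit that the paper leaves implicit is the distributivity $(C_1 \cdot C_2)[X_1 \cup X_2] = C_1[X_1] \cdot C_2[X_2]$, which is genuinely needed for the final chain of equalities and which your unrolling of the product definition correctly verifies.
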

  \begin{claimproof}
    Consider this time four arbitrary bags~$A_1,\dots,A_4$, each with abstraction~$\alpha$.
    Denote~$Y = \Gamma(A_1 A_2 A_3 A_4)$, and for each $i\in \{1,2,3,4\}$, let $Y_i \eqdef Y \cap V(A_i)$.

    Grouping them as~$A_1 A_2 (A_3 A_4)$ and applying~\eqref{eq:gamma-prime}, we have
    \[ Y_2 = \Gamma(A_1 A_2 A_3 A_4) \cap V(A_2) = \Gamma'(A_2). \]
    By instead grouping the four bags as $(A_1A_2) A_3 A_4$, or~$A_1 (A_2A_3) A_4$,
    we similarly find that $Y_3 = \Gamma'(A_3)$ and $Y_2 \cup Y_3 = \Gamma'(A_2A_3)$.
    Then, by choice of~$\alpha'$, the bags $A_2[Y_2]$, $A_3[Y_3]$, and $(A_2A_3)[Y_2 \cup Y_3]$ all have abstraction~$\alpha'$.
    Therefore,
    \begin{align*}
    \alpha' \tilde{\cdot} \alpha' & = \abstr{A_2[Y_2]} \, \tilde{\cdot} \, \abstr{A_3[Y_3]} \\
                      & = \abstr{(A_2A_3)[Y_2 \cup Y_3]} = \alpha'.
    \end{align*}
    Thus $\alpha'$ is idempotent.
  \end{claimproof}

  To conclude, we apply the induction hypothesis to each~$B_i[X_i]$ for $i \in [n]$, yielding factorisation forests of depth~$(q-1,p+2q-2)$.
  By \cref{claim:type,claim:idempotent}, each term in the product $B_2[X_2] \cdot \ldots \cdot B_{n-1}[X_{n-1}]$ has abstraction~$\alpha'$, which is idempotent.
  This is thus an idempotent factorisation of depth~$(q,p+2q-2)$.
  With two additional binary factorisations to add~$B_1[X_1]$ and~$B_n[X_n]$,
  we obtain a factorisation forest of depth~$(q,p+2q)$ for $B_1[X_1]\cdot  \ldots \cdot B_n[X_n] = (B_1 \dots B_n)[X]$.
\end{proof}

\section{Transducing an ordering}
\label{sec:main}
In this section, we show that orderings of bounded cut-rank can be \fo transduced.
Throughout the section, we work with (non-deterministic) extension transductions~$\Phi$ with signature $\{E\} \to \{E,<\}$, where~$E,<$ are binary relational symbols.
Explicitly, such a transduction takes as input a (directed) graph~$G = (V,E)$, and non-deterministically outputs structures $(V,E,<)$,
i.e.\ the same graph~$(V,E)$ plus some binary relation~$<$ expected to be a linear ordering.
\begin{theorem}
  \label{thm:transduce-ordering}
  For any~$k \in \Nn$, one can compute an extension \fo transduction $\Phi_k$ with signature $\{E\} \to \{E,<\}$,
  and~$k' \in \Nn$ such that
  \begin{enumerate}
    \item for any graph~$G=(V,E)$, every structure in~$\Phi_k(G)$ is~$(V,E,<)$ for some linear ordering~$<$ of~$G$ with cut-rank at most~$k'$, and
    \item if~$T$ is a tournament with linear NLC-width at most~$k$, then $\Phi_k(T)$ contains at least one such structure.
  \end{enumerate}
\end{theorem}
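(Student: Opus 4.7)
My plan is to proceed by induction on the depth of a factorisation forest. By Lemma 2.3, a tournament $T$ of linear NLC-width at most $k$ arises as the internal tournament of some bag $B \in \mathfrak{C}_k$. Applying Simon's Factorisation Forest Theorem (Theorem 3.3) to the type morphism $\type : \mathfrak{C}_k \to M_k$, I obtain a factorisation forest of $B$ of depth $(p, q)$ bounded by $(|M_k|, 2|M_k|)$, a constant depending only on $k$. I will construct transductions $\Phi_{p,q}$ that, given a bag with such a factorisation, non-deterministically produce an ordering of cut-rank bounded by some $k'(p,q)$; setting $\Phi_k = \Phi_{|M_k|,\,2|M_k|}$ yields the theorem. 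The base case $(0,0)$ is trivial since atomic bags have at most one internal vertex.

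For the binary case $B = B_1 \cdot B_2$ with each $B_i$ of depth $(p, q-1)$, I would non-deterministically color the vertices to guess the partition $V(B_1) \uplus V(B_2)$, filter for consistency (so the guess corresponds to a valid splitting of $B$ in $\mathfrak{C}_k$), then apply the Parallel Application Lemma (Lemma 2.6) to recursively transduce orderings of each side via $\Phi_{p, q-1}$. The output ordering places $V(B_1)$ entirely before $V(B_2)$. Lemma 2.4 bounds its cut-rank, as the central cut has rank at most $k$ (since $B_2$ has order $k$) and each side has cut-rank bounded by induction.

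The idempotent case $B = B_1 \cdots B_n$, with all $B_i$ of the same idempotent type $\tau$, is the main obstacle. The key observation is that idempotency of $\tau$ forces a uniform composition behaviour: for vertices $u \in V(B_i)$ and $v \in V(B_j)$ lying in sufficiently separated factors, the direction of the edge between $u$ and $v$ in $B$ is determined entirely by $\type_{B_i}(u)$, $\type_{B_j}(v)$, and the relative order of $i$ and $j$, since $\rho_\tau \circ \rho_\tau = \rho_\tau$. I would non-deterministically color each vertex with its type in its own factor (finitely many colors, since vertex types are bounded in number), then write an FO formula that reads off the cross-factor ordering from edge directions for type pairs where this information is recoverable (i.e.\ where the determined direction differs depending on which of $i,j$ is smaller). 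Within-factor orderings are produced by parallel recursion at depth $(p-1, q)$. For vertex-type pairs whose edges do not reveal the factor order, and for vertices co-located in a single factor, I would invoke Lemma 3.4 (type-deletion) to restrict $B$ to the relevant union of types, obtaining a sub-bag with controlled factorisation depth on which an inner induction proceeds.

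The main difficulty will lie in orchestrating the idempotent case: identifying factor membership with only a bounded coloring despite $n$ being arbitrary, handling the subtleties of adjacent factors (where $\rho_\tau$ has not yet been applied and colour information behaves differently), and combining the several sub-orderings (cross-factor, within-factor, and within-type-union) into a single ordering whose cut-rank remains bounded in terms of $k$ alone. The last point will be handled by iterated applications of Lemma 2.4, feeding the recursively obtained bounds into the intervals of the global ordering.
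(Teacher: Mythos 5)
Your outer structure is correct and matches the paper: induction over the depth $(p,q)$ of a factorisation forest, a trivial base case for atomic bags, the binary case via guessing the bipartition and appealing to the Parallel Application Lemma together with the analogue of \cref{lem:lexico-rankwidth}, and the idempotent case as the crux. You also correctly identify the key preliminary fact that idempotence of $\rho$ forces the edge direction between vertices in sufficiently separated factors to depend only on their vertex types (cf.\ \cref{lem:far-vertices-type}). However, the heart of the idempotent case is left essentially unresolved in your sketch, and the paper's resolution contains ideas that your proposal does not reach.

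Concretely, there is a genuine gap where you write that you would order ``type pairs where this information is recoverable'' and otherwise ``invoke type-deletion to restrict to the relevant union of types.'' The correct criterion for recoverability is not the one you implicitly suggest. Consider two vertex types~$\alpha,\beta$ for which the cross-factor edge between an $\alpha$-vertex and a $\beta$-vertex is \emph{always} oriented $\alpha\to\beta$, regardless of which one has the smaller index. This direction does not reveal the factor order, so it falls into your ``unrecoverable'' bucket. But if $V_\alpha$ and $V_\beta$ are \emph{not} homogeneous, the ordering between them is in fact recoverable: the paper's \cref{lem:ordering-types-heterogeneous} observes that non-homogeneity forces a ``backward edge'' within each window of consecutive bags, and uses such a backward edge as a local anchor to propagate the within-$V_\alpha$ ordering to $V_\beta$. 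Your proposal does not contain this idea. Conversely, the paper does \emph{not} need to recover the relative factor order of types $\alpha,\beta$ that are pairwise homogeneous: it defines a \emph{synchronisation graph} on types (adjacency~$=$~non-homogeneity), orders within each connected component via the backward-edge mechanism, and for types in different components it simply orders by component index, exploiting the fact that cuts separating unions of components have rank bounded by the number of vertex types (\cref{lem:small-rank}), since homogeneity collapses those rows. Your appeal to type-deletion and ``inner induction'' does not supply this rank bound and does not explain how the pieces are glued into a single ordering of bounded cut-rank; in the paper the type-deletion lemma is used precisely to argue that each cell $X_{t,i}$ (bag $B_i$ restricted to types in component $\Theta_t$) has factorisation depth $(p-1,q+2p)$, which feeds the outer induction. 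Finally, your stated difficulty of ``identifying factor membership with only a bounded colouring'' is not actually solved in the paper; rather, \cref{lem:ordering-approx} shows it suffices to order pairs whose indices differ by at least~$2$, after which a modular colouring of indices handles the near pairs. That reduction to approximate ordering is another missing ingredient in your sketch.
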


The proof will follow the same line as in \cite{bojanczyk2018cliquewidth,bojanczyk2016definability} and will be by induction on the height of the Simon's
Factorisation Forest of the words generating bags in $\mathfrak{C}_k$.
We first show in \cref{subsec:check} that the first condition of \cref{thm:transduce-ordering} is easy to ensure.
The main induction is explained in \cref{subsec:induction} and then we proceed to the proof of the main technical lemma in \cref{subsec:main-lemma}.

\subsection{Checking the cut-rank of an ordering}\label{subsec:check}
We first handle the first condition of \cref{thm:transduce-ordering}:
ensuring that the output of~$\Phi_k$ only contains orderings of cut-rank at most~$k'$.

Consider a bi-partition $V(G) = X \uplus Y$ of the vertices of a graph~$G$.
For~$x \in X$, denote by $N_Y(x) = \{y \in Y : (x,y) \in E(G)\}$ its neighbourhood in~$Y$.
By definition, $\rk(X; Y) \le k$ if and only if there are at most~$k$ vertices $x_1,\dots,x_k \in X$ such that
for any~$x' \in X$, there is some $B \subseteq [k]$ satisfying $N_Y(x') = \bigoplus_{b \in B} N_Y(b)$,
where~$\oplus$ denotes sum modulo~2 (or symmetric difference) of sets.
If~$X$ is given as a unary predicate, it is routine using this characterisation to write an \fo formula with~$k+1$ quantifiers asserting that $\rk(X;Y) \le k$.

Consider now an ordering $x_1 < \dots < x_n$ of~$V(G)$.
Replacing the condition $x \in X$ by $x \le x_i$ in the previous argument
yields a formula~$\phi(x_i)$ checking that $\rk(\{x_1,\dots,x_i\}; \{x_{i+1},\dots,x_n\}) \le k$.
Then, the formula $\forall x_i.\ \phi(x_i)$ checks that the ordering~$<$ has cut-rank at most~$k$.
Given any extension transduction~$\Phi$ with signature $\{E\} \to \{E,<\}$,
one can add to~$\Phi$ a filtering step which checks that (1) the relation~$<$ is indeed a linear ordering,
and (2) this ordering has cut-rank at most~$k'$, using the previously described formula.
This ensures that~$\Phi$ satisfies the first condition of \cref{thm:transduce-ordering},
and if~$\Phi(T)$ did contain $(V,E,<)$ for some ordering~$<$ of cut-rank at most~$k'$, then it will still be there after filtering.

Henceforth, we only focus on the second condition of \cref{thm:transduce-ordering}:
building an extension transduction~$\Phi_k$ such that on any~$T$ of linear NLC-width~$k$, the output~$\Phi_k(T)$ contains at least one ordering of cut-rank at most~$k'$.

\subsection{Main induction}\label{subsec:induction}
By \cref{lem:monoid-nlcwidth}, any tournament~$T$ with linear NLC-width~$k$ is also the internal tournament of some bag~$B$ in~$\mathfrak{C}_k$.
Furthermore, by \cref{thm:simon}, this bag~$B$ has factorisation depth at most~$(|M_k|,2|M_k|)$,
where~$M_k$ is the monoid of abstractions of arity~$k$, whose size is a function of~$k$ only.
Thus, \cref{thm:transduce-ordering} follows from the next statement and the arguments of \cref{subsec:check}.
\begin{lemma}
  \label{lem:transduce-ordering}
  For any~$k,p,q \in \Nn$, one can compute $f(k,p,q) \in \Nn$ and
  an extension \fo transduction $\Phi_{(p,q)}^k : \{E\} \to \{E,<\}$ such that
  if~$T = (V,E)$ is the internal tournament of a bag $B \in \mathfrak{C}_k$ of factorisation depth at most~$(p,q)$,
  then there is some ordering~$<$ of~$T$ with cut-rank at most~$f(p,q,k)$ such that $(V,E,<) \in \Phi_{(p,q)}^k(T)$.
\end{lemma}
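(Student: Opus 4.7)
The proof proceeds by strong induction on the factorisation depth $(p, q)$, ordered lexicographically with $p$ as the primary coordinate. In the base case $(p, q) = (0, 0)$, the bag $B$ is atomic, so $T$ has at most one vertex, and any ordering trivially has cut-rank $0$. We set $f(k, 0, 0) \eqdef 0$.

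For the binary step $B = B_1 \cdot B_2$ with each $B_i$ of depth at most $(p, q-1)$, the transduction first non-deterministically colours $V(T)$ with two colours to guess the bipartition $V(B_1) \uplus V(B_2)$. It then invokes the inductive transduction $\Phi^k_{(p, q-1)}$ in parallel on each side via \cref{lem:parallel-transduction}. The output ordering places $V(B_1)$ entirely before $V(B_2)$, using the recursive ordering within each part. Because each $B_i$ has order $k$, the single cut between the two parts has rank at most $k$, and \cref{lem:lexico-rankwidth} gives a total cut-rank of at most $2k + f(k, p, q-1)$.

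The idempotent step $B = B_1 \cdots B_n$, with all $B_i$ of the idempotent type $\tau$ and depth at most $(p-1, q)$, is the main challenge because $n$ may be unbounded. Were we able to output the bag-ordered concatenation $V(B_1) < \cdots < V(B_n)$ with each $V(B_i)$ ordered by the inductive transduction, then each prefix $V(B_1) \cup \cdots \cup V(B_i)$ would be a boundary cut of rank at most $k$, and \cref{lem:lexico-rankwidth} would bound the total cut-rank by $2k + f(k, p-1, q)$. The task therefore reduces to FO-defining the bag ordering. The key structural lever is that, by idempotence of $\tau$ and hence of $\rho_\tau$, combined with \cref{lem:type-equiv-congruence}, the orientation of any edge between vertices lying in distinct bags is determined entirely by their vertex types and by whether the two bags are adjacent or not. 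The plan is to guess a constant-palette colouring recording each vertex's vertex type together with its bag index modulo some fixed constant; edges between non-adjacent bags then reveal the long-range bag order via any pair of vertex types whose edge orientations discriminate between "earlier" and "later" positions, while the modular colouring distinguishes adjacent bags. When no discriminating type pair exists, the cross-bag tournament is invariant under reordering of the bags, so any ordering consistent with the internal orderings gives the same cut-rank bound.

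Setting $f(k, p, q) \eqdef 2k + \max\{f(k, p, q-1), f(k, p-1, q)\}$ closes the induction and yields $f(k, p, q) = O(k(p+q))$. The principal obstacle is the idempotent case: I must argue rigorously that a constant number of guessed colours, combined with the long-range edge-determined structure and the recursively obtained internal orderings, suffices to FO-define a valid total ordering uniformly over all values of $n$, and to verify that the cross-bag homogeneity forced by idempotence of $\tau$ is strong enough to recover the bag boundaries.
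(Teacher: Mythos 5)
Your overall plan — induction on $(p,q)$ lexicographically with Simon's factorisation, trivial base case, binary case by guessing the bipartition and splicing two recursive orderings, idempotent case by recovering the bag order from type-determined long-range edges plus a modular colouring — matches the paper's skeleton, and your binary step is essentially correct (the paper gets $f(k,p,q-1)+k$ instead of your $2k+f(k,p,q-1)$, but that is immaterial). However, there is a real gap in the idempotent case, precisely at the place you flag as "the principal obstacle," and the way you dispose of it is not sound.

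The issue is what to do when the edge orientations between types do \emph{not} allow you to recover the bag order. You write that "when no discriminating type pair exists, the cross-bag tournament is invariant under reordering of the bags, so any ordering consistent with the internal orderings gives the same cut-rank bound." But the actual situation is not a global dichotomy. Define the synchronisation graph on the inhabited vertex types, with an edge between $\alpha,\beta$ when $V_\alpha$ and $V_\beta$ are non-homogeneous. Within a connected component one can propagate bag synchronisation and recover $\qle$ on the vertices with types in that component; across two components there is no FO-accessible information about the relative bag order, and no uniform guess works, because the same structure $T$ can arise from many different interleavings. The paper's resolution is to \emph{reorganise} the decomposition: order first by connected component and only then by bag index, producing new parts $X_{t,i}$ that are the restriction of each $B_i$ to a union of types. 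This has two consequences you do not account for. First, the cut in the reorganised ordering is no longer a boundary cut of the original factorisation, so the cross-cut rank is $k\cdot(2^k+1)$ rather than $k$ (you need \cref{lem:small-rank}). Second, and more seriously, the pieces $X_{t,i}$ are proper subsets of the original bags, so to apply the inductive hypothesis to them you must argue that deleting a union of vertex types from a bag of factorisation depth $(p-1,q)$ still yields a bag in $\mathfrak{C}_k$ of controlled factorisation depth; this requires \cref{lem:type-deletion} and gives depth $(p-1,q+2p)$. As a result the correct recurrence in the idempotent case is $f(k,p,q) \ge f(k,p-1,q+2p)+2k\cdot(2^k+1)$, not your $f(k,p,q)=2k+\max\{f(k,p,q-1),f(k,p-1,q)\}$, whose second argument does not increase and hence cannot absorb the reorganisation. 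Without the synchronisation-graph decomposition and the type-deletion lemma, the induction does not close.
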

We once again insist that the output $\Phi_{(p,q)}^k(T)$ needs to contain some linear ordering of~$T$ with the required cut-rank,
but may also contain other structures where~$<$ is interpreted arbitrarily (possibly not even as a linear ordering).
Thus we will show that if non-deterministic steps follow the `right' choices,
then the transduction produces the desired ordering, while ignoring anything resulting from a `wrong' choice.
We phrase this as the non-deterministic step \emph{guessing} the desired colouring.

Let us begin the proof of \cref{lem:transduce-ordering}, by induction on the pair~$(p,q)$ ordered lexicographically.
Choose the bound~$f(k,p,q)$ to satisfy:
\begin{align*}
  f(k,0,0) &= k, \\
  f(k,p,q) &\ge f(k,p,q-1)+k && \text{if $q>0$, and}\\
  f(k,p,q) &\ge f(k,p-1,2p+q)+2k\cdot (2^k+1) && \text{if $p>0$.}
\end{align*}
Consider a bag~$B \in \mathfrak{C}_K$ whose internal tournament is~$T$, and with factorisation depth~$(p,q)$.

In the base case~$p = q = 0$, the bag~$B$ is atomic, and the result is trivial.
Assume now that~$p>0$ or~$q>0$, so that the Simon's factorisation of~$B$
has either a binary or an idempotent operation at the root.
The transduction~$\Phi_{(p,q)}^k$ starts by guessing which of these two cases occurs.
This is simply a boolean guess, which can be simulated with the non-deterministic colouring operation.
Depending on this `binary' or `idempotent' guess, the transduction continues with the corresponding branch described in the next two paragraphs;
this means that the formulas used in the transduction~$\Phi_{(p,q)}^k$ are of the form
$(idempotent \land \phi_I) \lor (\lnot idempotent \land \phi_B)$,
where $idempotent$ represents the previous boolean guess,
and $\phi_B,\phi_I$ describe the appropriate transduction for the binary and idempotent cases respectively.

\subsubsection*{Binary case}
\label{binary case of the main induction}
Assume $B = B_1 \cdot B_2$ where~$B_1,B_2$ have factorisations of depth at most~$(p,q-1)$.
We use a non-deterministic colouring to guess the bipartition~$V(B_1),V(B_2)$.
Then, by restricting the quantifiers to~$V(B_i)$, one can simulate applying the transduction~$\Phi_{(p,q-1)}^k$ obtained by induction to each~$T[V(B_i)]$.
Thus, we obtain linear orderings~$<_1,<_2$ of cut-rank at most~$f(k,p,q-1)$ for~$T[V(B_1)]$ and~$T[V(B_2)]$ respectively.

Define~$<$ on~$T$ by~$V(B_1) < V(B_2)$, where~$<$ coincides with~$<_i$ inside~$V(B_i)$.
One can transduce~$<$ from~$<_1,<_2$ and the bipartition~$V(B_1),V(B_2)$.
Since we are dealing with bags of arity~$k$, we have $\rk(V(B_1); V(B_2)) \le k$.
Since~$<_i$ has cut-rank at most~$f(k,p,q-1)$ in~$T[V(B_i)]$ for~$i \in \{1,2\}$,
this implies that~$<$ has cut-rank at most~$f(k,p,q-1)+k\leq f(k,p,q)$.

\subsubsection*{Idempotent case}
Assume now we are given an idempotent factorisation,
that is $B = B_1\cdot \ldots \cdot B_n$ where~$\abstr{B_i} = \alpha$ for all~$1\leq i\leq n$, with~$\alpha$ idempotent,
and each~$B_i$ has factorisation depth at most~$(p-1,q)$.

Consider the quasi-ordering~$\qle$ of~$V(B)$ defined by~$V(B_1) \qle \dots \qle V(B_n)$,
and call~$\sim$ the equivalence relation whose classes are $V(B_1),\dots, V(B_n)$.
Suppose in a first time that we are given~$\qle$ (transducing it will be the core of this proof).
Using parallel application (\cref{lem:parallel-transduction}), this allows to apply~$\Phi_{(p-1,q)}^k$ to each~$B_i$ simultaneously,
yielding a relation~$<$ which inside each~$B_i$ is interpreted as a linear ordering of cut-rank at most~$f(k,p-1,q)$.

Define~$<'$ to be the linear ordering which coincides with~$<$ inside each bag~$B_i$, and with~$\qle$ between the bags.
Clearly~$<'$ can be transduced from~$<$ and~$\qle$, and \cref{lem:lexico-rankwidth} gives a bound on the cut-rank of~$<'$.

We are thus only left with the core problem of transducing the quasi-ordering~$\qle$.
Unfortunately, it is not always possible to transduce exactly~$\qle$.
For instance~$B$ might be partitioned into two homogeneous sets~$X_1,X_2$, each intersecting all~$B_j$.
The ordering~$\qle$ then alternates an unbounded number of times between~$X_1$ and~$X_2$.
This means that across the bipartition $(X_1,X_2)$, tournament edges are a relation with bounded rank, but the ordering~$\qle$ has unbounded rank.
It follows by compositionality arguments that~$\qle$ cannot be transduced from the tournament.

In this case, we need to reorganise the decomposition of~$B$
by first separating~$X_1$ and~$X_2$, and then decomposing each of them.
The precise statement we prove is thus the following.

\begin{lemma}
  \label{lem:idempotent-partition}
  Given~$k$, one can compute an extension \fo transduction $\Psi_k$ with signature $\{E\} \to \{E,\qle\}$ satisfying the following.
  If~$T = (V,E)$ is the internal tournament of a bag $B\in \mathfrak{C}_k$ with factorisation depth at most~$(p,q)$,
  then there is a quasi-ordering~$\qle$ of~$V$ with equivalence classes $X_1 \qle \dots \qle X_m$
  such that
  \begin{enumerate}
    \item \label{item:fact-depth} for each~$i \in [m]$, $T[X_i]$ is the internal tournament of a bag in $\mathfrak{C}_k$ having factorisation depth at most~$(p-1,q+2p)$,
    \item \label{item:part-rank} for each~$i \in [m]$,
      \[ \rk(X_1 \cup \dots \cup X_i;\ X_{i+1} \cup \dots \cup X_m) \le k\cdot (2^k+1), \]
    \item \label{item:transduction} and $(V,E,\qle) \in \Psi_{(p,q)}^k(T)$.
  \end{enumerate}
\end{lemma}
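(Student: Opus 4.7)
The plan is to reorganise the idempotent decomposition by partitioning $V(T)$ according to vertex types in $B$, merging types whenever they fail to be homogeneous with respect to each other in $B$. Concretely, the transduction first uses a colouring operation to guess, for each vertex $v$, its type $\type_B(v)$; by \cref{lem:type-equiv-congruence} and the idempotence of $\tau$ (so $\rho_{B_1\cdots B_{i-1}} = \rho_{B_{i+1}\cdots B_n} = \rho_\tau$ for middle indices $i$), this type is determined by the local type $\type_{B_i}(v)$ together with ``first/middle/last'' positional information, and thus requires $O(k \cdot 2^k)$ colours. Only the ``correct'' colouring needs to yield a valid output; incorrect ones are filtered out by the subsequent cut-rank check of \cref{thm:transduce-ordering}.

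From the colouring I would define the partition as follows. Call two inhabited types $\alpha,\beta$ in $B$ \emph{linked} if $\alpha(B)$ and $\beta(B)$ fail to be homogeneous in $B$, a property determined by $\type(B)$, which the transduction treats as a constant depending only on $k$. Merge linked types to form super-classes $\Gamma_1,\dots,\Gamma_m$, and set $X_j \eqdef \Gamma_j(B)$. Since $T$ is a tournament, the homogeneous bipartite tournaments between distinct super-classes orient transitively and yield a linear order $X_1 \qle \dots \qle X_m$, which will be the output quasi-ordering. Condition~(2) then follows: between two super-classes all edges split into homogeneous pieces indexed by type pairs, so the bipartite structure between any prefix and suffix of the partition is captured by a bounded number of type behaviours; combining this with the $k$-colour boundary of the bag yields the rank bound $k(2^k+1)$.

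The main technical point is condition~(1): showing that each $T[X_j]$ is the internal tournament of a bag in $\mathfrak{C}_k$ with factorisation depth $(p-1, q+2p)$. Since each $X_j$ is a union of types in $B$, the type-deletion lemma (\cref{lem:type-deletion}) immediately gives depth $(p, q+2p)$. To reduce $p$ to $p-1$, the key observation is that in the restricted product $B[X_j] = B_1[X_j \cap V(B_1)] \cdots B_n[X_j \cap V(B_n)]$, the outer idempotent iteration no longer carries any genuine structure: by construction, all pairs of types that failed to be homogeneous have been absorbed inside a single super-class, so across the factors the only interactions are homogeneous and already encoded in the common type. One level of idempotent nesting is thus peeled off in exchange for a bounded number of additional binary concatenations, yielding the target depth.

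Finally, the quasi-ordering is extracted by an FO interpretation: $v \qle w$ iff $v$ and $w$ share a super-class (computed by evaluating the linking equivalence on types, which has bounded size and can be hard-coded in the formula), or the super-class of $v$ strictly precedes that of $w$ in the homogeneity direction, itself readable from the colours. The main obstacle is the idempotent-depth reduction in step three: one must argue carefully that the super-class construction captures precisely the idempotent behaviour of the factorisation, so that within each super-class one full level of nested idempotence collapses into the block structure, and the remaining depth is absorbed by at most $2p$ binary operations.
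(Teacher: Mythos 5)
Your construction of super-classes as connected components of the non-homogeneity (``linking'') relation matches the paper's synchronisation graph $G^\syn$, but the partition you extract from it is too coarse, and this breaks condition~(1). You set $X_j := \Gamma_j(B) = V_{\Theta_j}$, a single class per component, and then need $T[V_{\Theta_j}]$ to have factorisation depth $(p-1,q+2p)$. As you note, \cref{lem:type-deletion} only gives $(p,q+2p)$, and your attempt to ``peel off a level of idempotence'' is not correct: being in the same component $\Theta_j$ means precisely that the types in it \emph{can} be pairwise non-homogeneous (connected by a chain of non-homogeneity), so $B[V_{\Theta_j}] = B_1[V_{\Theta_j}\cap V(B_1)]\cdots B_n[V_{\Theta_j}\cap V(B_n)]$ still has genuine cross-bag structure and its factorisation does not collapse. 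The paper sidesteps this by refining the partition further: its equivalence classes are the sets $X_{t,i}$ indexed by both a component $t$ and a bag index $i$. Each $X_{t,i}$ is a union of types inside a \emph{single} $B_i$, and $B_i$ has depth $(p-1,q)$, so applying \cref{lem:type-deletion} to $B_i$ alone gives the required bound $(p-1,q+2(p-1))\le(p-1,q+2p)$ without any idempotence-collapse argument.

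Two further points. First, your claim that homogeneity ``orients transitively and yields a linear order'' on super-classes is unfounded: between two super-classes, the edges split into several homogeneous blocks indexed by type pairs, and these blocks can point in different directions, so there is no canonical orientation. The paper just fixes an arbitrary enumeration $\Theta_1,\dots,\Theta_r$ and proves the rank bound (Lemma 4.9) by counting types, not by invoking any transitivity. Second, you skip the actual technical content of condition~(3): the FO-transducibility of the quasi-order inside each $V_{\Theta_t}$ (the paper's Lemmas 4.4--4.8, involving approximate ordering, backward edges, and propagation along paths in $G^\syn$). That is where most of the work lies; it cannot be dispatched as an FO interpretation ``readable from the colours''.
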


Assuming \cref{lem:idempotent-partition}, we conclude as follows.
From the quasi-ordering~$\qle$, it is simple to transduce the equivalence relation~$\sim$ with equivalence classes $X_1,\dots,X_m$.
Recall that we do induction on~$(p,q)$ ordered lexicographically,
hence we can assume the transduction~$\Phi^k_{(p-1,q+2p)}$ is already defined.
Using parallel application (\cref{lem:parallel-transduction}), this transduction can be applied to each~$X_i$ in parallel.
This yields a relation~$<$ which inside each~$T[X_i]$ is a linear ordering with cut-rank at most~$f(k,p-1,q+2p)$.
Define~$<'$ to coincide with~$<$ inside each~$X_i$ and with~$\qle$ between the different~$X_i$,
which is easily transduced from~$<,\qle$.
Using \cref{lem:lexico-rankwidth}, and condition~\ref{item:part-rank} of \cref{lem:idempotent-partition},
we obtain that~$<'$ has cut-rank at most
\[ f(k,p-1,q+2p)+2k\cdot (2^k+1) \le f(k,p,q). \]
This concludes the idempotent case, and thus the proof of \cref{lem:transduce-ordering}.

\subsection{Proof of Lemma \ref{lem:idempotent-partition}}\label{subsec:main-lemma}
For this entire section, we fix some~$k \in \Nn$,
and consider a factorisation $B = B_1 \cdot \ldots \cdot B_n$,
where all~$B_i$ have the same idempotent abstraction~$\alpha$, and have factorisation depth at most~$(p-1,q)$.
We also call $T = (V,E)$ the internal tournament of~$B$.

Recall that \cref{lem:idempotent-partition} asks for a transduction depending only on~$k$.
As the very first step of this transduction, one may use non-deterministic colouring to guess the abstraction~$\alpha$
(for which there are only finitely many possibilities for a fixed~$k$, by \cref{lem:bag-types-hom}).
We may thus instead allow the transductions constructed in this section to depend on both~$k$ and~$\alpha$
(but not~$B$, the factorisation, or~$p,q$).

For a vertex~$x \in V(B_i)$, we call $\idx(x) = i$ the \emph{index} of $x$.
Our goal is to answer (with an \fo transduction) the question: given~$x,y$, is~$\idx(x) \le \idx(y)$?
To be formal, for a subset~$X$ of~$V(B)$, we say that a transduction~$\Phi$ \emph{orders~$X$ according to~$\qle$}
if~$\Phi$ is an extension \fo transduction, and there is some $(V,E,\qle') \in \Phi(T)$ such that~$\qle$ and~$\qle'$ coincide on~$X$.

Using the non-determinism of transductions, one may show that it suffices to consider vertices whose indices differ by at least some constant.
That is, say that~$\Phi$ \emph{approximately} orders~$X$ according to~$\qle$
if once again~$\Phi$ is an extension \fo transduction, and there is $(V,E,\qle') \in \Phi(T)$ such that~$\qle,\qle'$
coincide for any pair $x,y \in X$ that satisfies~$|\idx(x) - \idx(y)| \ge 2$.
When~$X$ hits all bags of the factorisation, if we can approximately order~$X$, then we can also order it:
\begin{lemma}
 \label{lem:ordering-approx}
 Let~$X$ be a subset of vertices containing at least one vertex for each possible index.
 For any \fo transduction~$\Phi$, there is a second transduction~$\Psi$ such that
 if~$\Phi$ approximately orders~$X$ according to~$\qle$, then~$\Psi$ orders~$X$ according to~$\qle$.
\end{lemma}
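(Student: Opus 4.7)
The plan is to construct $\Psi$ by composing $\Phi$ with a non-deterministic $5$-coloring of $X$ followed by a final \fo interpretation. Step~1 adds unary predicates $C_0,\dots,C_4$; among the non-deterministic outputs we focus on the one where $c(v) := \idx(v) \bmod 5$ for every $v \in X$. Step~2 applies $\Phi$ on the underlying tournament, passing the colors through unchanged, and yields an approximate ordering~$\qle'$. Step~3 is an interpretation defining~$\qle$ from~$\qle'$ and the colors; for colorings other than the intended one the output may be garbage, but only the existence of one correct output is required.

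The key combinatorial observation is the following. With $c(v) = \idx(v) \bmod 5$ and $\delta := (c_y - c_x) \bmod 5$, any pair $x, y \in X$ satisfies $|\idx(x) - \idx(y)| \ge 2$ whenever $\delta \in \{2,3\}$. In the opposite cases $\delta \in \{0, 1, 4\}$ we resolve the ambiguity via an intermediate witness $z \in X$ whose color is chosen so that both $(c_z - c_x) \bmod 5$ and $(c_z - c_y) \bmod 5$ lie in $\{2,3\}$, guaranteeing that $\qle'$ is reliable on both of the pairs $(x, z)$ and $(z, y)$. The admissible residues are $c_z - c_x \in \{2,3\}$ when $\delta = 0$, $c_z - c_x = 3$ when $\delta = 1$, and $c_z - c_x = 2$ when $\delta = 4$; since $X$ hits every index, such a witness exists whenever the corresponding integer $\idx(z)$ lies between $\idx(x)$ and $\idx(y)$.

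A case analysis on $\delta$ then defines $\qle$ explicitly. For $\delta \in \{2,3\}$ we simply take $x \qle y$ iff $x \qle' y$. For $\delta = 0$ — where the $\idx$-difference is either $0$ or of absolute value at least $5$ — we declare $x \qle y$ iff no admissible witness $z$ satisfies both $y \qlt' z$ and $z \qlt' x$: the absence of such a $z$ simultaneously covers the equivalence $x \sim y$ and the forward case $\idx(x) < \idx(y)$. The cases $\delta = 1$ and $\delta = 4$ are handled asymmetrically, because the "small" $\idx$-difference is $+1$ in the former and $-1$ in the latter: for $\delta = 1$ we set $x \qle y$ iff no admissible $z$ satisfies $y \qlt' z$ and $z \qlt' x$, and for $\delta = 4$ we set $x \qle y$ iff some admissible $z$ satisfies $x \qlt' z$ and $z \qlt' y$. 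A direct verification, case by case on the $\idx$-differences permitted by each~$\delta$, shows that for the intended coloring these formulas coincide with~$\qle$ on~$X$.

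The main obstacle is choosing the palette large enough that a single admissible color for the intermediate witness always exists: with $3$ or $4$ colors one can check that whenever $\delta \in \{1, 4\}$ no residue of $c_z - c_x$ can simultaneously keep $c_z - c_x$ and $c_z - c_y$ outside $\{-1, 0, 1\} \bmod k$, so the witness trick breaks. Five colors narrowly suffice, after which the remainder of the argument is an essentially mechanical case check.
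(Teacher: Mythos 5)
Your proof is correct and takes essentially the same approach as the paper: colour by $\idx \bmod 5$, read off $\qle'$ directly when the residues differ by at least~2, and otherwise resolve the ambiguity via an intermediate witness whose residue is at least~2 away from both. Your explicit $\delta$-by-$\delta$ case analysis with hardcoded admissible residues is just a more worked-out form of the paper's argument (which instead observes that in the absence of a separating witness both indices fall in a common length-5 interval, so the guessed residues determine the order).
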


\begin{proof}
  Using non-deterministic colouring, $\Psi$ guesses the index of every vertex modulo~5.
  Let~$x$ and $y$ be two vertices in~$X$ with indices~$i$ and~$j$ respectively.
  If~$i$ and $j$ differ by at least~2 modulo~5, then we can by assumption test whether~$x \qle y$ using~$\Phi$.
  Assume thus that~$j$ is one of~$\{i-1,i,i+1\}$ modulo~5.
  Then, there exists~$k \in \Zz / 5\Zz$ at distance at least~2 modulo~5 from both~$i$ and~$j$.

  If there exists $z \in X$ with index~$k$ modulo~5 such that~$x \qle z$ and~$z \qle y$ (which can again be tested with~$\Phi$),
  then~$x \qle y$ by transitivity, and similarly if $y \qle z \qle x$.
  Let us thus assume that any $z \in X$ with index~$k$ modulo~5
  is either before both~$x$ and~$y$ in~$\qle$, or after both of them.

  For any~$t$, pick~$z_t \in X$ with index~$5t+k$ (as long as it is in the interval~$[n]$),
  which exists by assumption.
  Then there must be some~$s$ such that $z_t \qle x,y$ when~$t \le s$ and~$z_t \qge x,y$ when~$t > s$
  (with possibly~$s = 0$ or~$s = n$).
  Thus~$i$ and $j$ are between $5s+k$ and $5(s+1)+k$, and are equal to neither of them since~$i,j$ are not~$k$ modulo~5.
  It follows that that the ordering of~$i,j$ only depends on their values modulo~$5$, which we have guessed.
\end{proof}

Let us call~$\rho$ the recolouring function of the bags~$B_i$.
It is the same for all bags since it is part of the abstraction~$\alpha$,
and it is idempotent ($\rho \circ \rho = \rho$).
Consider any vertex of~$B$, say~$x \in V(B_i)$.
In the bag~$B_i$, $x$ is given some colour~$\lambda_{B_i}(x)$,
which we call its \emph{initial colour}~$\colinit(x)$.
Then, as we multiply by~$B_{i+1}$, the colour of~$x$ changes to~$\rho(\colinit(x))$.
Continuing with~$B_{i+2}$ will again recolour~$x$ by applying~$\rho$,
but since~$\rho$ is idempotent, this does nothing.
Thus, for any~$j>i$, the colour of~$x$ in~$B_1\cdot \ldots \cdot B_j$ is~$\rho(\colinit(x))$.
We call this its \emph{final colour} and denote it by~$\colfin(x)$.

Recall that to define the abstraction~$\alpha$ of the bags~$B_i$,
we first defined the vertex type $\type_{B_i}(x)$ of~$x$ inside the bag~$B_i$,
which indicates its colour~$\lambda_{B_i}(x)$, \ie the initial colour,
and the direction of edges between~$x$ and input vertices in~$B_i$.
In what follows, the type of a vertex~$x$ will always be understood 
relative to the bag~$B_i$ containing it, and we shorten it to~$\type(x)$.
Using a non-deterministic colouring, we can guess the types of all vertices in $B$,
and we assume this colouring to be given in all subsequent lemmas.

\begin{lemma}
 \label{lem:edge-direction-type}
 Given vertices~$x,y$ with~$\idx(x)+2 \le \idx(y)$,
 the direction of the edge between $x$ and $y$ only depends on~$\type(x),\type(y)$, and~$\rho$.
\end{lemma}

\begin{proof}
  Say~$\idx(x) = i$ and~$\idx(y) = j$.
  By definition of the product of bags, the direction of the edge between $x$ and $y$
  is the same as that of the edge between $c$ and $y$ in~$B_j$, where~$c$ is the colour of~$x$ in~$B_i \cdot \ldots \cdot B_{j-1}$.
  Since~$j \ge i+2$, the colour~$c$ is $\colfin(x)$, which only depends on~$\type(x)$ and~$\rho$.
  The direction of the edge between $c$ and $y$ in~$B_j$ is then given by~$\type(y)$.
\end{proof}

With this in hand, we can already order the vertices of a given type~$\tau$.
Denote by~$V_\tau = \bigcup_i \tau(B_i)$ the set of vertices of type~$\tau$.
Note that if~$V_\tau$ is non-empty, then it intersects all bags~$B_1,\dots,B_n$.
Indeed whether or not~$B_i$ contains a vertex of type~$\tau$ is indicated by the abstraction~$\alpha$.

\begin{lemma}
 \label{lem:ordering-type}
 For any type~$\tau$, there is an \fo transduction depending only on~$k,\rho,\tau$ that orders~$V_\tau$ according to~$\qle$.
\end{lemma}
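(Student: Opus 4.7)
The plan is to leverage \cref{lem:far-vertices-type} directly: for two vertices $x,y \in V_\alpha$ whose indices differ by at least~$2$, the direction of the edge between them is a function of~$\alpha$ and~$\rho$ alone, so that direction already tells us which of $\idx(x),\idx(y)$ is smaller. Combined with \cref{lem:ordering-approx}, this reduces the task to a single interpretation formula.

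First, I would dispose of the trivial case $V_\alpha = \emptyset$. Otherwise, note that whether $\alpha$ is inhabited in $B_i$ is recorded in the type~$\tau$, which is the same for all~$i$; hence $V_\alpha$ intersects every bag $B_1,\dots,B_n$. In particular, $V_\alpha$ contains at least one vertex of each possible index, so \cref{lem:ordering-approx} will be applicable.

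Next, applying \cref{lem:far-vertices-type} with $\type(x) = \type(y) = \alpha$ yields a direction $d_\alpha \in \{\text{forward}, \text{backward}\}$, depending only on $\alpha$ and $\rho$, such that for every pair $x,y \in V_\alpha$ with $\idx(x) + 2 \le \idx(y)$ the edge between $x$ and $y$ is oriented from $x$ to $y$ (when $d_\alpha$ is forward) or from $y$ to $x$ (otherwise). By symmetry, the same rule determines every edge between type-$\alpha$ vertices whose indices differ by at least~$2$: the earlier vertex is always the tail (resp.\ head) of the edge. Using the guessed types as unary predicates, this gives an \fo formula
\[ \phi_{\qle'}(x,y) \;\eqdef\; (x = y) \lor \bigl( \type(x) = \type(y) = \alpha \land E_\alpha(x,y) \bigr), \]
where $E_\alpha(x,y)$ is $E(x,y)$ if $d_\alpha$ is forward and $E(y,x)$ otherwise. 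Interpreting $\qle'$ via $\phi_{\qle'}$ produces an extension transduction that depends only on $k$, $\rho$, and $\alpha$ (through $d_\alpha$). By construction, $\qle'$ coincides with $\qle$ on every pair of $V_\alpha$ whose indices differ by at least~$2$; it may behave arbitrarily on pairs inside the same or adjacent bags, but that is exactly what approximate ordering allows.

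Finally, applying \cref{lem:ordering-approx} to this approximate ordering of $V_\alpha$ yields a transduction that truly orders $V_\alpha$ according to~$\qle$. The only step that might look delicate is checking that the same $d_\alpha$ governs all far-apart pairs regardless of which of $x,y$ is earlier; but this follows from the symmetry of \cref{lem:far-vertices-type} and the fact that $\type(x) = \type(y) = \alpha$, so that reversing the roles of $x$ and $y$ in the lemma's statement gives back the same rule. No further combinatorial work is required beyond this single interpretation and the invocation of \cref{lem:ordering-approx}.
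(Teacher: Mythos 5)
Your proposal is correct and follows essentially the same route as the paper's proof: both reduce to approximate ordering via \cref{lem:ordering-approx} and then observe, using \cref{lem:far-vertices-type} with both types equal to~$\alpha$, that the edge direction between far-apart type-$\alpha$ vertices is a fixed function of~$\alpha$ and~$\rho$, so the edge orientation determines the index comparison. The extra details you spell out (that $V_\alpha$ hits every bag, and the symmetry check on $d_\alpha$) are left implicit in the paper but are correct and worth noting.
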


\begin{proof}
  By \cref{lem:ordering-approx}, it is sufficient to approximately order~$V_\tau$.
  According to \cref{lem:edge-direction-type}, for any~$x,y$ of type~$\tau$ satisfying~$\idx(x)+2 \le \idx(y)$,
  either there always is the edge~$x \to y$, or always~$x \from y$,
  and this direction depends only on~$\tau,\rho$.
  Either way, one can order~$x,y$ with indices differing by at least~2
  simply by looking at the direction of the edge between them.
\end{proof}

\newcommand{\syn}{\mathsf{syn}}
Thus we can order the vertices of each type independently, and the issue is now to combine these orderings.
Denote by~$\Gamma$ the set of inhabited vertex types in~$B_1,\dots,B_n$.
Recall that subsets of vertices~$X,Y$ are called \emph{homogeneous}
if either all edges are oriented from~$X$ to~$Y$, or all from~$Y$ to~$X$.
We define the \emph{synchronisation graph}~$G^\syn$ with vertices~$\Gamma$,
in which~$\sigma,\tau$ are adjacent if and only if~$V_\sigma$ and~$V_\tau$ are \emph{not} homogeneous.

We will show that when~$G^\syn$ is connected, one can transduce the quasi-ordering~$\qle$
corresponding to the original decomposition~$B_1\cdot \ldots \cdot B_n$.
Then, we will handle the case where~$G^\syn$ is disconnected by slightly modifying this decomposition.
We start with only two types that are non-homogeneous.
\begin{lemma}
  \label{lem:ordering-types-heterogeneous}
  Let~$\sigma,\tau \in \Gamma$ be types such that~$V_\sigma,V_\tau$ are non-homogeneous.
  Then~$V_\sigma \cup V_\tau$ can be ordered according to~$\qle$ by an \fo transduction depending only on~$k,\rho,\sigma,\tau$.
\end{lemma}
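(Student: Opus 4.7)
By \cref{lem:ordering-approx}, it suffices to approximately order $V_\alpha \cup V_\beta$ according to $\qle$, \ie to correctly decide, for every pair $(x,y) \in V_\alpha \times V_\beta$ with $|\idx(x)-\idx(y)|\ge 2$, whether $x \qle y$. Applying \cref{lem:ordering-type} to both $\alpha$ and $\beta$, we may also assume that quasi-orderings $\qle_\alpha$ on $V_\alpha$ and $\qle_\beta$ on $V_\beta$, each coinciding with $\qle$, have already been transduced; the equivalence classes of $\qle_\alpha$ (elements equivalent in both directions) are then exactly the sets $V_\alpha \cap B_i$, and similarly for $\beta$.

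By \cref{lem:far-vertices-type}, whenever $x \in V_\alpha$ and $y \in V_\beta$ satisfy $|\idx(x) - \idx(y)| \ge 2$, the orientation of the edge between $x$ and $y$ depends only on $\alpha, \beta, \rho$ and on which of the two indices is larger. Call the two resulting orientations $d_<$ and $d_>$. If $d_< \ne d_>$, the edge orientation alone determines the relative order of $x$ and $y$, and a single \fo formula captures the desired ordering.

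Otherwise $d_< = d_>$, of common orientation $d$. Since $V_\alpha, V_\beta$ are non-homogeneous, there exists an \emph{anti-pair} $(u,v) \in V_\alpha \times V_\beta$ whose edge opposes $d$, and by the previous paragraph every such pair satisfies $|\idx(u) - \idx(v)| \le 1$. The plan is to produce an \fo-definable \emph{synchronisation map} $\mu\colon V_\alpha \to V_\beta$ with $|\idx(\mu(x)) - \idx(x)| \le 1$, by letting $\mu(x)$ be the $\qle_\beta$-minimum anti-neighbour of some vertex $x^\star \in V_\alpha$ that is $\qle_\alpha$-equivalent to $x$. Once $\mu$ is defined, for $y \in V_\beta$ with $|\idx(x) - \idx(y)| \ge 2$ one has $\idx(x) < \idx(y) \iff \mu(x) \qlt_\beta y$, which is \fo-expressible; a symmetric construction yields $\mu'\colon V_\beta \to V_\alpha$.

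The main obstacle is ensuring that $\mu(x)$ is defined for every $x \in V_\alpha$; a case analysis on~$\tau$ shows that this always holds. Either the adjacent-bag or the same-bag interaction between $\alpha$ and $\beta$ opposes $d$, in which case every $x$ admits a direct anti-neighbour in $V_\beta$; or $\alpha(B_i), \beta(B_i)$ is internally non-homogeneous, so each bag must contain at least one $x^\star \in V_\alpha$ with an anti-neighbour, allowing $x$ to inherit $\mu(x)$ via the $\qle_\alpha$-equivalence. A constant number of boundary pairs near the extremal bags, where both $\mu$ and $\mu'$ may fail to apply, are handled by transitivity through an intermediate vertex $z$: if $z \in V_\alpha$ satisfies $y \qlt_\beta \mu(z)$ and $z \qlt_\alpha x$, then $y \qlt x$, and this disjunction can be added to the defining formula.
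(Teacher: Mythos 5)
Your proposal follows essentially the same route as the paper's proof: reduce to approximate ordering via the earlier lemma, use the far-vertices-type lemma to identify the four possible edge-direction patterns, dispatch the two "simple" cases where the edge orientation already encodes index order, and in the remaining cases use the non-homogeneity to find anti-edges with bounded index offset and use a vertex of the other type as a bridge. Your ``synchronisation map'' $\mu$ is a repackaging of the paper's step ``find a backward edge $u' \from v'$ with $u' \in V_{i,\alpha}$, then compare $v'$ and $y$ via \cref{lem:ordering-type}''.

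One point deserves attention. You correctly flag the existence issue for $\mu$ at the extremal bags; the paper's ``for all $i$, there is a backward edge between $V_{i,\alpha}$ and one of $V_{i-1,\beta},V_{i,\beta},V_{i+1,\beta}$'' is also stated without qualification but can fail for $i=1$ or $i=n$ when the only backward edges are strictly adjacent-bag. However, your patch is imprecise: the affected set is not ``a constant number of boundary pairs'' but all pairs $(x,y)$ with $x$ in an extremal bag, which may involve linearly many vertices, and the transitivity disjunction as stated (requiring $y \qlt_\beta \mu(z)$ strictly) does not by itself cover the tight case $\idx(y)=\idx(x)-2$. A correct patch uses both $\mu$ and the symmetric $\mu'\colon V_\beta\to V_\alpha$, or alternatively guesses a unary predicate marking the extremal bags during the $\pmod 5$ colouring and handles those vertices with a special clause. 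This is a detail of execution, not of approach, and does not change the verdict that the argument matches the paper's.
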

\begin{proof}
  Consider~$x \in V_\sigma$ and~$y \in V_{\tau}$ which we are trying to order.
  Using \cref{lem:ordering-approx}, we can assume that their indices differ by at least~2.
  We first consider the orientation of the edge between $x$ and $y$.
  By \cref{lem:edge-direction-type}, it only depends on~$\rho$, the types~$\sigma,\tau$,
  and on which of~$x$ and $y$ has the smaller index.
  It is simple to check that this leaves only four possibilities:
  \begin{itemize}
    \item Either the edge is always oriented from the smaller index to the larger one,
      or symmetrically from the larger to the smaller.
      These are the simple cases: as in \cref{lem:ordering-type},
      we immediately obtain the ordering of~$x$ and $y$ from the edge between them.
    \item Otherwise, the edge is always oriented from~$x$ to~$y$, \ie
      from the type~$\sigma$ to the type~$\tau$, or vice versa.
  \end{itemize}

  Let us thus assume that we are in the second case, and
  without loss of generality, for~$\type(x) = \sigma$ and~$\type(y) = \tau$,
  the edge is always oriented as $x \to y$, unless their indices differ by less than~2.
  On the other hand, since~$V_\sigma$ and~$V_{\tau}$ are not homogeneous,
  there must exist an edge~$u \from v$ with~$\type(u) = \sigma$ and~$\type(v) = \tau$.
  We call such an edge from~$V_\tau$ to~$V_\sigma$ a \emph{backward edge}.

  Call~$V_{i,\sigma} = \sigma(B_i)$ the set of vertices with type~$\sigma$ in~$B_i$.
  Recall that the abstraction of~$B_i$ indicates whether or not~$V_{i,\sigma}$ and~$V_{i,\tau}$ are homogenous,
  and, when they are, the direction of edges between them.
  Thus this information does not depend on~$i$, since all~$B_i$ have the same abstraction.
  Similarly, using the definition of types and product of bags, one may check that
  the direction of edges between~$V_{i,\sigma}$ and~$V_{i+1,\tau}$ (which are always homogeneous)
  does not depend on~$i$, and idem with~$V_{i-1,\tau}$.

  It follows that if the backward edge~$u \from v$ is for instance
  between~$u \in V_{k,\sigma}$ and~$v \in V_{k+1,\tau}$,
  then there is also a backward edge between~$V_{i,\sigma}$ and~$V_{i+1,\tau}$ for all~$i$.
  In all cases, we obtain that for all~$i$, there is a backward edge between~$V_{i,\sigma}$
  and one of $V_{i-1,\tau},V_{i,\tau},V_{i+1,\tau}$.

  We are now able to order~$x$ and $y$ as follows.  Say that~$x \in V_{i,\sigma}$.
  Using \cref{lem:ordering-type}, we can identify the set~$V_{i,\sigma}$.
  Then, we can find some backward edge~$u' \from v'$ with~$u' \in V_{i,\sigma}$ and~$v'$ of type~$\tau$.
  By the previous arguments, such a backward edge must exist, and necessarily $\idx(v') \in \{i-1,i,i+1\}$.
  Finally, we compare~$v'$ and~$y$ which are both of type~$\tau$ using \cref{lem:ordering-type}.
  Since the index of~$y$ is by assumption less than~$i-1$ or more than~$i+1$,
  it does not matter that the index of~$v'$ is only known up to plus or minus~1.
\end{proof}

Once we can order vertices of two non-homogeneous types, it is simple to propagate the ordering to a connected component of~$G^\syn$.

\begin{lemma}\label{lem:ordering-types-sync}
  Let~$\Theta \subset \Gamma$ be a connected component of the synchronisation graph~$G$,
  and denote~$V_\Theta = \bigcup_{\tau \in \Theta} V_\tau$.
  Then~$V_\Theta$ can be ordered according to~$\le$ by an \fo transduction depending only on~$k,\alpha,\Theta$.
\end{lemma}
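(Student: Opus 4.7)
The plan is to chain the pairwise orderings of \cref{lem:ordering-types-heterogeneous} along paths in the synchronisation graph~$G^\syn$. First, I would apply the Parallel Application Lemma (\cref{lem:parallel-transduction}) to the transduction of \cref{lem:ordering-types-heterogeneous}, running it in parallel on every edge $(\alpha,\beta)$ of $G^\syn$ whose endpoints lie in~$\Theta$. Since the number of such pairs is bounded by a function of~$k$, this produces, within a single \fo transduction depending only on~$k$, $\tau$, and~$\Theta$, a family of quasi-orderings $\qle_{\alpha,\beta}$, each agreeing with $\qle$ on $V_\alpha \cup V_\beta$. Then, since~$\Theta$ is connected in~$G^\syn$, for every ordered pair of types $(\alpha,\beta) \in \Theta^2$ I fix once and for all a canonical path $\alpha = \alpha_0,\alpha_1,\dots,\alpha_m = \beta$ of length $m \le |\Theta|-1$ inside the subgraph induced by~$\Theta$. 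Because $|\Theta|$ is bounded by the total number of vertex types, which itself depends only on~$k$, these path lengths are bounded by a constant.

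With these paths in hand, I would define $\qle$ on $V_\Theta$ by a single \fo interpretation. The colouring guessing vertex types (already available from earlier in the proof) identifies $\alpha$ and $\beta$ for any~$x \in V_\alpha$ and~$y \in V_\beta$, so the formula can branch on these types and select the corresponding canonical path. We then declare $x \qle y$ iff there exist intermediate vertices $z_1 \in V_{\alpha_1},\dots,z_{m-1} \in V_{\alpha_{m-1}}$ satisfying
\[ x \qle_{\alpha_0,\alpha_1} z_1 \qle_{\alpha_1,\alpha_2} z_2 \qle_{\alpha_2,\alpha_3} \cdots \qle_{\alpha_{m-1},\alpha_m} y. \]
This is an \fo formula with at most $|\Theta|-1$ existential quantifiers, hence of bounded quantifier depth. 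The degenerate case $\alpha = \beta$ (path of length~$0$) falls back on \cref{lem:ordering-type} via the same mechanism.

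For correctness, one direction is immediate: if the chain of $z_i$ exists, then by transitivity of $\qle$, using that each $\qle_{\alpha_i,\alpha_{i+1}}$ agrees with the true $\qle$, we obtain $x \qle y$. Conversely, if $x \qle y$, I produce the chain by choosing $z_i \in V_{\alpha_i}$ whose index interpolates linearly between $\idx(x)$ and $\idx(y)$. Such $z_i$ exist because each inhabited type occurs in every bag $B_j$, an immediate consequence of the $B_j$ all sharing the idempotent type~$\tau$.

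The main obstacle I anticipate is the handling of pairs $x,y$ of equal index (equivalent under $\qle$): the chain definition must treat $x \qle y$ and $y \qle x$ symmetrically so that the output is a genuine quasi-ordering. This should follow from the observation that each $\qle_{\alpha_i,\alpha_{i+1}}$ is a quasi-ordering agreeing with $\qle$ on $V_{\alpha_i} \cup V_{\alpha_{i+1}}$, so any two vertices of equal index are equivalent in every pairwise ordering simultaneously, and the chain relation thus degenerates consistently on both sides. A related bookkeeping point is to ensure the resulting formula, together with the guessed type colouring, genuinely yields an extension \fo transduction with signature $\{E\} \to \{E,\qle\}$, which follows by composing the parallel application step with the final interpretation described above.
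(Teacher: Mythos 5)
Your plan is essentially the paper's: chain the pairwise transductions of \cref{lem:ordering-types-heterogeneous} along a path in~$G^\syn$. The one structural difference is in the formula: the paper uses the same-bag equivalences $\sim_i$ for the intermediate hops and a single $\qle$ at the very end (i.e.\ it picks all the intermediate witnesses $x_1,\dots,x_{h-1}$ in the \emph{same} bag as~$x$), whereas you chain the inequalities $\qle_{\alpha_i,\alpha_{i+1}}$ directly, interpolating the index between $\idx(x)$ and $\idx(y)$. Both variants are correct and have the same quantifier count, and your correctness argument — one direction by transitivity, the other by choosing witnesses of each type with index in $[\idx(x),\idx(y)]$, which exist because every inhabited type occurs in every bag — is sound.

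One concrete misstep: the Parallel Application Lemma (\cref{lem:parallel-transduction}) is the wrong tool for collecting the family $\qle_{\alpha,\beta}$. That lemma applies a single transduction independently to \emph{disjoint} substructures presented as a disjoint union with the $\thicksim$ relation; here the sets $V_\alpha \cup V_\beta$ for different pairs $(\alpha,\beta)\in\Theta^2$ share vertices whenever the pairs share a type, so they are not disjoint, and the lemma as stated does not apply. The correct mechanism is sequential composition: there are at most $|\Theta|^2$ relevant pairs, a constant depending only on~$k$ and~$\tau$, so one composes that many instances of the transduction from \cref{lem:ordering-types-heterogeneous}, each producing a fresh binary relation $\qle_{\alpha,\beta}$ while treating the relations already produced as inert extra predicates. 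This is what the paper tacitly does when it writes ``combining all these transductions''. The rest of your argument goes through once this substitution is made.
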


\begin{proof}
Say, without loss of generality, we want to compare vertices $x$ and $y$ of types $\sigma$ and $\tau$, respectively, where $\sigma$ and $\tau$ belong to the same connected component $\Theta$. As $\sigma,\tau \in \Theta$, there exists a path
\[ P: \sigma = \gamma_0, \gamma_1, \dots, \gamma_h = \tau \]
connecting them in the synchronisation graph.

For each pair~$\gamma_i,\gamma_{i+1}$, \cref{lem:ordering-types-heterogeneous} gives a transduction that orders~$V_{\gamma_i} \cup V_{\gamma_{i+1}}$.
Combining all these transductions, we may assume we have access to a relation~$\qle_i$
which coincides with~$\qle$ on any pair~$x,y$ with~$x \in V_{\gamma_i}$ and $y \in V_{\gamma_{i+1}}$, or vice versa.
Then we can also define $x \sim_i y$ as $x \qle_i y \land y \qle_i x$ to check that~$x,y$ are in the same bag under the same assumption on types.
Recall also that we assume to have already guessed the type of each vertex with colouring,
and let us write~$\gamma_i(x)$ for the predicate checking that~$x$ has type~$\gamma_i$.

Now we can order $x \in V_\sigma$ and $y \in V_\tau$ by the formula:
\begin{align*}
  \exists x_1,\dots,x_{h-1},\quad &
  \gamma_1(x_1) \land \dots \land \gamma_{h-1}(x_{h-1}) \\
  \land\ & x \sim_0 x_1 \land x_1 \sim_1 x_2 \land \dots \land x_{h-2} \sim_{h-2} x_{h-1} \\
  \land\ & x_{h-1} \qle_{h-1} y.
\end{align*}
Indeed, since each type~$\gamma_i$ is inhabited, and thus intersects every type,
there must be some~$x_i$ of type~$\gamma_i$ in the same bag as~$x$.
The first line of the formula checks that~$x_1,\dots,x_{h-1}$ have the correct types,
the second line checks that they are in the same bag as~$x$, and the last line compares~$x_{h-1}$ and~$y$,
which is equivalent to comparing~$x$ and~$y$.
\end{proof}

When~$G^\syn$ is connected, \cref{lem:ordering-types-sync} concludes the proof of \cref{lem:idempotent-partition}.
Note in that case that the given factorisation $B = B_1\cdot  \ldots \cdot B_n$ is not modified,
\ie the partition $\{X_1,\dots,X_m\}$ in the conclusion of \cref{lem:idempotent-partition} is exactly $\{B_1,\dots,B_n\}$,
and conditions~\eqref{item:fact-depth} and~\eqref{item:part-rank} of the statement are trivial.

Assume finally that~$G^\syn$ has several connected components that we enumerate as $\Theta_1,\dots,\Theta_r$.
We then reorganise the decomposition of~$B$.
For $i \in [n]$ and $t \in [r]$, call~$X_{t,i}$ the set of internal vertices of~$B_i$ with types in~$\Theta_t$.
By \cref{lem:ordering-types-sync}, for each $t \in [r]$, the set $V_{\Theta_t}$ can be ordered according to~$\qle$ with a transduction.
Let $\qle'$ be the quasi-ordering on the partition
$\{X_{t,i}\mid i \in [n], t \in [r]\}$ obtained by ordering first by type and then by bag, i.e.\
\[ X_{1,1} \qle' \dots \qle' X_{1,n} \qle' \dots \qle' X_{r,1} \qle' \dots \qle' X_{r,n}. \]

This quasi-ordering $\qle'$ has small cut-rank:
\begin{lemma}\label{lem:small-rank}
  For any $x \in V(T)$,
  \[ \rk_T(\{y : y \qle' x\}); \{y : y \qgt' x\}) \le k \cdot (2^k+1). \]
\end{lemma}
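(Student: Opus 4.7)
The plan is, for a fixed $x \in X_{t,i}$, to bound $\rk(Y;Z)$ where $Y = \{y : y \qle' x\}$ and $Z = \{y : y \qgt' x\}$, by splitting each side of the cut according to whether vertices lie in $V_{\Theta_t}$. Write $Y = Y_1 \cup Y_2$ with $Y_1 = \bigcup_{s<t} V_{\Theta_s}$ and $Y_2 = \bigcup_{j \le i} X_{t,j}$, and similarly $Z = Z_1 \cup Z_2$ with $Z_1 = \bigcup_{s>t} V_{\Theta_s}$ and $Z_2 = \bigcup_{j>i} X_{t,j}$. Since cut-rank is subadditive under splitting the adjacency matrix into row or column blocks, it suffices to bound each of $\rk(Y_1; Z)$, $\rk(Y_2; Z_1)$, and $\rk(Y_2; Z_2)$ separately.

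The key observation is that whenever two vertex types $\alpha, \beta$ lie in distinct connected components of the synchronisation graph $G^\syn$, $V_\alpha$ and $V_\beta$ are by the very definition of $G^\syn$ homogeneous, so every edge between them is determined by the ordered pair of types alone. Applied to the first piece: every type appearing in $Y_1$ lies in $\bigcup_{s<t}\Theta_s$, while every type in $Z$ lies in $\Theta_t \cup \bigcup_{s'>t}\Theta_{s'}$, all in components distinct from those of $Y_1$. Consequently, each row of the adjacency matrix from $Y_1$ to $Z$ depends only on the type of its indexing vertex, giving $\rk(Y_1; Z) \le \sum_{s<t}|\Theta_s|$. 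A symmetric argument on columns yields $\rk(Y_2; Z_1) \le \sum_{s>t}|\Theta_s|$.

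For the remaining piece I use the order-$k$ bag structure of $B = B_1 \cdots B_n$: by the definition of the bag product, every edge from a prefix vertex $v \in V(B_1\cdots B_i)$ to a suffix vertex $w \in V(B_{i+1}\cdots B_n)$ is determined by $w$ together with the colour of $v$ after $B_1\cdots B_i$, which takes one of $k$ values; this forces $\rk(V(B_1\cdots B_i); V(B_{i+1}\cdots B_n)) \le k$ and hence, by restriction, $\rk(Y_2; Z_2) \le k$. Combining the three bounds, and using that the total number of vertex types is at most $k \cdot 2^k$,
\[ \rk(Y;Z) \le \sum_{s \ne t}|\Theta_s| + k \le |\Gamma| + k \le k\cdot 2^k + k = k(2^k+1), \]
which is the desired bound. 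I do not foresee any real obstacle; the only subtle point is to carefully track which portions of $Y$ and $Z$ lie inside versus outside the single component $\Theta_t$, so that the homogeneity observation covers as much of the matrix as possible, leaving only the $\rk(Y_2;Z_2)$ piece to be handled by the bag-rank argument.
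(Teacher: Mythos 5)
Your proposal is correct and follows essentially the same decomposition as the paper: split $Y$ and $Z$ by whether vertices lie in $V_{\Theta_t}$, bound the two cross-component blocks by counting inhabited types via homogeneity, and bound the in-component block $\rk(Y_2;Z_2)\le k$ via the order-$k$ bag cut. Your accounting $\sum_{s\ne t}|\Theta_s|+k\le k\cdot 2^k+k$ is in fact a cleaner derivation than the paper's ad hoc assignment of $\tfrac{k\cdot 2^k}{2}$ to each of the two cross-component blocks, but the decomposition, the key homogeneity observation, and the final bound coincide.
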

\begin{proof}
  Any bipartition as in the statement can be written as $(X,Y)$
  with $X=\big(\bigcup_{t'<t} V_{\Theta_{t'}}\cup \bigcup_{j\leq i} X_{t,j}\big)$
  and $Y=\big(\bigcup_{t'>t} V_{\Theta_{t'}}\cup \bigcup_{j>i} X_{t,j}\big)$
  for some $t \in [r]$ and $i \in [n]$.
  Recall that the total number of types (for bags of order~$k$) is bounded by $k \cdot 2^k$.
  By definition of the synchronisation graph, between $\bigcup_{t'<t} V_{\Theta_{t'}}$ and $\bigcup_{t' \ge t} V_{\Theta_{t'}}$,
  the direction of edges only depend on the types of vertices.
  This implies
  \begin{align}
    \rk_T\left(\bigcup_{t'< t} V_{\Theta_{t'}}; \big(\bigcup_{t'> t} V_{\Theta_{t'}}\cup \bigcup_{j> i} X_{t,j}\big)\right)
    & \leq \frac{k\cdot 2^k}{2}, & \text{and similarly}\\
    \rk_T\left(\bigcup_{j\leq i} X_{t,j}; \bigcup_{t'> t} V_{\Theta_{t'}}\right) & \leq \frac{k\cdot 2^k}{2}.
  \end{align}
  On the other hand, since the bags have order~$k$, any prefix/suffix bipartition in the ordering of bags has rank at most~$k$
  (see \cref{lem:cw-rw,lem:monoid-nlcwidth}). Thus
  \begin{equation}
    \rk_T\left(\bigcup_{j\leq i} X_{t,j}; \bigcup_{j> i} X_{t,j}\right) \leq k.
  \end{equation}
  The three inequalities directly combine to prove $\rk(X;Y) \le k \cdot (2^k+1)$.
\end{proof}

We can now complete the proof of \cref{lem:idempotent-partition}, by checking the three conditions of the statement.
\begin{enumerate}
  \item Firstly, each~$T[X_{t,i}]$ is the internal tournament of a bag in $\mathfrak{C}_k$ of factorisation depth at most~$(p-1,q+2p)$.
    Indeed, this follows from \cref{lem:type-deletion} since $T[X_{t,i}]$ is the restriction of~$T[V(B_i)]$ to some union of types,
    and~$B_i$ is a bag in $\mathfrak{C}_k$ and has factorisation depth at most~$(p-1,q)$ by assumption.
  \item By \cref{lem:small-rank}, the quasi-ordering
    \[ X_{1,1} \qle' \dots \qle' X_{1,n} \qle' \dots \qle' X_{r,1} \qle' \dots \qle' X_{r,n}. \]
    has cut-rank at most~$k \cdot (2^k+1)$.
  \item Finally, there is an extension \fo transduction~$\Psi_k$ depending only on~$k$ such that
    for $T = (V,E)$ the internal tournament of~$B$, we have $(V,E,\qle') \in \Psi_k(T)$.
    Indeed, from \cref{lem:ordering-types-sync}, we can obtain the restriction of~$\qle'$ to any~$V_{\Theta_t}$ by a transduction depending only on~$k,\alpha,\Theta$.
    By first guessing the type of each vertex, ordering by types between the different sets $V_{\Theta_1}, \dots, V_{\Theta_r}$,
    and by the previous transductions inside each~$V_{\Theta_t}$, we obtain the desired quasi-ordering~$\qle'$.

    This transduction depends on~$k$, the abstraction~$\alpha$ of bags, and the synchronisation graph~$G^\syn$.
    But, as argued at the start of \cref{subsec:main-lemma}, there are only finitely many possible choices for~$\alpha$ and for~$G^\syn$ when~$k$ is fixed,
    thus the transduction~$\Psi_k$ may first guess~$\alpha$ and~$G^\syn$, and then proceed as above.
\end{enumerate}

This completes the proof of \cref{lem:idempotent-partition}, and with it \cref{thm:transduce-ordering}.

\section{Definable decompositions and definable properties}\label{sec:definable-decomp}
\Cref{thm:transduce-ordering} proved that in tournaments of bounded linear clique-width, one can \fo-transduce orderings of bounded cut-rank.
We now show in \cref{sec:transduce-decomp} that from this ordering, one can also transduce a linear clique decomposition, proving our main result, \cref{thm:main}.
Then, in \cref{sec:emso}, we combine this with some of the central ideas from Courcelle's work~\cite{CourcelleE2012}
to obtain that the logics \mso and \emso are equivalent over bounded linear clique-width tournaments (\cref{thm:emso}).

\subsection{FO-transducing linear decompositions}\label{sec:transduce-decomp}
Let us first explain how we represent a linear decomposition as a relational structure.
We choose a representation based on the monoid~$\mathfrak{C}_k$ from \cref{Section: Linear decompositions and semigroups},
but it would be simple to obtain the same results with a representation based on the operations defining linear clique-width or linear NLC-width.

We use the following standard encoding of words as relational structures:
for a finite alphabet~$A$, a word~$w_1,\dots,w_n$ in~$A^*$ is represented by the ordered structure with
vertex set~$\{1,\dots,n\}$, one binary relation~$<$ for the natural ordering on~$\{1,\dots,n\}$,
and for each letter $a \in A$, a unary relation~$a(x)$ such that~$a(i)$ holds if and only if $w_i = a$.
We call this the \emph{unary representation} of the word~$w_1,\dots,w_n$.

Now for any~$k \in \Nn$, let $\mathfrak{A}_k$ denote the set of atomic bags of order~$k$ (up to isomorphism).
Recall that~$\mathfrak{A}_k$ by definition generates the monoid~$\mathfrak{C}_k$,
and that linear NLC decompositions are equivalent, in the sense of \cref{lem:monoid-nlcwidth}
to words $B_1,\dots,B_n$ with each $B_i \in \mathfrak{A}_k$.
We then represent the linear decomposition~$B_1,\dots,B_n$ as a word over the alphabet~$\mathfrak{A}_k$, in unary representation.

Firstly, from a linear decomposition, one can reconstruct the corresponding graph with a transduction.
\begin{lemma}\label{lem:decomp-to-graph}
 For any~$k$, one can compute an \fo transduction~$\Phi_k$ which given a linear decomposition $B_1,\dots,B_n$ with $B_i \in \mathfrak{A}_k$, outputs the underlying graph of $B_1 \cdot \ldots \cdot B_n$.
\end{lemma}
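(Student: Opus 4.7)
The input encodes the word $B_1, \dots, B_n$ over the finite alphabet $\mathfrak{A}_k$ in unary: positions form the universe, with the linear order~$<$ and one unary predicate per atomic bag. Each atomic bag is either a vertex-addition $B(X,c)$, contributing a single internal vertex, or a pure relabelling $B(\rho)$, contributing none. The plan is first to restrict the universe to the vertex-addition positions, an \fo-definable condition on the unary predicates, which puts the universe in bijection with the internal vertices of $B_1 \cdots B_n$. It then remains to define the edge relation by an \fo interpretation on a colouring-enriched version of the input.

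Unfolding the definition of the bag product, for two vertex-addition positions $i<j$ with $B_i = B(X_i,c_i)$ and $B_j = B(X_j,c_j)$, the edge $v_i \to v_j$ is present in $B_1 \cdots B_n$ if and only if $c_{ij} \in X_j$, where
\[
c_{ij} \;=\; \bigl(\rho_{B_{j-1}} \circ \cdots \circ \rho_{B_{i+1}}\bigr)(c_i)
\]
is the colour of $v_i$ just before $B_j$ is multiplied in. Determining $c_{ij}$ is a regular property of the subword $B_{i+1} \cdots B_{j-1}$: it is the value at $c_i$ of the composition computed by a finite-state automaton whose state space is the monoid $T_k$ of functions $[k] \to [k]$, with transitions updating a state $\sigma$ to $\rho_{B_\ell} \circ \sigma$ on reading $B_\ell$. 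Hence the edge relation is \mso-definable from the word, and the remaining task is to realise it as an \fo transduction.

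For this I would use the standard guess-and-verify trick that turns a finite-state computation on a linearly ordered structure into an \fo transduction. A non-deterministic colouring labels each position~$\ell$ with a guess for the cumulative recolouring $F_\ell = \rho_{B_\ell} \circ \cdots \circ \rho_{B_1} \in T_k$, and a filtering step enforces $F_1 = \rho_{B_1}$ and $F_\ell = \rho_{B_\ell} \circ F_{\ell-1}$ by an \fo formula using the successor relation, which is \fo-definable from~$<$. The main technical obstacle to overcome is that $F_{i-1}$ and $F_{j-1}$ alone do not determine $c_{ij}$ in general: $F_{i-1}$ need not be surjective, and $c_i$ may lie outside its image, in which case the equation $\pi_{ij} \circ F_{i-1} = F_{j-1}$ leaves $\pi_{ij}(c_i)$ under-determined. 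I would resolve this by enriching the colouring with auxiliary tracks following the ``fresh'' colours introduced at each vertex-addition event, most conveniently implemented via $k$-copying so that a colour-tracking automaton is run in parallel for each possible initial colour. Once this enriched colouring is in place, $c_{ij}$, and therefore the edge $v_i \to v_j$, is expressible by a single \fo formula on the enriched structure, completing the transduction.
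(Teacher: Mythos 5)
Your high-level plan matches the paper's: delete the pure-relabelling positions, and then for two vertex-addition positions $i<j$ decide the direction of the edge $v_iv_j$ from the initial colour $c_i$, the interval recolouring $\pi_{ij} \eqdef \rho_{B_{j-1}}\circ\cdots\circ\rho_{B_{i+1}}$, and the boundary data of $B_j$. You are also right that the crux is computing $\pi_{ij}(c_i)$ by an \fo transduction, and right that guessing the prefix products $F_\ell$ is not enough, because the monoid of maps $[k]\to[k]$ is not a group and $c_i$ may fall outside $\mathrm{im}(F_i)$.

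The gap is in the patch. Running ``one colour-tracking automaton per initial colour'' (via $k$-copying) cannot supply the missing information, because the quantity you need to reconstruct is indexed by the \emph{starting position} $i$, not by the \emph{starting colour}. Two vertices introduced at different positions $i<i'$ with the same initial colour $c$ will generally have different current colours $\pi_{i\ell}(c)\neq\pi_{i'\ell}(c)$ at a later position $\ell$, so a bounded family of tracks keyed by $c\in[k]$ cannot follow all of them. Moreover, re-basing the prefix product at any single fixed position reproduces exactly the same non-surjectivity problem. The obstruction is the standard one: the transformation monoid $[k]\to[k]$ is not aperiodic (it contains non-trivial permutation groups), so \fo on the raw word cannot compute interval products, and no obvious local guess repairs this. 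What does repair it is the \fo-definability of all \emph{interval products} from a guessed factorisation forest, which is Colcombet's result and is exactly \cref{lem:simon-transduction} in the paper: the transduction non-deterministically colours with a bounded-height split of the word, from which every $\rho_{B_{j-1}}\circ\cdots\circ\rho_{B_{i+1}}$ becomes \fo-expressible, and then filters to verify the split. Your proof plan would be complete if you replaced the ``auxiliary tracks'' step by an appeal to this lemma (or, equivalently, reproved it); as written, the plan names the right target but does not reach it.
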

\begin{proof}
  If~$\Phi_k$ is allowed to be an \mso interpretation instead of an \fo transduction,
  then \cref{lem:decomp-to-graph} is an elementary result, see \cite[Proposition~7.30]{CourcelleE2012}.
  Furthermore, Colcombet \cite[Corollary~1]{colcombet2007deterministic} proved that
  any \mso interpretation defined on labelled trees (encoded with the ancestor relation) is equivalent to an \fo transduction.
  Unary representations of words---and thus in particular linear decompositions---are a special case of trees,
  where the left-to-right ordering corresponds to the ancestor relation.
  Thus Colcombet's theorem proves that the map~$\Phi_k$ can also be obtained as \fo interpretation.
\end{proof}

\transducedecomp*
\begin{proof}
  Assuming~$T$ is a tournament with linear clique-width at most~$k$ (and thus also linear NLC-width at most $k$),
  apply \cref{thm:transduce-ordering} to obtain an ordering~$<$ of~$V(T)$ with cut-rank at most some~$k''$ function of~$k$.
  Enumerate the vertices of~$T$ as $v_1 < \dots < v_n$.
  By \cref{lem:cw-rw,lem:monoid-nlcwidth}, there is a linear decomposition $B_1,\dots,B_m$ of~$G$,
  with $B_i \in \mathfrak{A}_{k'}$ for $k' \eqdef 2^{k''}+1$, and whose vertex introduction ordering is~$<$.
  Without loss of generality, each~$B_i$ has (exactly) one internal vertex.
  Indeed, if~$B_i$ has no internal vertex, it can simply be combined with~$B_{i+1}$ or~$B_{i-1}$.
  Thus $m=n$, and the internal vertex of~$B_i$ is~$v_i$.

  Now we already have vertices $\{v_1,\dots,v_n\}$, and the ordering~$<$ from \cref{thm:transduce-ordering},
  hence~$\Phi_k(T)$ can obtain the unary encoding of $B_1,\dots,B_n$ by guessing the unary relations encoding this word with a non-deterministic colouring.
  It only remains to check that this decomposition is valid, i.e.\ that the previous guess was correct.
  To this end, apply \cref{lem:decomp-to-graph} to obtain the graph corresponding to the decomposition $B_1,\dots,B_n$, and check that this graph coincides with~$T$ given as input.
  This last check ensures that condition~(2) from the statement holds.
\end{proof}

\subsection{Application to MSO-definable properties}\label{sec:emso}
Applications of transductions between graphs and tree- or clique-decompositions in Courcelle's work
typically involve the \emph{Backwards Translation Theorem} \cite[Theorem~1.40]{CourcelleE2012},
which states that for any \mso transduction~$\Phi$ with signature $\Gamma \to \Delta$,
and any \mso formula~$\psi$ on~$\Delta$, there is an \mso formula $\psi \circ \Phi$ such that
\[ G \models \psi \circ \Phi \quad \iff \quad \exists H \in \Phi(G),\ H \models \psi. \]
We will use a similar statement to prove \cref{thm:emso} from \cref{thm:main}.

Backwards translation is easily adapted to \cmso formulas and transductions,
as well as \fo formulas and \emph{deterministic} \fo transductions.
On the other hand, \fo formulas are not sufficiently expressive to handle the non-deterministic colouring operation.
To this end, one should instead use \emph{existential \mso} (\emso) formulas, that is formulas of the form
\[ \exists X_1,\dots,X_n,\ \psi, \]
where the variables~$X_i$ range over subsets of vertices,
and~$\psi$ is an \fo formula that may use the predicates $x \in X_i$.
The proof remains essentially the same as in the \mso case.
\begin{lemma}[Backwards Translation for \fo Transductions]\label{lem:backward-translation-fo}
 For any \fo transduction~$\Phi$ with signature $\Gamma \to \Delta$,
 and any \emso formula~$\psi$ on~$\Delta$, there is an \emso formula $\psi \circ \Phi$ such that
 \[ G \models \psi \circ \Phi \quad \iff \quad \exists H \in \Phi(G),\ H \models \psi. \]
\end{lemma}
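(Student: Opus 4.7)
The plan is to prove the lemma by induction on the length of the decomposition of $\Phi$ into the five basic operations introduced in Section~\ref{subsection: interpretation and transductions}. Writing $\Phi = f \circ \Phi'$ where $f$ is one of copying, colouring, filtering, interpretation, or universe restriction, it suffices to construct $\psi \circ f$ for each such $f$, and then take $\psi \circ \Phi = (\psi \circ f) \circ \Phi'$, applying the inductive hypothesis to $\Phi'$ and the EMSO formula $\psi \circ f$. Hence the whole argument reduces to treating each basic operation separately.

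Four of the five cases proceed along the same lines as the classical MSO backward-translation proof. For an \emph{interpretation} defined by formulas $(\phi_R)_{R \in \Delta}$, the translation $\psi \circ \Phi$ is obtained by substituting each atomic predicate $R(\bar{x})$ appearing in $\psi$ with $\phi_R(\bar{x})$. For a \emph{universe restriction} by $\phi(x)$, one relativises all first-order quantifiers of $\psi$ to $\phi$, and adds to the existential set block the requirement $\forall y,\ (y \in X_i \to \phi(y))$ for each bound set variable $X_i$, ensuring that the guessed sets live inside the restricted universe. For \emph{filtering} by a sentence $\phi$, one simply takes $\phi \land \psi$. For a \emph{colouring} by unary predicates $C_1, \dots, C_c$, one existentially quantifies these predicates at the outermost level of $\psi$, which preserves the EMSO shape by merging the new existential block with the one already present in $\psi$; this is precisely the step for which FO is not sufficient, forcing the use of EMSO.

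The main obstacle is the \emph{$k$-copying} operation, whose output structure has universe $V(G) \times [k]$. The key idea is that a vertex $(v,i)$ of the output is faithfully described by a vertex $v \in V(G)$ together with a copy index $i \in [k]$, and since $k$ is fixed this extra datum can be recorded explicitly inside the formula. Concretely, for the FO part $\chi$ of $\psi = \exists X_1, \dots, X_m,\ \chi$, one produces by structural induction, for each tuple $(c_1, \dots, c_n) \in [k]^n$ of copy indices assigned to the free vertex variables, a translated formula $\chi_{c_1, \dots, c_n}$. A quantifier $\exists x,\ \chi'$ over the copied structure becomes a $k$-fold disjunction $\bigvee_{i=1}^k \exists x,\ \chi'_{\dots, i}$ ranging over the copy index assigned to $x$, and dually for $\forall$. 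Atomic predicates are decoded using the recorded copy indices: the relations inherited from $\Gamma$ are read off from $G$ at the appropriate copies, equalities become $x = y \wedge c_x = c_y$, and the fresh $k$-ary relation introduced by $k$-copying (which holds on $x_1, \dots, x_k$ iff they are the $k$ copies of a single vertex of $G$) translates to $x_1 = \dots = x_k \wedge \bigwedge_{i=1}^k (c_{x_i} = i)$. Each existentially quantified set variable $X_j$ is replaced by $k$ set variables $X_j^1, \dots, X_j^k$, and membership $x \in X_j$ becomes $x \in X_j^{c_x}$. The resulting formula has size exponential in the number of vertex variables of $\psi$ but remains EMSO, completing the induction.
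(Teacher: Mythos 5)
Your proof is correct and follows the same approach as the paper: reduce to the five basic transduction operations and translate each one separately, with the colouring step being the one that forces the passage from \fo to \emso. You additionally work out the $k$-copying case in full detail (case disjunction over copy indices, decoding each $\Delta$-predicate and equality through those indices, and splitting each set variable $X_j$ into $k$ variables $X_j^1,\dots,X_j^k$ with $x \in X_j$ replaced by $x \in X_j^{c_x}$), which the paper explicitly omits as ``tedious''; your treatment of it is correct, and the extra constraint $\forall y\,(y \in X_i \to \phi(y))$ you add in the universe-restriction case is harmless but unnecessary, since after relativisation no quantifier ever queries membership of a vertex outside the restricted universe.
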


\begin{proof}[Sketch of proof]
  It suffices to prove the result when taking~$\Phi$ to be each of the five operations constituting \fo transductions independently.
  \begin{description}
    \item[interpretation] If~$\Phi$ is an interpretation $\Gamma \to \Delta$,
      define $\psi \circ \Phi$ by replacing each occurrence of a relation $R(x_1,\dots,x_r)$ in~$\psi$ for $R \in \Delta$
      by the corresponding formula $\phi_R(x_1,\dots,x_r)$ from~$\Phi$.
    \item[colouring] If~$\Phi$ is a non-deterministic colouring adding unary predicates $C_1,\dots,C_c$,
      then define $\psi \circ \Phi$ as
      \[ \exists X_1,\dots,X_n,\ \psi, \]
      after replacing $C_i(x)$ with $x \in X_i$ inside~$\psi$.
    \item[filtering] If~$\Phi$ is a filtering that checks that the sentence~$\phi$ holds,
      then define~$\psi \circ \Phi$ as~$\psi \land \phi$.
    \item[universe restriction] If~$\Phi$ restricts the vertex set to those~$x$ satisfying~$\phi(x)$,
      then define $\psi \circ \Phi$ by replacing first-order quantifiers in~$\psi$ as follows:
      \begin{align*}
        \exists x.\ \theta(x) \quad & \text{becomes} \quad \exists x.\ \phi(x) \land \theta(x) & \text{and} \\
        \forall x.\ \theta(x) \quad & \text{becomes} \quad \forall x.\ \phi(x) \rightarrow \theta(x)
      \end{align*}
      The leading \emso quantifiers $\exists X_1,\dots,X_n$ need not be modified.
    \item[copying] When~$\Phi$ copies each vertex~$v$ into $(v,1),\dots,(v,k)$,
      defining $\psi \circ \Phi$ involves simulating a quantifier $\exists (v,i)$ over copies of vertices
      by a quantification $\exists v$, and a case disjunction over~$i$.
      As this case is far more tedious than the others, and none of our transductions use copying, we do not detail it further.
      \qedhere
  \end{description}
\end{proof}

The last tool we need to prove \cref{thm:emso} is the following well-known variant of Büchi--Elgot--Trakhtenbrot theorem.
\begin{theorem}[Büchi--Elgot--Trakhtenbrot \cite{Buchi1960,Elgot1961,Trakhtenbrot1962}]\label{thm:BET}
 For any finite alphabet~$A$ and language $L \subseteq A^*$, the following are equivalent:
 \begin{enumerate}
   \item $L$ is a regular language,
   \item $L$ is defined by an \mso sentence,
   \item $L$ is defined by an \cmso sentence,
   \item $L$ is defined by an \emso sentence.
 \end{enumerate}
\end{theorem}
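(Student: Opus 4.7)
The plan is to close a cycle of implications (1) $\Rightarrow$ (4) $\Rightarrow$ (2) $\Rightarrow$ (3) $\Rightarrow$ (1), where the inclusions EMSO $\subseteq$ MSO $\subseteq$ CMSO are immediate from the definitions, so the real work lies in (1) $\Rightarrow$ (4) and (3) $\Rightarrow$ (1).

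For (1) $\Rightarrow$ (4), I would encode an accepting run of a DFA by a tuple of monadic second-order variables. Fix a DFA $\mathcal{A} = (Q,q_0,\delta,F)$ recognising $L \subseteq A^*$, and introduce one second-order variable $X_q$ for each $q \in Q$, to be interpreted as the set of positions $i$ such that $\mathcal{A}$ is in state $q$ immediately after reading the $i$th letter. The conditions that the $X_q$ partition the positions, that the first transition uses $q_0$ and agrees with the letter read, that consecutive positions are linked by $\delta$, and that the last position lies in $\bigcup_{q \in F} X_q$, are all expressible by a first-order sentence $\psi$ over the signature enriched with the $X_q$ and the letter predicates. Then $\exists (X_q)_{q \in Q}\,\psi$ is the desired EMSO sentence.

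For (3) $\Rightarrow$ (1), I would translate CMSO formulas inductively into finite automata over the extended alphabet $A \times \{0,1\}^{r+s}$, where the Boolean coordinates encode an assignment of the free first- and second-order variables. Atomic formulas and Boolean combinations are handled by standard automaton constructions (product, complement via determinisation). Existential quantification over a second-order variable corresponds to projecting out one coordinate and redeterminising, and modular counting $\exists^{(i \bmod n)} x,\ \phi(x)$ is handled by augmenting the automaton's state with a counter modulo $n$ that increments whenever the position at which $\phi$ holds is read. Alternatively, one can observe that over a linearly ordered domain, modular counting on a set $Y$ is already MSO-expressible: guess a set $X \subseteq Y$ that alternates with period $n$ along the successor relation of $Y$, and check the position of the maximum of $Y$ in $X$; this reduces CMSO to MSO directly.

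The main obstacle is the CMSO-to-automata direction, specifically the bookkeeping needed to handle modular counting and to maintain the invariant that formulas with $r$ free first-order and $s$ free second-order variables correspond to automata on an alphabet with $r+s$ additional Boolean tracks; once this inductive setup is in place each case is routine. Since the cycle goes through (1) and automata are effectively closed under complement, no explicit complementation step is needed at the logical level, which is what makes the chain (1) $\Rightarrow$ (4) $\Rightarrow$ (2) $\Rightarrow$ (3) $\Rightarrow$ (1) close cleanly despite EMSO being a priori weaker than MSO.
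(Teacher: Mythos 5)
The paper states this result as folklore and offers no proof of its own, so there is nothing to compare against line by line; your proposal supplies the standard argument and is essentially correct. The cycle $(1)\Rightarrow(4)\Rightarrow(2)\Rightarrow(3)\Rightarrow(1)$ is the natural way to organise things: the Büchi encoding of an accepting run by monadic variables $X_q$ gives $(1)\Rightarrow(4)$, the syntactic inclusions $\emso\subseteq\mso\subseteq\cmso$ give $(4)\Rightarrow(2)\Rightarrow(3)$, and the formula-to-automaton induction over the extended alphabet $A\times\{0,1\}^{r+s}$, with a modular counter in the state to handle the counting quantifiers, gives $(3)\Rightarrow(1)$. Going around through (1) does indeed sidestep any need for complementation at the logical level, as you observe.

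One small imprecision worth fixing in the alternative argument you sketch for eliminating modular counting over a linear order: a single monadic set $X\subseteq Y$ can only encode a period-$2$ alternation. For a general modulus $n>2$ you would need to guess a partition of $Y$ into residue classes $X_0,\dots,X_{n-1}$ (or $\lceil\log_2 n\rceil$ sets encoding the residue in binary), assert that successive elements of $Y$ under the induced successor increment the residue modulo $n$, and read off the class of the maximum of $Y$. Since this paper's \cmso is defined with counting modulo~$2$ only, the single-set version happens to cover the case at hand, but as phrased with a general $n$ the claim is too strong. The automaton-based translation you give first handles all moduli correctly and should remain the primary argument; you might also note the (routine) convention needed for the empty word in $(1)\Rightarrow(4)$, since the run encoding has no positions there.
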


\emsoequiv*
\begin{proof}
  Take the transduction~$\Phi_k$ and $k' \in \Nn$ given by \cref{thm:main}.
  Applying backwards translation for \cmso to the transduction from linear decompositions to graphs given by \cref{lem:decomp-to-graph},
  we obtain a \cmso formula~$\phi'$ on linear decompositions of width at most~$k'$,
  such that for any linear decomposition~$D$ with width at most~$k'$ of a tournament~$T$,
  we have $D \models \phi'$ if and only if $T \models \phi$.
  Since linear decompositions are words, \cref{thm:BET} gives that~$\phi'$ is equivalent to some \emso formula~$\psi'$.

  Now apply backwards translation for \fo (\cref{lem:backward-translation-fo}) to~$\psi'$ and the transduction~$\Phi_k$ given by \cref{thm:main}.
  This yields an \emso formula~$\psi$ such that $T \models \psi$
  if and only if $D \models \psi'$ holds for some $D \in \Phi_k(T)$.

  Consider~$T$ a tournament with linear NLC-width at most~$k$.
  Assume that~$T \models \phi$, and consider a linear decomposition $D \in \Phi_k(T)$ of~$T$ with width at most~$k$, guaranteed by \cref{thm:main}.
  Then the choice of~$\phi'$, $\psi'$, and~$\psi$ directly gives
  \[ T \models \phi 
    \quad \iff \quad D \models \phi'
    \quad \iff \quad D \models \psi'
    \quad \implies \quad T \models \psi. \]
  Conversely, assume that $T \models \psi$, hence there is some $D \in \Phi_k(T)$ such that $D \models \psi'$.
  Then \cref{thm:main} guarantees that~$D$ is a linear decomposition of~$T$ with width at most~$k$,
  and we once again have the equivalence between $D \models \psi'$, $D \models \phi'$, and finally $T \models \phi$.
  Therefore~$\phi$ is equivalent to the \emso sentence~$\psi$ for any tournament~$T$ with linear NLC-width at most~$k$.
\end{proof}

\bibliographystyle{plainurl}
\bibliography{biblio}

\end{document}